\newcommand{\R}{{\mathbb R}}
\newtheorem{theorem}{Theorem}[section]
\newtheorem{lemma}{Lemma}[section]
\newtheorem{definition}{Definition}[section]
\newtheorem{proposition}{Proposition}[section]
\newtheorem{remark}{Remark}[section]
\begin{document}
\title{\bf Variable Fofana's Spaces and their Pre-dual
\thanks{The research was supported by the National Natural Science Foundation of China(12061069).}
}
\author{Fan Yang\thanks{ Author E-mail address: yf18209050612@163.com.},\quad Jiang Zhou\thanks{ Corresponding author E-mail address: zhoujiang@xju.edu.cn.}\\ \\[.5cm]
\small  College of Mathematics and System Sciences, Xinjiang University, Urumqi 830046\\
\small People's Republic of China
}
\date{}
\maketitle

{\bf Abstract:}{
\hyphenpenalty=5000
\tolerance=1000
In this paper, we introduce the variable Fofana's spaces $(L^{p(\cdot)},L^q)^\alpha (\mathbb{R}^n)$ where $1< p(\cdot)<\infty$ and $1\leq q,\alpha\leq\infty$, then show some properties and establish the pre-dual of those spaces which are contributed to prove the necessary conditions of fractional integral commutators' boundedness. As applications, the characterization of fractional integral operators and commutators on variable Fofana's spaces are discussed, which are new result even for the classical Fofana's spaces.

\par
{\bf Key Words:} Fofana's Spaces; Variable exponent; Fractional integral Operators; $\rm BMO$; Commutators\par
{\bf Mathematics Subject Classification(2010):} 42B20; 42B25; 42B35.

\baselineskip 15pt

\section{\hspace{-0.6cm}{\bf }~~Introduction \label{s1}}

The variabl Lebesgue spaces $L^{p(\cdot)}$ appeared in literature for the first time
already in 1931 by W. Orlicz[1]. The major study of this spaces was initiated by O. Kov\'{a}\v{c}ik and J. R\'{a}kosn\'{\i}k[2] in 1991, where basic properties such as Banach space, reflexivity, separability, uniform convexity, H$\rm\ddot{o}$lder inequalities and embeddings of type $L^p(\cdot)\hookrightarrow L^q(\cdot)$ were obtained in higher dimension Euclidean spaces. In 2001, X. Fan and D. Zhao[3] further studied of the results in [2]. In 2011, L. Diening et al.[4] summarized the properties of the variable Lebesgue spaces in a comprehensive way. Also there are recent many interesting and important papers appeared in variable Lebesgue spaces (see, [5], [6], [7] [8], [9])

In 1926, amalgam spaces $(L^1, \ell^2)(\R)$ and $(L^2, \ell^\infty)(\R)$ were first introduced by N. Wiener[10]. In 1975, F. Holland[11] conducted a systematic study on general amalgam spaces $(L^p, l^q)(\R^n)$. In 2012, the Wiener's amalgam spaces with variable exponent $(L^{p(x)},L_{\omega}^{q})(\R^n)$ and $(L_{\omega}^{p(x)},L_{\upsilon}^{q})(\R^n)$ were defined by \'{I}. Aydin and A. T. G\"{u}rkanli[12]. \par
Given $1\leq p,q\leq\infty$, amalgam spaces $(L^p, l^q)(\R^n)$ is equipped with the norm $\|f\|_{p,q}=\left\|\left\{\|f\chi_{\mathrm{I}_k}\|_{L^p(\R^n)}\right\}\right\|_{\ell^q(\R^n)}$, where $\mathrm{I}_k=\prod^n_{j=1}[k_j,(k_j+1))$, $k=(k_j)_{1\leq j\leq\infty}\in \mathbb{Z}^n$.

In fact, for any $r>0$, the dilation operator $St_{r}^{(p)}:f\mapsto r^{-\frac{n}{p}}f(r^{-1}\cdot)$ is isometric on $L^{p}(\R^n)$. However, amalgam spaces do not have this property. If $p\neq q$, there does not exist $\alpha$ such that $\sup_{r>0}\|St_{r}^{(\alpha)}(f)\|_{p,q}<\infty$, although $St_{r}^{(\alpha)}(f)\in(L^{p},\ell^q)(\R^n)$ for all $f\in(L^p,\ell^q)(\R^n)$, $r>0$ and $\alpha>0$. In 1988, I. Fofana compensated above shortcomings, in [13] introduced the Fofana's spaces $(L^p,\ell^s)^\alpha(\R^n)$, which consist of $f\in(L^p,\ell^q)(\R^n)$ and satisfying $\sup_{r>0}\|St_{r}^{(\alpha)}(f)\|_{p,q}<\infty$,
In addition a continuous form of the Fofana's spaces $(L^{p},L^{q})^\alpha(\mathbb{R}^n)$ as follow:
$$(L^{p},L^{q})^\alpha(\mathbb{R}^n)=\{f\in L_{loc}^{p}(\mathbb{R}^n):\|f\|_{(L^{p},L^{q})^\alpha(\mathbb{R}^n)}<\infty\}, 1\leq p,q,\alpha\leq\infty,$$
where
$$\|f\|_{(L^{p},L^{q})^\alpha(\mathbb{R}^n)}=\sup_{r>0}\bigg\||B(\cdot,r)|^{\frac{1}{\alpha}-\frac{1}{p}-\frac{1}{q}}\|f\chi_{B(\cdot,r)}\|_{L^p(\mathbb{R}^n)}\bigg\|_{L^q(\mathbb{R}^n)},$$
$B(x,r)=\{y\in\mathbb{R}^n:|y-x|<r\}$ and use the notation $|B|$ for the Lebesgue measure of ball $B$.\par

In 2019, the pre-dual of the Fofana's spaces introduced by H. G. Feichtinger and J. Feuto[14].
In 2022, the mixed-norm amalgam spaces and their pre-dual were introduced by H. Zhang and J. Zhou[15], and then the characterization of fractional integral operators and commutators on mixed-norm amalgam spaces were also obtained.

The fractional integral operators $I_{\gamma}$ are defined by
$$I_\gamma f(x)=\int_{\mathbb{R}^n}\frac{f(y)}{|x-y|^{n-\gamma}}dy,~~0<\gamma<n.$$

For a locally integrable function $b$, the commutators of fractional integral operators $[b,I_\gamma]$ are defined by
$$[b,I_\gamma]f(x):=b(x)I_\gamma f(x)-I_\gamma(bf)(x)=\int_{\mathbb{R}^n}\frac{(b(x)-b(y))f(y)}{|x-y|^{n-\gamma}}dy,~~0<\gamma<n.$$\par

The space $\rm BMO(\R^n)$ consists of all locally integrable functions $f$ such that
$$\|f\|_{BMO(\R^n)}:=\sup_{B}\frac{1}{|B|}\int_B|f(x)-f_B|dx<\infty,$$
where the supremum is taken over all balls $B\in \mathbb{R}^n$, $f_B$ is the mean of $f$ on $B$, $|B|$ denotes the Lebesgue measure
of $B$.\par

In 1970, E. M. Stein[18] showed that the fractional integrals operator $I_\gamma$ is bounded on Lebesgue spaces. In 2007, C. Capone et al.[19] discussed the boundedness of fractional integrals on variable Lebesgue spaces. In 1982, Chanillo[20] had initially introduced the commutator $[b, I_\gamma]$ with $b \in \rm BMO$ and proved the boundedness on Lebesgue spaces. And in 2010, Izuki[21] generalizes this result to the case of variable Lebesgue spaces.

In this paper, we introduce the variable Fofana's spaces $(L^{p(\cdot)},L^q)^\alpha (\mathbb{R}^n)$($1< p(\cdot)<\infty$, $1\leq q,\alpha\leq\infty$) and show some basic properties. Naturally, it will be a very interesting problem to ask whether we can establish the pre-dual of variable Fofana's spaces and then obtain the characterization of fractional integral operators and commutators on variable Fofana's spaces where $b\in \rm BMO(\mathbb{R}^n)$.\par

Firstly, we recall some standard notations and lemmas in variable Lebesgue spaces. Given an open set $E\subset\mathbb{R}^n$ and consider a measurable function $p(\cdot):E \rightarrow[1,\infty)$, $L^{p(\cdot)}(E)$ denotes the set of measurable functions $f$ on $E$ such that for
some $\lambda>0$,
$$\int_{E}\bigg(\frac{|f(x)|}{\lambda}\bigg)^{p(x)}dx<\infty,$$
this set becomes a Banach function space when equipped with the Luxemburg-Nakano norm\\
$$\|f\|_{L^{p(\cdot)}(E)}=\inf\bigg\{\lambda>0:\int_{E}\bigg(\frac{|f(x)|}{\lambda}\bigg)^{p(x)}dx\leq1\bigg\}.$$
These spaces are variable $L^p$ spaces, since they generalize the standard $L^p$
spaces: if $p(\cdot)=p$ is constant, then $L^{p(\cdot)}(E)$ is isometrically isomorphic to $L^p(E)$. (Here and below we write $p(\cdot)$ instead of $p$ to emphasize that the exponent is function and a constant.)\par
The spaces $L_{loc}^{p(\cdot)}(E)$ are defined by
\begin{center}
$L_{loc}^{p(\cdot)}(E):=\{f:f\in L^{p(\cdot)}(F)$ for all compact subsets $F\subset E\},$
\end{center}
 and define $\mathcal{P}(E)$ to be the set of $p(\cdot): E\rightarrow[1, \infty)$ such that
\begin{center}
$p_-={\rm ess}\inf\{p(x):x\in E\}>1,~~~~p^+={\rm ess}\sup\{p(x):x\in E\}<\infty.$
\end{center}\par
Denote $p'(x)=p(x)/(p(x)-1)$ and $\mathcal{B}(E)$ be the set of $p(\cdot)\in\mathcal{P}(E)$ such that the Hardy--Littlewood maximal operators $M$ are bounded on $L^{p(\cdot)}(E)$.\par

Throughout this paper, given a ball $B:=B(x,r)=\{y\in \R^n: |y-x|<r\}$ and we use the notation $|B|$ for the Lebesgue measure of ball $B$ and $\chi_B$ means the characteristic function for a measurable set $B\subset\R^n$, and $aB=B(x,ar)$, $a > 0$. A symbol C always means a positive constant independent of the main parameters and may change from one occurrence to another. $C'\sim C''$ means that $C'$ is equivalent to $C''$, that is, $C'\lesssim C''(C'\le CC'')$ and $C''\lesssim C'(C''\le CC')$.

In variable Lebesgue spaces, there are some important lemmas as follow.

\begin{lemma}[see\cite{6} Theorem 1.1]\label{l1.1}
Given an open set $E\subset\mathbb{R}^n$ and $p(\cdot)\in\mathcal{P}(E)$, suppose that $p(\cdot)$ satisfies
\begin{align}\label{1}
|p(x)-p(y)|\leq\frac{C}{-\log(|x-y|)},~~~~x,y\in E, |x-y|\leq1/2
\end{align}
and
\begin{align}\label{2}
|p(x)-p(y)|\leq\frac{C}{\log(|x|+e)},~~~~x,y\in E, |y|\geq|x|,
\end{align}
then $p(\cdot)\in\mathcal{B}(E)$, that is, the Hardy--Littlewood maximal operator $M$ is bounded on $L^{p(\cdot)}(E)$.
\end{lemma}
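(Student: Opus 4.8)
\textit{Proof proposal.} Lemma~\ref{l1.1} is the classical sufficiency criterion of Cruz-Uribe--Fiorenza--Neugebauer (also due to Diening), so the plan is to reproduce the standard argument. First I would reduce to $E=\mathbb{R}^n$: extend $p(\cdot)$ to all of $\mathbb{R}^n$ by a McShane-type truncation, e.g. $\widetilde p(x)=\min\{p^+,\max\{p_-,\inf_{y\in E}(p(y)+L|x-y|)\}\}$ with $L$ read off from the moduli in \eqref{1}--\eqref{2}; this preserves $1<p_-\le p^+<\infty$ together with conditions \eqref{1} and \eqref{2}. Since $M_E f\le(M_{\mathbb{R}^n}\widetilde f)\big|_{E}$ for the zero-extension $\widetilde f$ of $f$, and the Luxemburg norms of $f$ and $\widetilde f$ coincide, it suffices to prove the bound on $\mathbb{R}^n$. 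By homogeneity of $\|\cdot\|_{L^{p(\cdot)}}$, it is then enough to produce a constant $C$ with $\int_{\mathbb{R}^n}(Mf(x)/C)^{p(x)}\,dx\le1$ whenever $\int_{\mathbb{R}^n}|f(x)|^{p(x)}\,dx\le1$.

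The heart of the matter is a pointwise ``key estimate'': under \eqref{1}--\eqref{2} there are a constant $C$ and a fixed nonnegative weight $h\in L^1(\mathbb{R}^n)\cap L^\infty(\mathbb{R}^n)$, of the form $h(y)\sim(e+|y|)^{-n}$ dictated by the decay hypothesis \eqref{2}, such that for every $f$ with modular at most $1$ and every $x\in\mathbb{R}^n$,
\begin{equation*}
\big(Mf(x)\big)^{p(x)}\;\le\;C\Big(M\big(|f|^{\,p(\cdot)/p_-}\big)(x)\Big)^{p_-}+C\Big(M\big(h^{1/p_-}\big)(x)\Big)^{p_-}.
\end{equation*}
To prove this I would fix a ball $B=B(x,r)\ni x$, look at $\frac{1}{|B|}\int_B|f|$, and split into two regimes. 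When $r$ is small ($|B|\le1$), condition \eqref{1} bounds the oscillation of $p(\cdot)$ on $B$ by $C/\log(1/|B|)$, so $|B|^{\,p_-(B)-p_+(B)}$ stays bounded; combining this with Jensen's inequality at the exponent $p(x)/p_-\ge1$ and a decomposition of $f$ according to whether $|f(y)|$ exceeds a decaying threshold comparable to $h^{1/p_-}$ (which keeps the loss from replacing $|f(y)|^{p(x)/p_-}$ by $|f(y)|^{p(y)/p_-}$ under control, the leftover tail going into the $h$-term) gives $(\frac{1}{|B|}\int_B|f|)^{p(x)}\lesssim(\frac{1}{|B|}\int_B|f|^{p(\cdot)/p_-})^{p_-}+(\frac{1}{|B|}\int_B h^{1/p_-})^{p_-}$. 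When $r$ is large, $|B|^{-1}$ is small, \eqref{2} pins $p(\cdot)$ near its far-field value across $B$, and a comparison with the constant-exponent bound, the modular hypothesis, and the $L^{p_-}$-theory again yield control by the same two averages. Taking $\sup_{B\ni x}$ delivers the key estimate.

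It remains to integrate, and here the hypothesis $p_->1$ is essential: since $M$ is bounded on $L^{p_-}(\mathbb{R}^n)$,
\begin{equation*}
\int_{\mathbb{R}^n}\big(Mf(x)\big)^{p(x)}\,dx\;\lesssim\;\int_{\mathbb{R}^n}|f(x)|^{p(x)}\,dx+\int_{\mathbb{R}^n}h(x)\,dx\;\le\;1+\|h\|_{L^1}\;=:\;C_0.
\end{equation*}
Since $C^{p(x)}\ge C^{p_-}$ for $C\ge1$, this gives $\int_{\mathbb{R}^n}(Mf(x)/C)^{p(x)}\,dx\le C_0C^{-p_-}\le1$ once $C=\max\{1,C_0^{1/p_-}\}$, i.e. $\|Mf\|_{L^{p(\cdot)}}\le C$ for all $f$ with $\|f\|_{L^{p(\cdot)}}\le1$, and homogeneity extends the bound to all $f\in L^{p(\cdot)}(\mathbb{R}^n)$.

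The main obstacle is establishing the key estimate with genuinely integrable error terms: one must reconcile the purely local condition \eqref{1} with the global decay condition \eqref{2}, choose the level-set decomposition of $f$ so that the exponent-exchange losses are absorbed by a single $L^1\cap L^\infty$ weight rather than blowing up, and check that the power of $|B|$ never degenerates -- this is exactly where log-H\"older regularity (as opposed to mere uniform continuity) is used. An alternative I would keep in mind is Diening's local-to-global scheme: prove the modular inequality on each unit cube with a constant independent of the cube, then sum over a tiling of $\mathbb{R}^n$, with \eqref{2} ensuring the sum converges. I would also record in passing that \eqref{1}--\eqref{2} are inherited by $p'(\cdot)$, which will be convenient for the duality arguments later in the paper.
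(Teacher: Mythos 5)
The paper does not prove Lemma~\ref{l1.1} at all: it is quoted from \cite{6} as a known result, so there is no in-paper argument to compare against. Your outline correctly reconstructs the standard Cruz-Uribe--Fiorenza--Neugebauer/Diening proof that the citation points to -- reduction to the modular inequality, the pointwise key estimate with an $(e+|x|)^{-n}$-type integrable error weight coming from condition \eqref{2}, and integration using the boundedness of $M$ on $L^{p_-}$ with $p_->1$ -- so it is an accurate (if necessarily schematic) account of the intended proof.
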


\begin{lemma}[see\cite{2} Theorem 2.1]\label{l1.2}
Let $p(\cdot)\in\mathcal{P}(E)$. If $f\in L^{p(\cdot)}(E)$ and $g\in L^{p'(\cdot)}(E)$, then fg is integrable on $E\subset\R^n$ and
$$\int_{E}|f(x)g(x)|dx\leq r_p\|f\|_{L^{p(\cdot)}(E)}\|g\|_{L^{p'(\cdot)}(E)},$$
where
$$r_p=1+1/p_--1/p^+.$$
\end{lemma}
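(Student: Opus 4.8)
The plan is to deduce this variable-exponent H\"older inequality from the classical pointwise Young inequality, in the same spirit as the proof in the constant-exponent case; the only genuinely new input is the norm--modular relation in $L^{p(\cdot)}(E)$.

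First I would dispose of the trivial cases: if $\|f\|_{L^{p(\cdot)}(E)}=0$ then $f=0$ a.e.\ and there is nothing to prove, and symmetrically for $g$; if either norm is $+\infty$ the asserted inequality holds vacuously. Hence assume both norms are finite and positive and normalize, writing $F=f/\|f\|_{L^{p(\cdot)}(E)}$ and $G=g/\|g\|_{L^{p'(\cdot)}(E)}$, so that it suffices to show $\int_E|F(x)G(x)|\,dx\le r_p$. The key preliminary fact I would record is the modular estimate $\int_E|F(x)|^{p(x)}\,dx\le1$ together with $\int_E|G(x)|^{p'(x)}\,dx\le1$. This comes straight from the definition of the Luxemburg--Nakano norm: for every $\lambda>\|f\|_{L^{p(\cdot)}(E)}$ one has $\int_E(|f(x)|/\lambda)^{p(x)}\,dx\le1$, and letting $\lambda$ decrease to $\|f\|_{L^{p(\cdot)}(E)}$ the nonnegative integrands increase pointwise to $|F(x)|^{p(x)}$, so the monotone convergence theorem gives the claim; the same reasoning applies to $G$, using that $p(\cdot)\in\mathcal{P}(E)$ forces $1<p_-\le p(x)\le p^+<\infty$ and hence $1<p'(x)<\infty$ for a.e.\ $x\in E$.

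Next, for a.e.\ $x\in E$ I would apply Young's inequality $ab\le a^{p}/p+b^{p'}/p'$ with $p=p(x)$, $a=|F(x)|$, $b=|G(x)|$, obtaining
$$|F(x)G(x)|\le\frac{1}{p(x)}|F(x)|^{p(x)}+\frac{1}{p'(x)}|G(x)|^{p'(x)}.$$
Integrating over $E$ is legitimate because every term is nonnegative, and it already shows $FG\in L^1(E)$, hence $fg$ is integrable on $E$. Using the pointwise bounds $1/p(x)\le 1/p_-$ and $1/p'(x)=1-1/p(x)\le 1-1/p^+$ together with the two modular estimates above yields
$$\int_E|F(x)G(x)|\,dx\le\frac{1}{p_-}\int_E|F(x)|^{p(x)}\,dx+\Big(1-\frac{1}{p^+}\Big)\int_E|G(x)|^{p'(x)}\,dx\le\frac{1}{p_-}+1-\frac{1}{p^+}=r_p,$$
and rescaling by $\|f\|_{L^{p(\cdot)}(E)}\|g\|_{L^{p'(\cdot)}(E)}$ finishes the proof.

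I expect the only non-routine point to be the norm--modular estimate $\int_E|F(x)|^{p(x)}\,dx\le1$ in the second paragraph, since that is where the precise form of the Luxemburg norm and the assumption $p^+<\infty$ (which guarantees $p'(x)<\infty$ a.e.) genuinely enter; once it is in hand, the remainder is the classical Young-inequality computation, with the constants $1/p_-$ and $1-1/p^+$ merely bookkeeping the variability of the exponent.
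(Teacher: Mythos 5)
Your proof is correct and follows essentially the standard argument: the paper does not prove this lemma but cites it from Kov\'a\v{c}ik--R\'akosn\'{\i}k, whose Theorem 2.1 is proved in exactly this way (normalization, the norm--modular estimate via the Luxemburg infimum, pointwise Young's inequality with exponent $p(x)$, and the bounds $1/p(x)\le 1/p_-$, $1/p'(x)\le 1-1/p^+$ giving the constant $r_p$). Your handling of the one delicate point --- passing from $\int_E(|f|/\lambda)^{p(x)}dx\le1$ for $\lambda>\|f\|_{L^{p(\cdot)}(E)}$ to the same bound at $\lambda=\|f\|_{L^{p(\cdot)}(E)}$ by monotone convergence --- is valid since $\mathcal{P}(E)$ guarantees $p^+<\infty$, so nothing is missing.
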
\par
This inequality is named the generalized H\"{o}lder inequality with respect to the variable Lebesgue spaces.

\begin{lemma}[see\cite{4} Corollary 4.5.9]\label{l1.3}
Let $p(\cdot)\in\mathcal{P}(\mathbb{R}^n)$ satisfy conditions (1) and (2) in Lemma1.1. Then $\|\chi_Q\|_{L^{p(\cdot)}(\mathbb{R}^n)}\sim|Q|^{\frac{1}{p_Q}}$ for every cube(or ball) $Q\subset\mathbb{R}^n$. More concretely,\\
\begin{eqnarray*}
\|\chi_Q\|_{L^{p(\cdot)}(\mathbb{R}^n)}\sim
\begin{cases}
|Q|^{\frac{1}{p(x)}}      ~~~~~~if~|Q|\leq 2^n~and~x\in Q,\\
|Q|^{\frac{1}{p(\infty)}} ~~~~~if~|Q|\geq 1
\end{cases}
\end{eqnarray*}
for every cube (or ball) $Q\subset\mathbb{R}^n$, where $p(\infty)=\lim\limits_{x\rightarrow\infty}p(x)$ and $p_Q$ is the mean of $p$ on $Q$.
\end{lemma}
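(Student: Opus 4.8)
The plan is to trade the Luxemburg--Nakano norm for the modular. By definition, $\|\chi_Q\|_{L^{p(\cdot)}(\mathbb{R}^n)}=\inf\{\lambda>0:\int_Q\lambda^{-p(y)}\,dy\le 1\}$, and since $p_-\le p(y)\le p^+$, for every $c\ge 1$ one has the elementary bounds $\int_Q(c\lambda)^{-p(y)}\,dy\le c^{-p_-}\int_Q\lambda^{-p(y)}\,dy$ and $\int_Q(\lambda/c)^{-p(y)}\,dy\ge c^{\,p_-}\int_Q\lambda^{-p(y)}\,dy$. Consequently, if for a given cube (or ball) $Q$ one can produce a scale $\lambda_Q>0$ with $\int_Q\lambda_Q^{-p(y)}\,dy\sim 1$, then $\|\chi_Q\|_{L^{p(\cdot)}(\mathbb{R}^n)}\sim\lambda_Q$ follows automatically. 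The whole statement thus reduces to choosing $\lambda_Q$ and estimating this one integral in two size regimes.

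For small cubes I would let $|Q|\le 2^n$, fix any $x\in Q$, and take $\lambda_Q=|Q|^{1/p(x)}$, so that $\lambda_Q^{-p(y)}=|Q|^{-1}\exp\!\big(\tfrac{p(x)-p(y)}{p(x)}\log|Q|\big)$. Since $|x-y|\le\mathrm{diam}\,Q\lesssim|Q|^{1/n}$, the local log-H\"{o}lder condition (1) of Lemma \ref{l1.1}, together with $p_->1$ and $p^+<\infty$, bounds $|p(x)-p(y)|\,\big|\log|Q|\big|$ uniformly for $x,y\in Q$; hence the exponential factor is $\sim 1$ and $\int_Q\lambda_Q^{-p(y)}\,dy\sim|Q|^{-1}\cdot|Q|=1$. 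This gives $\|\chi_Q\|_{L^{p(\cdot)}(\mathbb{R}^n)}\sim|Q|^{1/p(x)}$, and the same estimate shows $|p_Q-p(x)|\lesssim 1/\big|\log|Q|\big|$, so $|Q|^{1/p(x)}\sim|Q|^{1/p_Q}$ as well.

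For large cubes I would let $|Q|\ge 1$ and take $\lambda_Q=|Q|^{1/p(\infty)}$, so the claim becomes $\int_Q\exp\!\big(\tfrac{p(\infty)-p(y)}{p(\infty)}\log|Q|\big)\,dy\sim|Q|$. Letting the variable of larger modulus in condition (2) of Lemma \ref{l1.1} tend to infinity gives $|p(y)-p(\infty)|\le C/\log(|y|+e)$. On $\{y\in Q:|y|\gtrsim|Q|^{1/n}\}$ this forces $\big|(p(\infty)-p(y))\log|Q|\big|\lesssim 1$, so the integrand is $\sim 1$ there; and since a ball about the origin of radius a small multiple of $|Q|^{1/n}$ covers at most half of $Q$, that set has measure $\sim|Q|$ and already delivers the lower bound $\gtrsim|Q|$ together with the bulk of the upper bound.

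What remains, and what I expect to be the main obstacle, is the part $Q\cap\{|y|\lesssim|Q|^{1/n}\}$ near the origin, where $p(\infty)-p(y)$ can be as large as $p^+-p_-$ and the integrand genuinely blows up. Here I would decompose into dyadic shells $\{2^{-j-1}|Q|^{1/n}\le|y|<2^{-j}|Q|^{1/n}\}$ and verify that the geometric decay $2^{-jn}|Q|$ of the shell measures dominates the growth $\exp\!\big(\tfrac{p(\infty)-p(y)}{p(\infty)}\log|Q|\big)\le|Q|^{Cn/(p(\infty)\log(2^{-j}|Q|^{1/n}+e))}$ on each shell, handling the innermost ball $\{|y|\lesssim|Q|^{\varepsilon}\}$ (of measure $\lesssim|Q|^{n\varepsilon}\ll|Q|$) by the crude bound $|Q|^{(p^+-p_-)/p(\infty)}$ on the integrand; balancing this polynomial blow-up against the smallness of the low-$|y|$ portion of a huge cube is the delicate point, and everything else is routine manipulation of the two log-H\"{o}lder conditions. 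Finally, the same decomposition gives $|p_Q-p(\infty)|\lesssim 1/\log|Q|$, hence $|Q|^{1/p(\infty)}\sim|Q|^{1/p_Q}$; since the two regimes also agree up to constants on the overlap $1\le|Q|\le 2^n$, one obtains the uniform relation $\|\chi_Q\|_{L^{p(\cdot)}(\mathbb{R}^n)}\sim|Q|^{1/p_Q}$ for every cube or ball. As the lemma is quoted from \cite{4}, this records only the route; full details are given there.
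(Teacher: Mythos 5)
The paper offers no proof of this lemma at all: it is imported verbatim from \cite{4} (Corollary 4.5.9), so there is nothing internal to compare you against. Your sketch follows the standard route for this result: reduce the norm to the modular via $\int_Q(c\lambda)^{-p(y)}\,dy\le c^{-p_-}\int_Q\lambda^{-p(y)}\,dy$ for $c\ge1$, use local log-H\"older continuity (condition (1)) to kill the factor $|Q|^{(p(x)-p(y))/p(x)}$ on small cubes, and use the decay condition (2) in the limiting form $|p(y)-p(\infty)|\le C/\log(|y|+e)$ on large cubes, with the portion of $Q$ near the origin handled by a dyadic decomposition. This is exactly how the result is proved in the literature, and you correctly identify the only genuinely delicate point.

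One concrete fix is needed at that delicate point. On the innermost ball you bound the integrand $|Q|^{(p(\infty)-p(y))/p(\infty)}$ by $|Q|^{(p^+-p_-)/p(\infty)}$; that bound is true but too crude, since $(p^+-p_-)/p(\infty)$ can exceed $1$ (e.g.\ $p_-$ near $1$, $p^+$ large, $p(\infty)$ moderate), in which case the product with the measure $|Q|^{n\varepsilon}$ of the innermost ball does not stay below $|Q|$ and the balancing you describe fails. The correct observation is that the exponent satisfies $(p(\infty)-p(y))/p(\infty)\le 1-p_-/p(\infty)<1$, with the gap $\delta:=p_-/p(\infty)>0$ depending only on $p_-$ and $p(\infty)$; choosing $\varepsilon<\delta/n$ then makes the innermost contribution $\lesssim|Q|^{n\varepsilon+1-\delta}\le|Q|$, and the dyadic shells between $|Q|^{\varepsilon}$ and $|Q|^{1/n}$ contribute $\lesssim|Q|$ exactly as you say. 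With that single replacement your outline closes; the remaining assertions ($|p_Q-p(x)|\lesssim1/|\log|Q||$ for small $Q$, $|p_Q-p(\infty)|\lesssim1/\log|Q|$ for large $Q$, and consistency on the overlap $1\le|Q|\le2^n$) follow from the same decompositions as you indicate.
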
\par

\begin{lemma}[see\cite{16} Lemma 2.9]\label{l1.4}
Suppose $p(\cdot)\in\mathcal{B}(\mathbb{R}^n)$. Then there exists constant $C>0$ such that for all balls B in $\mathbb{R}^n$,
$$\frac{1}{|B|}\|\chi_{B}\|_{L^{p(\cdot)}(\mathbb{R}^n)}\|\chi_{B}\|_{L^{p'(\cdot)}(\mathbb{R}^n)}\leq C.$$
\end{lemma}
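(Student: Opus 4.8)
The plan is to combine the associate-space (norm conjugate) formula for variable Lebesgue spaces with the assumed boundedness of the maximal operator $M$ on $L^{p(\cdot)}(\mathbb{R}^n)$. Fix a ball $B\subset\mathbb{R}^n$. First I would recall the converse to the generalized H\"older inequality of Lemma~\ref{l1.2}: there is a constant $C_0=C_0(p(\cdot))$ such that for every $h\in L^{p'(\cdot)}(\mathbb{R}^n)$,
$$\|h\|_{L^{p'(\cdot)}(\mathbb{R}^n)}\le C_0\sup\Big\{\int_{\mathbb{R}^n}|h(x)g(x)|\,dx:\ g\ge 0,\ \|g\|_{L^{p(\cdot)}(\mathbb{R}^n)}\le 1\Big\},$$
which is the standard norm conjugate formula (see e.g. \cite{2}, \cite{4}). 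Applying this with $h=\chi_B$ reduces the problem to bounding $\int_B g\,dx$ from above for an arbitrary nonnegative $g$ with $\|g\|_{L^{p(\cdot)}(\mathbb{R}^n)}\le 1$.

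Next I would bring in the maximal operator. For any $y\in B$ one has $\frac{1}{|B|}\int_B g\,dx\le Mg(y)$, simply because $B$ is one of the balls centred at $y$ competing in the supremum defining $Mg(y)$. Hence the constant $\frac{1}{|B|}\int_B g\,dx$ is pointwise dominated on $B$ by $Mg$, so multiplying by $\chi_B$ and taking $L^{p(\cdot)}$-norms gives
$$\frac{1}{|B|}\Big(\int_B g\,dx\Big)\,\|\chi_B\|_{L^{p(\cdot)}(\mathbb{R}^n)}\le \bigl\|(Mg)\chi_B\bigr\|_{L^{p(\cdot)}(\mathbb{R}^n)}\le \|Mg\|_{L^{p(\cdot)}(\mathbb{R}^n)}.$$
Since $p(\cdot)\in\mathcal{B}(\mathbb{R}^n)$, the right-hand side is at most $C\|g\|_{L^{p(\cdot)}(\mathbb{R}^n)}\le C$ with $C$ independent of $g$ and of $B$. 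Therefore $\int_B g\,dx\le C|B|/\|\chi_B\|_{L^{p(\cdot)}(\mathbb{R}^n)}$, and taking the supremum over all admissible $g$ together with the norm conjugate formula yields
$$\|\chi_B\|_{L^{p'(\cdot)}(\mathbb{R}^n)}\le \frac{C\,C_0\,|B|}{\|\chi_B\|_{L^{p(\cdot)}(\mathbb{R}^n)}},$$
which is exactly the claimed inequality after multiplying through by $\|\chi_B\|_{L^{p(\cdot)}(\mathbb{R}^n)}/|B|$.

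The only genuinely non-routine ingredient is the norm conjugate formula, i.e. the fact that the $L^{p'(\cdot)}$-norm can be recovered, up to a universal constant, by pairing against the unit ball of $L^{p(\cdot)}$; this is where the structure of the variable Lebesgue spaces and the condition $p(\cdot)\in\mathcal{P}(\mathbb{R}^n)$ (so that $1<p_-\le p^+<\infty$) are used, and it is what prevents one from merely invoking Lemma~\ref{l1.2}, which supplies only the ``easy'' direction. Everything else --- the pointwise bound by $Mg$ and the estimate $\|Mg\|_{L^{p(\cdot)}(\mathbb{R}^n)}\lesssim\|g\|_{L^{p(\cdot)}(\mathbb{R}^n)}$, which is precisely the hypothesis $p(\cdot)\in\mathcal{B}(\mathbb{R}^n)$ --- is elementary, and all constants produced are manifestly independent of $B$, as the statement requires. (Alternatively, one could first observe that $p(\cdot)\in\mathcal{B}(\mathbb{R}^n)$ forces $p'(\cdot)\in\mathcal{B}(\mathbb{R}^n)$ and run a symmetric argument, but this detour is unnecessary.)
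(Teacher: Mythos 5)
The paper itself offers no proof of this lemma: it is quoted verbatim from \cite{16} (Lemma 2.9), so there is nothing internal to compare against. Your argument is correct and is essentially the standard proof of that cited result: the norm conjugate formula reduces bounding $\|\chi_B\|_{L^{p'(\cdot)}(\mathbb{R}^n)}$ to bounding the averaging operator $g\mapsto \frac{\chi_B}{|B|}\int_B g\,dx$ on the unit ball of $L^{p(\cdot)}(\mathbb{R}^n)$, this average is pointwise dominated on $B$ by $Mg$ (up to a factor $2^n$ if $M$ is taken centered), and the hypothesis $p(\cdot)\in\mathcal{B}(\mathbb{R}^n)$ closes the estimate with constants independent of $B$, exactly as required.
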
\par


\section{\hspace{-0.6cm}{\bf }~~Variable Fofana's spaces $(L^{p(\cdot)},L^q)^\alpha(\mathbb{R}^n)$\label{s2}}

In this section, we introduce the definition and the properties of the $(L^{p(\cdot)},L^q)^\alpha(\mathbb{R}^n)$.\par
The variable amalgam spaces $(L^{p(\cdot)},L^{q})(\mathbb{R}^n)$ as follow:
$$(L^{p(\cdot)},L^{q})(\mathbb{R}^n):=\left\{f\in L_{loc}^{p(\cdot)}(\mathbb{R}^n):\|f\|_{(L^{p(\cdot)},L^{q})(\mathbb{R}^n)}:=\left\|\|f\chi_{B(\cdot,1)}\|_{L^{p(\cdot)}(\R^n)} \right\|_{L^{q}(\R^n)}<\infty\right\}.$$

\begin{definition}
Let $p(\cdot)\in \mathcal{P}(\R^n) $, $1\leq q, \alpha\leq\infty$. If $f\in L_{loc}^{p(\cdot)}(\mathbb{R}^n)$, then
\begin{align*}
(L^{p(\cdot)},L^{q})^\alpha(\mathbb{R}^n)=\{f:\|f\|_{(L^{p(\cdot)},L^{q})^\alpha(\mathbb{R}^n)}<\infty\},
\end{align*}
where
\begin{align*}
\|f\|_{(L^{p(\cdot)},L^q)^\alpha(\mathbb{R}^n)}
&=\sup_{r>0}\left\{\int_{\mathbb{R}^n}\left(|B(x,r)|^{\frac{1}{\alpha}-\frac{1}{p(x)}-\frac{1}{q}}\|f\chi_{B(\cdot,r)}\|_{L^{p(\cdot)}(\mathbb{R}^n)}\right)^qdx\right\}^{\frac{1}{q}}\\
&=\sup_{r>0}\left\||B(\cdot,r)|^{\frac{1}{\alpha}-\frac{1}{p(\cdot)}-\frac{1}{q}}\|f\chi_{B(\cdot,r)}\|_{L^{p(\cdot)}(\mathbb{R}^n)}\right\|_{L^q(\mathbb{R}^n)}.
\end{align*}
\end{definition}\par
Next, we claim some propositions.
\begin{proposition}\label{p2.1}
We can get some interesting facts about $(L^{p(\cdot)}, L^q)^\alpha(\mathbb{R}^n)$\rm:\par
{\rm (i)} For $p(\cdot):\R^n \rightarrow[1,\infty)$, $1\leq q, \alpha\leq\infty$. If $ p(\cdot)\leq\alpha\leq q$, then $(L^{p(\cdot)}, L^q)^\alpha(\mathbb{R}^n)\hookrightarrow(L^{p(\cdot)}, L^q)(\mathbb{R}^n)$.\par
{\rm(ii)} If $p(x)=\alpha$ and $q=\infty$, then $(L^{p(\cdot)}, L^q)^\alpha(\mathbb{R}^n)$ is just the
        usual variable $L^{\alpha}(\mathbb{R}^n)$ space .
\end{proposition}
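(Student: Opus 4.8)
The plan is to verify the two claims of Proposition \ref{p2.1} separately, each time reducing everything to the basic estimates on characteristic functions recorded in Lemmas \ref{l1.3} and \ref{l1.4}. For part (i), the goal is to show $\|f\|_{(L^{p(\cdot)},L^q)(\mathbb{R}^n)}\lesssim\|f\|_{(L^{p(\cdot)},L^q)^\alpha(\mathbb{R}^n)}$. Since the amalgam norm is obtained by taking $r=1$ in the supremum-free analogue of the Fofana norm, the natural first step is to compare, for a fixed ball $B(x,1)$, the weight $|B(x,1)|^{1/\alpha-1/p(x)-1/q}$ with the constant $1$; because $|B(x,1)|$ is a fixed constant depending only on $n$, this factor is harmless, so the amalgam norm is pointwise comparable to $\bigl\||B(\cdot,1)|^{1/\alpha-1/p(\cdot)-1/q}\|f\chi_{B(\cdot,1)}\|_{L^{p(\cdot)}}\bigr\|_{L^q}$, which is itself bounded by the supremum over all $r>0$, i.e. by $\|f\|_{(L^{p(\cdot)},L^q)^\alpha}$. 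The hypothesis $p(\cdot)\le\alpha\le q$ should enter only to guarantee that the exponent bookkeeping is consistent (in particular that $\tfrac1\alpha-\tfrac1{p(x)}-\tfrac1q$ has a definite sign or that the embedding is the ``expected'' direction), mirroring the classical inclusion $(L^p,L^q)^\alpha\hookrightarrow(L^p,L^q)$ of Fofana for $p\le\alpha\le q$; I would state and use this sign information explicitly.

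For part (ii), set $p(x)\equiv\alpha$ (a constant) and $q=\infty$. Then the inner $L^{p(\cdot)}$ norm becomes an ordinary $L^\alpha$ norm, and the weight exponent collapses: $\tfrac1\alpha-\tfrac1{p(x)}-\tfrac1q=\tfrac1\alpha-\tfrac1\alpha-0=0$, so the weight $|B(x,r)|^0=1$ disappears entirely. The Fofana norm therefore reduces to
\begin{align*}
\|f\|_{(L^{\alpha},L^\infty)^\alpha(\mathbb{R}^n)}=\sup_{r>0}\ \mathrm{ess\,sup}_{x\in\mathbb{R}^n}\ \|f\chi_{B(x,r)}\|_{L^\alpha(\mathbb{R}^n)}.
\end{align*}
The remaining task is to identify this quantity with $\|f\|_{L^\alpha(\mathbb{R}^n)}$. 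One inequality is immediate: $\|f\chi_{B(x,r)}\|_{L^\alpha}\le\|f\|_{L^\alpha}$ for every $x$ and $r$, so the supremum is at most $\|f\|_{L^\alpha}$. For the reverse inequality, I would fix $r$ and let $r\to\infty$ (or, equivalently, take an increasing sequence of balls $B(0,r_k)$ with $r_k\to\infty$ exhausting $\mathbb{R}^n$) and apply the monotone convergence theorem to $|f|^\alpha\chi_{B(0,r_k)}\uparrow|f|^\alpha$, which gives $\|f\chi_{B(0,r_k)}\|_{L^\alpha}\to\|f\|_{L^\alpha}$; hence the supremum over $r$ is at least $\|f\|_{L^\alpha}$. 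Combining the two inequalities yields equality of norms, so the two spaces coincide isometrically. (A small care point: when $q=\infty$ the essential supremum in $x$ must be handled correctly, but since $\|f\chi_{B(x,r)}\|_{L^\alpha}$ as a function of $x$ is continuous, the essential supremum equals the pointwise supremum, and the exhaustion argument at $x=0$ suffices.)

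The only genuine obstacle I anticipate is bookkeeping the variable-exponent factor in part (i): unlike the constant-exponent case, $|B(x,r)|^{1/\alpha-1/p(x)-1/q}$ has an $x$-dependent exponent, so to pass from the $L^{p(\cdot)}$-quasi-norm estimate to the $L^q$ integral one must be a little careful that the comparison constant is uniform in $x$. At $r=1$ this is a non-issue since $|B(x,1)|$ is a universal constant, but if one instead wants the full strength of the embedding one invokes Lemma \ref{l1.3}: for $|B|$ bounded, $\|f\chi_B\|_{L^{p(\cdot)}}$ is controlled through $|B|^{1/p(x)}$ with $x\in B$, and the product of the weight with this factor telescopes to a bounded quantity precisely under $p(\cdot)\le\alpha\le q$. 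Everything else is routine, and I would keep the write-up short, citing Lemmas \ref{l1.3} and \ref{l1.4} for the characteristic-function estimates and the monotone convergence theorem for part (ii).
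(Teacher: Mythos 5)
Your proof is correct and takes essentially the same route as the paper: part (i) is exactly the paper's argument (the amalgam norm is, up to a constant depending only on $n$ and the exponent bounds, the $r=1$ term of the Fofana norm, hence dominated by the supremum over $r>0$). For part (ii) the paper merely writes ``it is apparent,'' so your observation that the weight collapses to $1$ together with the monotone-convergence exhaustion by balls $B(0,r_k)$ correctly supplies the detail the paper omits.
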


\begin{proof}
By direct calculation, we have\par
(i) When $p(\cdot)\leq\alpha\leq q$,
\begin{align*}
\|f\|_{(L^{p(\cdot)},L^q)}
&\sim\left\||B(\cdot,1)|^{\frac{1}{\alpha}-\frac{1}{p(\cdot)}-\frac{1}{q}} \|f\chi_{B(\cdot,1)}\|_{L^{p(\cdot)}(\R^n)} \right\|_{L^{q}(\R^n)}\\
&\leq\sup_{r>0 }\left\||B(\cdot,r)|^{\frac{1}{\alpha}-\frac{1}{p(\cdot)}-\frac{1}{q}} \|f\chi_{B(\cdot,r)}\|_{L^{p(\cdot)}(\R^n)}\right\|_{L^{q}(\R^n)}\\
&=\|f\|_{(L^{p(\cdot)},L^{q})^{\alpha}(\R^n)}<\infty.
\end{align*}
Therefore, $(L^{p(\cdot)},L^{q})^\alpha(\R^n)\hookrightarrow(L^{p(\cdot)},L^{q})(\R^n)$ with $\|f\|_{(L^{p(\cdot)},L^{q})}\le\|f\|_{(L^{p(\cdot)},L^{q})^{\alpha}}$.\par
(ii) When $p(\cdot)=\alpha$ and $q=\infty$, it is apparent.
\end{proof}

\begin{proposition}\label{p2.2}
Let $p(\cdot)\in \mathcal{P}(\R^n), 1\leq q, \alpha\leq\infty$. Then $(L^{p(\cdot)}, L^q)^\alpha(\mathbb{R}^n)$ are Banach spaces.
\end{proposition}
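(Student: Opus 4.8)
The plan is to verify first that $\|\cdot\|_{(L^{p(\cdot)},L^q)^\alpha}$ is a genuine norm on the vector space $(L^{p(\cdot)},L^q)^\alpha(\mathbb{R}^n)$, and then to prove completeness by the standard route: show that convergence in this norm forces convergence in $L^{p(\cdot)}$ on every fixed ball, extract a pointwise-a.e.\ limit of a Cauchy sequence, and finally use the lower semicontinuity of the norm (a Fatou argument) to conclude that the limit lies in the space and that the original sequence converges to it.

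For the norm axioms, positive homogeneity is immediate from the homogeneity of $\|\cdot\|_{L^{p(\cdot)}}$ and $\|\cdot\|_{L^q}$. If $\|f\|_{(L^{p(\cdot)},L^q)^\alpha}=0$, then for every $r>0$ and a.e.\ $x$ one has $\|f\chi_{B(x,r)}\|_{L^{p(\cdot)}}=0$, so $f=0$ a.e.\ on $B(x,r)$; letting $x$ run over a countable dense set gives $f=0$ a.e. The triangle inequality follows by fixing $r>0$, using the subadditivity of the Luxemburg norm inside, pulling out the weight $|B(\cdot,r)|^{1/\alpha-1/p(\cdot)-1/q}$, applying Minkowski's inequality in $L^q$, and then taking the supremum over $r>0$ (with the obvious modification when $q=\infty$); this also shows the set is a vector space. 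Hence we have a normed space.

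The heart of the proof is a local embedding: for every fixed ball $B_0=B(x_0,\rho)$ there is a constant $C=C(n,\rho,p_-,p^+,q,\alpha)$ with $\|g\chi_{B_0}\|_{L^{p(\cdot)}(\mathbb{R}^n)}\le C\,\|g\|_{(L^{p(\cdot)},L^q)^\alpha(\mathbb{R}^n)}$ for all $g$ in the space. Indeed, for $x\in B_0$ we have $B_0\subset B(x,2\rho)$, so $\|g\chi_{B_0}\|_{L^{p(\cdot)}}\le\|g\chi_{B(x,2\rho)}\|_{L^{p(\cdot)}}$; since $|B(x,2\rho)|$ is independent of $x$ and $1/\alpha-1/p(x)-1/q$ stays in a bounded interval because $p(\cdot)\in\mathcal{P}(\mathbb{R}^n)$, the weight $|B(x,2\rho)|^{1/\alpha-1/p(x)-1/q}$ is bounded below by a positive constant on $B_0$; integrating the $q$-th power over $x\in B_0$ and comparing with the definition of the norm yields the claim (again with the evident change for $q=\infty$). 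Consequently, a Cauchy sequence $\{f_k\}$ in $(L^{p(\cdot)},L^q)^\alpha$ is Cauchy in $L^{p(\cdot)}(B_0)$ for every ball $B_0$; completeness of $L^{p(\cdot)}$, together with an exhaustion of $\mathbb{R}^n$ by the balls $B(0,N)$ and a diagonal argument, produces $f\in L^{p(\cdot)}_{loc}(\mathbb{R}^n)$ with $f_k\to f$ in $L^{p(\cdot)}(B_0)$ for all $B_0$, and, along a subsequence, $f_k\to f$ a.e.

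To finish, fix $\varepsilon>0$ and $K$ with $\|f_k-f_l\|_{(L^{p(\cdot)},L^q)^\alpha}<\varepsilon$ for $k,l\ge K$, and fix $r>0$. For a.e.\ $x$ the functions $(f_k-f_l)\chi_{B(x,r)}$ converge in $L^{p(\cdot)}$ to $(f_k-f)\chi_{B(x,r)}$ as $l\to\infty$ (since $f_l\to f$ in $L^{p(\cdot)}$ on the ball $B(0,|x|+r)\supset B(x,r)$), so the inner norms converge pointwise in $x$. Applying Fatou's lemma to the outer $L^q$ integral,
\[
\int_{\mathbb{R}^n}\Big(|B(x,r)|^{\frac{1}{\alpha}-\frac{1}{p(x)}-\frac{1}{q}}\|(f_k-f)\chi_{B(x,r)}\|_{L^{p(\cdot)}}\Big)^q dx\le\liminf_{l\to\infty}\|f_k-f_l\|_{(L^{p(\cdot)},L^q)^\alpha}^q\le\varepsilon^q .
\]
Taking the supremum over $r>0$ gives $\|f_k-f\|_{(L^{p(\cdot)},L^q)^\alpha}\le\varepsilon$ for $k\ge K$; in particular $f_k-f$, and therefore $f=f_k-(f_k-f)$, belongs to $(L^{p(\cdot)},L^q)^\alpha(\mathbb{R}^n)$, and $f_k\to f$ in norm, which proves completeness. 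I expect the only delicate points to be the local embedding—where one must exploit that $|B(x,r)|$ is $x$-independent and that $p_-,p^+$ are finite with $p_->1$—and the bookkeeping in the Fatou step, namely justifying the pointwise-in-$x$ convergence of the inner Luxemburg norms before passing to the limit under the outer integral; the remaining arguments are routine.
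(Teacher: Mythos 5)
Your proof is correct, but it takes a genuinely different route from the paper's. The paper uses the classical series criterion for completeness: it passes to a Cauchy sequence with $\|f_{j+1}-f_j\|<2^{-j}$, forms the dominating function $g=|f_1|+\sum_j|f_{j+1}-f_j|$, asserts that $\|g\|<\infty$ forces $g<\infty$ a.e.\ (so the telescoping series converges pointwise to a limit $f$ with $|f|\le g$), and then bounds $\|f-f_J\|$ by the tail $\sum_{j\ge J}\|f_{j+1}-f_j\|$. That argument leans implicitly on two lattice-type properties of the norm --- that finite norm implies finiteness a.e., and that the norm is countably subadditive on a.e.-convergent series --- neither of which is justified in the paper. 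Your approach supplies exactly the missing ingredients in a more explicit form: the local embedding $\|g\chi_{B_0}\|_{L^{p(\cdot)}}\le C\|g\|_{(L^{p(\cdot)},L^q)^\alpha}$ (which is the honest reason finite-norm functions are locally in $L^{p(\cdot)}$, hence finite a.e., and which produces the candidate limit from the completeness of $L^{p(\cdot)}$ on balls), and the Fatou/lower-semicontinuity step for the outer $L^q$ integral (which is the honest version of the tail estimate). The price you pay is slightly more bookkeeping --- the diagonal extraction over the exhaustion $B(0,N)$ and the verification that the inner Luxemburg norms converge pointwise in $x$ before applying Fatou --- but both are handled correctly, and the $q=\infty$ modifications you flag are indeed routine. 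In short: same theorem, but your proof is the more self-contained of the two.
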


\begin{proof}
 First, we will check the triangle inequality. For $f, g\in(L^{p(\cdot)}, L^q)^\alpha(\mathbb{R}^n)$,
\begin{align*}
\|f+g\|_{(L^{p(\cdot)}, L^q)^\alpha(\mathbb{R}^n)}
&=\sup_{r>0}\left\||B|^{\frac{1}{\alpha}-\frac{1}{p(\cdot)}-\frac{1}{q}}\|(f+g)\chi_{B}\|_{L^{p(\cdot)}(\mathbb{R}^n)}\right\|_{L^q(\mathbb{R}^n)}\\
&\leq\sup_{r>0}\left\||B|^{\frac{1}{\alpha}-\frac{1}{p(\cdot)}-\frac{1}{q}}\|f\chi_{B}\|_{L^{p(\cdot)}(\mathbb{R}^n)}\right\|_{L^q(\mathbb{R}^n)}
     +\sup_{r>0}\left\||B|^{\frac{1}{\alpha}-\frac{1}{p(\cdot)}-\frac{1}{q}}\|g\chi_{B}\|_{L^{p(\cdot)}(\mathbb{R}^n)}\right\|_{L^q(\mathbb{R}^n)}\\
&=\|f\|_{(L^{p(\cdot)}, L^q)^\alpha(\mathbb{R}^n)}+\|g\|_{(L^{p(\cdot)}, L^q)^\alpha(\mathbb{R}^n)}.
\end{align*}\par
The positivity and the homogeneity are both clear. Thus, we prove that $(L^{p(\cdot)}, L^q)^\alpha(\mathbb{R}^n)$ are spaces with norm $\|\cdot\|_{(L^{p(\cdot)}, L^q)^\alpha(\mathbb{R}^n)}$.\par
It remains to check the completeness. Without loss the generality, let a Cauchy sequence
$\{f_j\}_{j=1}^{\infty}\subset (L^{p(\cdot)}, L^q)^\alpha(\mathbb{R}^n)$ satisfy $$\|f_{j+1}-f_j\|_{(L^{p(\cdot)}, L^q)^\alpha(\mathbb{R}^n)}<2^{-j}.$$\par
We write $g=|f_1|+\sum_{j=1}^{\infty}|f_{j+1}-f_j|$. Then
$$\|g\|_{(L^{p(\cdot)}, L^q)^\alpha(\mathbb{R}^n)}\le \|f_1\|_{(L^{p(\cdot)}, L^q)^\alpha(\mathbb{R}^n)}+\sum_{j=1}^{\infty}\|f_{j+1}-f_{j}\|_{(L^{p(\cdot)}, L^q)^\alpha(\mathbb{R}^n)}<\infty,$$
for almost everywhere $x\in\mathbb{R}^n$,
$|g(x)|=|f_1(x)|+\sum_{j=1}^{\infty}|f_{j+1}(x)-f_j(x)|<\infty.$\par
Thus if $f=f_1+\sum_{j=1}^{\infty}(f_{j+1}-f_j)=\lim\limits_{j\rightarrow\infty}f_j$, then $|f|\leq |g|\in(L^{p(\cdot)}, L^q)^\alpha(\mathbb{R}^n)$.
We write $f_J=f_1+\sum_{j=1}^{J-1}(f_{j+1}-f_j)$, furthermore,
\begin{align*}
\|f-f_{J}\|_{(L^{p(\cdot)}, L^q)^\alpha(\mathbb{R}^n)}
&=\|\sum^{\infty}_{j=1}(f_{j+1}-f_{j})-\sum^{J-1}_{j=1}(f_{j+1}-f_{j})\|_{(L^{p(\cdot)}, L^q)^\alpha(\mathbb{R}^n)}\\
&\leq \sum^{\infty}_{j=J}\|f_{j+1}-f_{j}\|_{(L^{p(\cdot)}, L^q)^\alpha(\mathbb{R}^n)}
\leq 2\cdot 2^{-J}
\end{align*}
and
$$\lim\limits_{J\rightarrow\infty}\|f-f_{J}\|_{(L^{p(\cdot)}, L^q)^\alpha(\mathbb{R}^n)}=0.$$\par
So we prove that $(L^{p(\cdot)}, L^q)^\alpha(\mathbb{R}^n)$ are Banach spaces.
\end{proof}

\begin{lemma}\label{L2.1}
Let $f\in L_{loc}^1(\mathbb{R}^n)$ and $p(\cdot)\in \mathcal{P}(\R^n)$, then
$$\lim_{r\rightarrow0}\frac{\|f\chi_{B}\|_{L^{p(\cdot)(\mathbb{R}^n)}}}{\|\chi_{B}\|_{L^{p(\cdot)}(\mathbb{R}^n)}}
   =|f(x)|~~~~a.e.~~x\in\mathbb{R}^n.$$
\end{lemma}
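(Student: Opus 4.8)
The plan is to reduce the statement to a Lebesgue-type differentiation result, using the comparison with characteristic functions from Lemma~\ref{l1.3} together with the generalized H\"older inequality (Lemma~\ref{l1.2}). First I would fix $x$ to be a Lebesgue point of $f$ (which is a.e.\ $x\in\mathbb{R}^n$ since $f\in L^1_{loc}$), and work only with radii $r\le 1$ so that Lemma~\ref{l1.3} gives $\|\chi_{B(x,r)}\|_{L^{p(\cdot)}(\mathbb{R}^n)}\sim |B(x,r)|^{1/p(x)}$. The upper bound is the easy direction: since $|f(y)|\le |f(x)| + |f(y)-f(x)|$ pointwise on $B(x,r)$, the triangle inequality in $L^{p(\cdot)}$ gives
\[
\|f\chi_{B}\|_{L^{p(\cdot)}}\le |f(x)|\,\|\chi_B\|_{L^{p(\cdot)}} + \|(f-f(x))\chi_B\|_{L^{p(\cdot)}},
\]
so after dividing by $\|\chi_B\|_{L^{p(\cdot)}}$ it suffices to show the second term, normalized, tends to $0$.

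The core estimate is therefore
\[
\frac{\|(f-f(x))\chi_{B(x,r)}\|_{L^{p(\cdot)}(\mathbb{R}^n)}}{\|\chi_{B(x,r)}\|_{L^{p(\cdot)}(\mathbb{R}^n)}}\longrightarrow 0\quad\text{as } r\to 0.
\]
To prove this I would apply the generalized H\"older inequality of Lemma~\ref{l1.2} by pairing $(f-f(x))\chi_B$ with the unit-norm dual function $\chi_B/\|\chi_B\|_{L^{p'(\cdot)}}$, which yields
\[
\|(f-f(x))\chi_B\|_{L^{p(\cdot)}}\le \sup_{\|g\|_{L^{p'(\cdot)}}\le 1}\int_B |f-f(x)|\,|g|\,dx;
\]
bounding a single such pairing gives
\[
\frac{\|(f-f(x))\chi_B\|_{L^{p(\cdot)}}}{\|\chi_B\|_{L^{p(\cdot)}}}\lesssim \frac{\|\chi_B\|_{L^{p'(\cdot)}}}{\|\chi_B\|_{L^{p(\cdot)}}}\cdot\frac{1}{|B|}\int_B|f(y)-f(x)|\,dy \cdot |B|\Big/\|\chi_B\|_{L^{p'(\cdot)}}\cdot(\cdots),
\]
so the cleaner route is: use Lemma~\ref{l1.3} (for $r\le 1$) to replace both $\|\chi_B\|_{L^{p(\cdot)}}$ and $\|\chi_B\|_{L^{p'(\cdot)}}$ by $|B|^{1/p(x)}$ and $|B|^{1/p'(x)}$ respectively, whose product is $|B|$ (up to constants), matching Lemma~\ref{l1.4}; then the normalized quantity is controlled by $\frac{1}{|B|}\int_{B(x,r)}|f(y)-f(x)|\,dy$, which tends to $0$ exactly because $x$ is a Lebesgue point of $f$.

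The lower bound is symmetric: from $|f(x)|\le |f(y)| + |f(y)-f(x)|$ on $B$ one gets, after integrating against the dual characteristic function again,
\[
|f(x)|\,\|\chi_B\|_{L^{p(\cdot)}}\le \|f\chi_B\|_{L^{p(\cdot)}} + \|(f-f(x))\chi_B\|_{L^{p(\cdot)}},
\]
and dividing by $\|\chi_B\|_{L^{p(\cdot)}}$ and letting $r\to 0$ gives $|f(x)|\le \liminf_{r\to 0}\|f\chi_B\|_{L^{p(\cdot)}}/\|\chi_B\|_{L^{p(\cdot)}}$, combining with the upper bound to produce the limit. The main obstacle I anticipate is a technical one: Lemma~\ref{l1.3} as quoted requires the log-H\"older conditions (1) and (2), which are \emph{not} among the hypotheses of this lemma (only $p(\cdot)\in\mathcal{P}(\mathbb{R}^n)$ is assumed); to stay honest I would either (a) prove directly, from the definition of the Luxemburg norm and the fact that $p_-\le p(y)\le p^+$ on a small ball, the two-sided bound $|B|^{1/p^+}\lesssim \|\chi_B\|_{L^{p(\cdot)}}\lesssim |B|^{1/p_-}$ for $r\le 1$ and chase the resulting powers of $|B|$ (they still collapse correctly in the ratio $\|(f-f(x))\chi_B\|_{L^{p(\cdot)}}/\|\chi_B\|_{L^{p(\cdot)}}\lesssim |B|^{1/p_- - 1/p^+}\cdot\frac1{|B|}\int_B|f-f(x)|$, which still $\to 0$ since the extra power $|B|^{1/p_--1/p^+}\to 0$ or stays bounded), or (b) simply add the log-H\"older assumption to the lemma statement, which is the standing assumption everywhere else in the paper. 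I would go with route (a) so that the lemma holds as literally stated, folding the elementary two-sided estimate for $\|\chi_B\|_{L^{p(\cdot)}}$ into the proof.
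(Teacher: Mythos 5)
Your core estimate is the step that fails. The inequality
\[
\frac{\|(f-f(x))\chi_{B}\|_{L^{p(\cdot)}(\mathbb{R}^n)}}{\|\chi_{B}\|_{L^{p(\cdot)}(\mathbb{R}^n)}}\lesssim\frac{1}{|B|}\int_{B}|f(y)-f(x)|\,dy
\]
goes in the wrong direction: the generalized H\"older inequality (Lemma \ref{l1.2}) together with Lemma \ref{l1.4} gives precisely the reverse bound, i.e.\ the $L^1$-average is dominated by the normalized $L^{p(\cdot)}$-quantity, not the other way around. To bound the Luxemburg norm from above by duality you must control $\int_B|f-f(x)|\,|h|\,dy$ uniformly over \emph{all} $h$ with $\|h\|_{L^{p'(\cdot)}}\le 1$, not just over the single test function $\chi_B/\|\chi_B\|_{L^{p'(\cdot)}}$; an $h$ concentrated on a tiny subset of $B$ where $|f-f(x)|$ is large destroys the estimate. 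Already for constant $p=2$ in dimension one, $g_\epsilon=\epsilon^{-1/2}\chi_{(0,\epsilon)}$ satisfies $\frac{1}{r}\int_0^r|g_\epsilon|\to 0$ as $\epsilon\to 0$ while $\|g_\epsilon\chi_{(0,r)}\|_{L^2}/\|\chi_{(0,r)}\|_{L^2}=r^{-1/2}$ stays fixed, so no inequality of this shape can hold. The paper instead proves a two-sided sandwich: H\"older gives $\frac{1}{|B|}\int_B|f|\lesssim\|f\chi_B\|_{L^{p(\cdot)}}/\|\chi_B\|_{L^{p(\cdot)}}$ from below, and the embedding $L^{p^+}(B)\hookrightarrow L^{p(\cdot)}(B)$ on a ball of finite measure gives the upper bound $\bigl(\frac{1}{|B|}\int_B|f|^{p^+}\bigr)^{1/p^+}$, after which the Lebesgue differentiation theorem is applied to both $|f|$ and $|f|^{p^+}$. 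Your decomposition $f=f(x)+(f-f(x))$ could be salvaged the same way, by replacing the $L^1$-average of $f-f(x)$ with its $L^{p^+}$-average, which tends to $0$ at $p^+$-Lebesgue points---but this, like the paper's own argument, tacitly requires $f\in L^{p^+}_{loc}$ rather than merely $f\in L^1_{loc}$.

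A second, smaller problem is that your ``route (a)'' does not deliver the lemma as literally stated. With only the crude bounds $|B|^{1/p^+}\lesssim\|\chi_B\|_{L^{p(\cdot)}}\lesssim|B|^{1/p_-}$ for $|B|\le 1$, the upper and lower estimates for the ratio differ by the factor $|B|^{1/p_--1/p^+}$, which tends to $0$ (not to a constant) as $r\to 0$ whenever $p_-<p^+$; you then trap the ratio between quantities whose ratio degenerates, and cannot conclude that the limit equals $|f(x)|$. To make the powers cancel you genuinely need $\|\chi_B\|_{L^{p(\cdot)}}\|\chi_B\|_{L^{p'(\cdot)}}\sim|B|$ (Lemma \ref{l1.4}) or the pointwise estimate $\|\chi_B\|_{L^{p(\cdot)}}\sim|B|^{1/p(x)}$ of Lemma \ref{l1.3}, i.e.\ the log-H\"older hypotheses. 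You were right to flag that these are absent from the lemma's hypotheses (the paper simply invokes Lemma \ref{l1.3} without them), but the honest fix is your route (b)---adding the hypotheses to the statement---not route (a).
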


\begin{proof}
Assume $1<p(\cdot)\leq p^+<\infty$. By Lemma \ref{l1.3}, easy to know that

$$\|\chi_{B}\|_{L^{p(\cdot)}(\mathbb{R}^n)}\sim\|\chi_{B}\|_{L^{p'(\cdot)}(\mathbb{R}^n)}^{-1}|B|.$$\par

Using the generalized H\"{o}lder inequality,\\
\begin{align*}
\frac{1}{|B|}\int_{B}|f(y)|dy
\lesssim \frac{1}{|B|}\|f\chi_{B}\|_{L^{p(\cdot)(\mathbb{R}^n)}}\|\chi_{B}\|_{L^{p'(\cdot)}(\mathbb{R}^n)}
\lesssim \frac{\|f\chi_{B}\|_{L^{p(\cdot)(\mathbb{R}^n)}}}{\|\chi_{B}\|_{L^{p(\cdot)}(\mathbb{R}^n)}}.
\end{align*}\par
By Lemma1.3, we also have
\begin{align*}
\frac{\|f\chi_{B}\|_{L^{p(\cdot)}(\mathbb{R}^n)}}{\|\chi_{B}\|_{L^{p(\cdot)}(\mathbb{R}^n)}}
\sim\frac{1}{|B|^{\frac{1}{p(\cdot)}}}\|f\chi_{B}\|_{L^{p(\cdot)}(\mathbb{R}^n)}
\lesssim \frac{1}{|B|^{\frac{1}{p^+}}}\|f\chi_{B}\|_{L^{p^+}(\mathbb{R}^n)}
=\bigg(\frac{1}{|B|}\int_{B}|f(y)|^{p^+}dy\bigg)^{\frac{1}{p^+}}.
\end{align*}\par
Thus,
$$\frac{1}{|B|}\int_{B}|f(y)|dy
\lesssim \frac{\|f\chi_{B}\|_{L^{p(\cdot)(\mathbb{R}^n)}}}{\|\chi_{B}\|_{L^{p(\cdot)}(\mathbb{R}^n)}}
\lesssim \bigg(\frac{1}{|B|}\int_{B}|f(y)|^{p^+}dy\bigg)^{\frac{1}{p^+}}.$$

By Lebesgue differential theorem,
\begin{align*}
\lim_{r\rightarrow0}\|\chi_{B}\|_{L^{p(\cdot)}(\mathbb{R}^n)}^{-1}\|f\chi_{B}\|_{L^{p(\cdot)}(\mathbb{R}^n)}
   =|f(x)|~~~~a.e.~~x\in\mathbb{R}^n.
\end{align*}\par
The proof is complete.
\end{proof}
\begin{proposition}\label{p2.3}
Let $p(\cdot)\in \mathcal{P}(\R^n)$, $(L^{p(\cdot)}, L^q)^\alpha(\mathbb{R}^n)$ are nontrivial if and only if $p(\cdot)\leq \alpha \leq q$.
\end{proposition}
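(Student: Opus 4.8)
The plan is to prove the two implications of the ``nontrivial if and only if'' separately, where by ``nontrivial'' I understand that $(L^{p(\cdot)},L^q)^\alpha(\mathbb{R}^n)$ contains a nonzero function (equivalently, is not reduced to $\{0\}$).

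For the sufficiency ($p(\cdot)\leq\alpha\leq q \Rightarrow$ nontrivial), I would exhibit an explicit nonzero member. The natural candidate is $f=\chi_{B_0}$ for a fixed ball $B_0$, or more safely a compactly supported bounded function. The point is to bound $\sup_{r>0}\big\||B(\cdot,r)|^{1/\alpha-1/p(\cdot)-1/q}\|f\chi_{B(\cdot,r)}\|_{L^{p(\cdot)}}\big\|_{L^q}$. I would split the supremum into the regime $r\leq 1$ (small balls) and $r\geq 1$ (large balls), using Lemma~\ref{l1.3} to replace $\|f\chi_{B(\cdot,r)}\|_{L^{p(\cdot)}}$ by a power of $|B(\cdot,r)|$ times a local $L^{p(\cdot)}$ quantity. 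For small $r$, $\|f\chi_{B(x,r)}\|_{L^{p(\cdot)}}\lesssim |B(x,r)|^{1/p(x)}\|f\|_{L^\infty}$ when $x$ lies near $\mathrm{supp}\,f$, so the integrand is controlled by $|B(x,r)|^{1/\alpha-1/q}$ which is bounded (as $r\leq 1$ and $\alpha\leq q$ makes the exponent $\geq 0$); then the outer $L^q$-norm is finite because the integrand is supported in a fixed neighbourhood of $\mathrm{supp}\,f$. For large $r$, one uses $\|f\chi_{B(x,r)}\|_{L^{p(\cdot)}}\leq\|f\|_{L^{p(\cdot)}}<\infty$ and Lemma~\ref{l1.3} again to see $|B(x,r)|^{1/p(\infty)}$ behaviour; the exponent combination $1/\alpha-1/p(\infty)-1/q$ together with $p(\cdot)\leq\alpha$ (hence $1/\alpha-1/p(\infty)\leq 0$) and the decay in $x$ when $|x|$ is large relative to $r$ should keep the $L^q$-integral finite. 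Here $\alpha\leq q$ is what makes the small-$r$ part bounded and $p(\cdot)\leq\alpha$ is what makes the large-$r$ part bounded.

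For the necessity, I would prove the contrapositive in two cases. If $\alpha>q$: pick any nonzero $f\in(L^{p(\cdot)},L^q)^\alpha$ and a Lebesgue point $x_0$ with $f(x_0)\neq 0$; by Lemma~\ref{L2.1}, $\|f\chi_{B(x_0,r)}\|_{L^{p(\cdot)}}/\|\chi_{B(x_0,r)}\|_{L^{p(\cdot)}}\to|f(x_0)|>0$ as $r\to 0$, so by Lemma~\ref{l1.3} $\|f\chi_{B(x,r)}\|_{L^{p(\cdot)}}\gtrsim|B(x,r)|^{1/p(x)}|f(x_0)|$ for $x$ in a small ball and $r$ small; then the integrand behaves like $|B(x,r)|^{1/\alpha-1/q}$, and since $\alpha>q$ this tends to $\infty$ as $r\to 0$ while remaining bounded below on a set of fixed positive measure, forcing the $L^q$-norm, and hence the supremum over $r$, to be infinite — contradiction, so $f\equiv 0$. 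If $\alpha<p(\cdot)$ somewhere (say on a set of positive measure, or using $\alpha<p_-$): here I would instead let $r\to\infty$. For a nonzero $f$, fix a ball $B_1$ on which $\int_{B_1}|f|>0$; for $x$ with $|x|$ not too large and $r$ large, $B_1\subset B(x,r)$, so $\|f\chi_{B(x,r)}\|_{L^{p(\cdot)}}\geq\|f\chi_{B_1}\|_{L^{p(\cdot)}}=:c>0$, whence the integrand is $\gtrsim c\,|B(x,r)|^{1/\alpha-1/p(x)-1/q}\sim c\,r^{n(1/\alpha-1/p(\infty)-1/q)}$ (using Lemma~\ref{l1.3} for large balls) on the set $\{|x|\leq r\}$, which has measure $\sim r^n$. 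The outer $L^q$-norm is then $\gtrsim c\,r^{n/q}\cdot r^{n(1/\alpha-1/p(\infty)-1/q)}=c\,r^{n(1/\alpha-1/p(\infty))}$, which blows up as $r\to\infty$ if $1/\alpha>1/p(\infty)$, i.e. $\alpha<p(\infty)$; a parallel argument near a Lebesgue point handles $\alpha<p(x)$ on a positive-measure set. This again forces $f\equiv 0$.

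The main obstacle I anticipate is the bookkeeping with the variable exponent in Lemma~\ref{l1.3}: that lemma gives $\|\chi_Q\|_{L^{p(\cdot)}}\sim|Q|^{1/p(x)}$ only for \emph{small} cubes ($|Q|\leq 2^n$, $x\in Q$) and $\sim|Q|^{1/p(\infty)}$ for \emph{large} cubes, so every estimate must be done separately in the small-ball and large-ball regimes, and the two ``bad'' directions (blow-up as $r\to 0$ versus $r\to\infty$) are genuinely different phenomena tied to the two conditions $\alpha\leq q$ and $p(\cdot)\leq\alpha$ respectively. A secondary subtlety is making the necessity argument robust: one must choose the center region $\{|x|\leq r\}$ (or a fixed small ball of centers) so that the ball $B(x,r)$ actually contains the chosen reference ball $B_1$ uniformly, which is why I restrict to $|x|\leq r$ and then only $r$ large. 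Also, strictly speaking the statement needs $\alpha\leq q$ to be read alongside $p_-\leq\alpha$ rather than the pointwise $p(x)\leq\alpha$; I would state at the outset that $p(\cdot)\leq\alpha$ means $p^+\leq\alpha$ (or clarify the intended reading) so the exponent $1/\alpha-1/p(x)-1/q$ has a fixed sign, and Proposition~\ref{p2.1}(i) already uses this convention.
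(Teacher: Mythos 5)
Your proposal is correct and follows essentially the same route as the paper: sufficiency by testing the characteristic function of a fixed ball with the supremum split at $r=1$ (small-$r$ regime controlled by $\alpha\leq q$, large-$r$ regime by $p(\cdot)\leq\alpha$), necessity of $\alpha\leq q$ via Lemma~\ref{L2.1} and letting $r\to 0$, and necessity of $p(\cdot)\leq\alpha$ by letting $r\to\infty$ with a fixed reference ball contained in $B(x,r)$ for all centers $x$ in a set of measure $\sim r^n$. If anything you are slightly more careful than the paper in the last step (you run the $r\to\infty$ blow-up for an arbitrary nonzero $f$, whereas the paper only checks that $\chi_{B(x_0,r_0)}$ has infinite norm, which by itself does not prove triviality), and you rightly flag the $p_-$ versus $p^+$ ambiguity in the reading of $p(\cdot)\leq\alpha$, which the paper leaves unresolved: its sufficiency argument uses $p^+\leq\alpha$ while its necessity argument only yields $p_-\leq\alpha$.
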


\begin{proof}
Suppose the spaces $(L^{p(\cdot)}, L^q)^\alpha(\mathbb{R}^n)$ are nontrivial. We prove these by contradiction. In fact, by Lemma \ref{L2.1}, we know
$$\lim_{r\rightarrow0}\frac{\|f\chi_{B}\|_{L^{p(\cdot)(\mathbb{R}^n)}}}{\|\chi_{B}\|_{L^{p(\cdot)}(\mathbb{R}^n)}}
   =|f(x)|~~~~a.e.~~x\in\mathbb{R}^n.$$\par
Thus, if $\alpha > q$ and $f\neq 0$, then
$$ \lim_{r\rightarrow 0}|B|^{\frac{1}{\alpha}-\frac{1}{q}}
\frac{\|f\chi_{B}\|_{L^{p(\cdot)(\mathbb{R}^n)}}}{\|\chi_{B}\|_{L^{p(\cdot)}(\mathbb{R}^n)}}=\infty.$$\par
Therefor, we prove $\alpha\leq q$.

If $\alpha<p(\cdot)$, then $\alpha<p_- $ and $\|\chi_{B(x_{0},r_{0})}\|_{(L^{p(\cdot)},L^{q})^{\alpha}(\mathbb{R}^n)}=\infty$ for any ball $B(x_0,r_0)$, which show that $(L^{p(\cdot)},L^{q})^{\alpha}(\mathbb{R}^n)$ are trivial spaces. Hence, we acquire $\alpha\geq p(\cdot)$. Indeed, if $x\in B(x_0,\frac{r}{2})$ and $2r_0<r$, then for any $y\in B(x_0,r_0)$, we have
$$|x-y|\leq|x_0-x|+|x_0-y|\leq\frac{r}{2}+r_0<r,$$
that is $B(x_0,r_0)\subset B(x,r)$. Therefore,
\begin{align*}
\|\chi_{B(x_{0},r_{0})}\|_{(L^{p(\cdot)},L^{q})^{\alpha}(\mathbb{R}^n)}
&\sim\sup_{r>0}
\left\| r^{\frac{n}{\alpha}-\frac{n}{p(\cdot)}-\frac{n}{q}}\|\chi_{B(x_{0},r_0)}\chi_{B(\cdot,r)}\|_{L^{p(\cdot)}(\mathbb{R}^n)} \right\|_{L^q(\mathbb{R}^n)}\\
&\geq\sup_{r>2r_0}r^{\frac{n}{\alpha}-\frac{n}{p_-}-\frac{n}{q}}
\left\|\chi_{B(x_0,\frac{r}{2})}\|\chi_{B(x_{0},r_0)}\|_{L^{p(\cdot)}(\mathbb{R}^n)} \right\|_{L^{q}(\mathbb{R}^n)}\\
&\gtrsim\sup_{r>2r_0} r^{\frac{n}{\alpha}-\frac{n}{p_-}-\frac{n}{q}}\cdot r^{\frac{n}{q}}\\
&\geq\lim_{r\rightarrow+\infty} r^{\frac{n}{\alpha}-\frac{n}{p_-}}
=+\infty.
\end{align*}

For the opposite side, it is easy to prove $\chi_{B(0,1)}\in(L^{p(\cdot)},L^{q})^{\alpha}(\mathbb{R}^n)$ if $p(\cdot)\leq \alpha \leq q$. Obviously,
$$\|\chi_{B(0,1)}\|_{(L^{p(\cdot)},L^{q})^{\alpha}(\mathbb{R}^n)} \sim\sup_{r>0}
\left\|r^{\frac{n}{\alpha}-\frac{n}{p(\cdot)}-\frac{n}{q}} \|\chi_{B(0,1)}\chi_{B(\cdot,r)}\|_{L^{p(\cdot)}(\mathbb{R}^n)} \right\|_{L^{q}(\mathbb{R}^n)}.$$\par
If $r>1$, by $\frac{1}{\alpha}-\frac{1}{p(\cdot)}\leq\frac{1}{\alpha}-\frac{1}{p_+}\le 0$, we obtain
\begin{align*}
\quad\sup_{r>1}
\left\|r^{\frac{n}{\alpha}-\frac{n}{p(\cdot)}-\frac{n}{q}} \|\chi_{B(0,1)}\chi_{B(\cdot,r)}\|_{L^{p(\cdot)}(\mathbb{R}^n)} \right\|_{L^{q}(\mathbb{R}^n)}
&\le\sup_{r>1}r^{\frac{n}{\alpha}-\frac{n}{p_+}-\frac{n}{q}}
\left\| \|\chi_{B(0,1)}\|_{L^{p(\cdot)}(\mathbb{R}^n)}\cdot\chi_{B(0,r+1)}\right\|_{L^{q}(\mathbb{R}^n)}\\
&\lesssim \sup_{r>1}r^{\frac{n}{\alpha}-\frac{n}{p_+}-\frac{n}{q}}
(r+1)^{\frac{n}{q}}\\
&\lesssim \sup_{r>1}r^{\frac{n}{\alpha}-\frac{n}{p_+}}\\
&<\infty.
\end{align*}\par
For $r\leq 1$, then by $\frac{1}{\alpha}-\frac{1}{q}\ge 0$ and Lemma \ref{l1.3}, we have
\begin{align*}
\quad\sup_{r\leq 1}
\left\|r^{\frac{n}{\alpha}-\frac{n}{p(\cdot)}-\frac{n}{q}} \|\chi_{B(0,1)}\chi_{B(\cdot,r)}\|_{L^{p(\cdot)}(\mathbb{R}^n)} \right\|_{L^{q}(\mathbb{R}^n)}
&\lesssim\sup_{r> 1}
\left\|r^{\frac{n}{\alpha}-\frac{n}{q}}\cdot |B|^{-\frac{1}{p(x)}}\cdot \|\chi_{B(\cdot,r)}\|_{L^{p(\cdot)}(\mathbb{R}^n)}\cdot\chi_{B(0,r+1)}\right\|_{L^{q}(\mathbb{R}^n)}\\
&\lesssim\sup_{r> 1} r^{\frac{n}{\alpha}-\frac{n}{q}}\|\chi_{B(0,r+1)}\|_{L^{q}(\mathbb{R}^n)}\\
&\lesssim \sup_{r>0}r^{\frac{n}{\alpha}-\frac{n}{q}}\cdot(r+1)^{\frac{n}{q}}\\
&<\infty.
\end{align*}

\end{proof}

\section{\hspace{-0.6cm}{\bf }~~The pre-dual of variable Fofana's spaces\label{s3}}

In this section, we give the equivalent norms of variable Fofana's spaces. Let $Q_{r,k}=r[k+[0,1)^n]$ ($k\in\mathbb{Z}^n$) and
$\left\|\{a_k\}_{k\in \mathbb{Z}^n}\right\|_{\ell^{q}} :=\left(\sum_{k\in\mathbb{Z}^n}|a_k|^{q}\right)^{\frac{1}{q}}.$

\begin{proposition}\label{p3.1}
Let $p(\cdot)\in \mathcal{P}(\R^n), 1\leq q, \alpha\leq\infty$ and $p(\cdot)\leq \alpha \leq q$. We define ``discrete" variable Fofana's spaces.

\begin{align*}
(L^{p(\cdot)},\ell^{q})(\mathbb{R}^n)&:=\left\{f\in L_{loc}^{p(\cdot)}(\mathbb{R}^n): \|f\|_{p(\cdot),q,} :=\big\|\{\|f\chi_{Q_{1,k}}\|_{L^{p(\cdot)}}\}_{k\in\mathbb{Z}^n}\big\|_{\ell^{q}}\right\},\\
(L^{p(\cdot)},\ell^{q})^\alpha(\R^n)&:=\left\{f\in L_{loc}^{p(\cdot)}(\R^n):\|f\|_{p(\cdot),q,\alpha} :=\sup_{r>0}r^{\frac{n}{\alpha}-N_{r,p}}{_r\|f\|_{p(\cdot),q}}<\infty\right\},
\end{align*}
where
\begin{eqnarray*}
N_{r,p}=
\begin{cases}
\frac{n}{p_-}  ~~~~~~r>1,\\
\frac{n}{p^+}  ~~~~~r\leq1
\end{cases}
\end{eqnarray*}
and
$$_r\|f\|_{p(\cdot),q}:=\big\|\{\|f\chi_{Q_{r,k}}\|_{L^{p(\cdot)}}\}_{k\in\mathbb{Z}^n}\big\|_{\ell^{q}}.$$
Thus, we have
$$\|f\|_{(L^{p(\cdot)},L^{q})(\mathbb{R}^n)}\sim \|f\|_{{p(\cdot)},{q},},~\|f\|_{(L^{p(\cdot)},L^{q})^\alpha(\mathbb{R}^n)}\sim \|f\|_{{p(\cdot)},{q},\alpha}.$$
\end{proposition}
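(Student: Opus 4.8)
The plan is to establish the two equivalences by a covering/comparison argument relating continuous balls $B(x,r)$ to the cubes $Q_{r,k}$ of the same scale, and then to handle the $L^{p(\cdot)}$-norm of the characteristic functions using Lemma~\ref{l1.3} to justify the piecewise exponent $N_{r,p}$. I would treat only the second equivalence $\|f\|_{(L^{p(\cdot)},L^{q})^\alpha}\sim\|f\|_{p(\cdot),q,\alpha}$ in detail, since the first one is the special case $r=1$ up to the obvious remark that the lattice $\{Q_{1,k}\}$ and the balls $\{B(\cdot,1)\}$ are mutually comparable coverings with bounded overlap.

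First I would fix $r>0$ and compare the continuous quantity
$\bigl\||B(\cdot,r)|^{1/\alpha-1/p(\cdot)-1/q}\|f\chi_{B(\cdot,r)}\|_{L^{p(\cdot)}}\bigr\|_{L^q}$
with $r^{n/\alpha-N_{r,p}}\,{}_r\|f\|_{p(\cdot),q}$. The point is that $|B(x,r)|=c_n r^n$, so the factor $|B(x,r)|^{1/\alpha-1/q}$ is (up to a constant depending only on $n$) just $r^{n/\alpha-n/q}$, which is exactly the $r$-power appearing in the discrete norm once one checks $N_{r,p}$ accounts for the remaining $r^{-n/p(\cdot)}$ piece together with $\|\chi_{B(\cdot,r)}\|_{L^{p(\cdot)}}$. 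Concretely, by Lemma~\ref{l1.3}, for $r>1$ one has $\|\chi_{B(x,r)}\|_{L^{p(\cdot)}}\sim r^{n/p(\infty)}$ uniformly, while $p(\infty)\in[p_-,p^+]$; and for $r\le 1$ one has $\|\chi_{B(x,r)}\|_{L^{p(\cdot)}}\sim r^{n/p(x)}$ with $p(x)\in[p_-,p^+]$. The exponents $N_{r,p}=n/p_-$ (resp.\ $n/p^+$) are chosen so that $r^{-N_{r,p}}\|\chi_{B(x,r)}\|_{L^{p(\cdot)}}$ is bounded above and below by absolute constants in each regime (using $r>1\Rightarrow r^{-n/p_-}\le r^{-n/p(\infty)}\le r^{-n/p^+}$ and the reverse inequalities for $r\le1$). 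Hence $r^{-n/p(x)}\|f\chi_{B(x,r)}\|_{L^{p(\cdot)}}\sim r^{-N_{r,p}}\|f\chi_{B(x,r)}\|_{L^{p(\cdot)}}$ up to constants independent of $r$ and $x$, which reduces everything to comparing $\bigl\|\,\|f\chi_{B(\cdot,r)}\|_{L^{p(\cdot)}}\bigr\|_{L^q}$ with ${}_r\|f\|_{p(\cdot),q}=\bigl\|\{\|f\chi_{Q_{r,k}}\|_{L^{p(\cdot)}}\}_k\bigr\|_{\ell^q}$.

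For that last comparison I would use the standard two-sided inclusion between balls and dyadic-type cubes at a fixed scale: each $Q_{r,k}$ is contained in a ball $B(x_{r,k},\sqrt{n}\,r)$ centered at (say) the center of $Q_{r,k}$, and each ball $B(x,r)$ is covered by a bounded number $N(n)$ of cubes $Q_{r,k}$; also the $Q_{r,k}$ tile $\mathbb{R}^n$ with disjoint interiors. Passing from the $L^q(\mathbb{R}^n)$-integral to the $\ell^q$-sum uses that on each $Q_{r,k}$ the function $x\mapsto \|f\chi_{B(x,r)}\|_{L^{p(\cdot)}}$ is comparable (from above and below, by monotonicity of the norm in the set and the inclusions $Q_{r,k}\subset B(x,\sqrt n\,r)$ for $x\in Q_{r,k}$ and $B(x,r)\subset \bigcup_{|j-k|\le C}Q_{r,j}$) to $\|f\chi_{c\,Q_{r,k}}\|_{L^{p(\cdot)}}$ for suitable dilates, and then one absorbs the dilation constant $c$ into the overall sup over $r$ and a further finite-overlap argument. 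Rescaling via the isometry-type behavior of $\|\cdot\|_{L^{p(\cdot)}}$ on balls (again Lemma~\ref{l1.3}) lets the dilation $c\,Q_{r,k}$ be replaced by $Q_{r,k}$ at the cost of an $r$-uniform constant. Taking the supremum over $r>0$ on both sides gives the claimed equivalence. I expect the main obstacle to be the bookkeeping in replacing dilated cubes $c\,Q_{r,k}$ by $Q_{r,k}$ uniformly in $r$ across the two regimes $r>1$ and $r\le 1$: the exponent $1/p(x)$ is only locally constant in the precise sense of Lemma~\ref{l1.3} (so its value "jumps" between $p(\infty)$ and $p(x)$ depending on the size of the cube), and one must check that the constants in $\|\chi_{cQ}\|_{L^{p(\cdot)}}\sim \|\chi_{Q}\|_{L^{p(\cdot)}}$ do not degenerate as $r\to0$ or $r\to\infty$, which is exactly where the choice of $N_{r,p}=n/p_\mp$ is forced and must be verified carefully.
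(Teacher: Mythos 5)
Your proposal is correct in strategy and follows essentially the same route as the paper: the paper proves this via Lemma \ref{l3.1} (invariance of $\left\|\|f\chi_{B(\cdot,r)}\|_{L^{p(\cdot)}}\right\|_{L^q}$ under a fixed dilation $r\mapsto\rho r$) and Lemma \ref{l3.2} (a bounded-overlap covering argument between the balls $B(x,2\sqrt{n}r)$ and the cubes $Q_{r,k}$, with Lemma \ref{l1.3} and the exponent $N_{r,p}$ absorbing the factor $r^{-n/p(\cdot)}$), and then obtains Proposition \ref{p3.1} by taking the supremum over $r>0$ and specializing to $r=1$. The one place you overstate is the claimed uniform two-sided pointwise equivalence $r^{-n/p(x)}\sim r^{-N_{r,p}}$, which cannot hold with constants independent of $r$ when $p_-<p^+$ (the ratio degenerates as $r\to0$ or $r\to\infty$); the paper instead inserts one-sided replacements between $r^{-n/p(\cdot)}$ and $r^{-N_{p,r}}$ separately in each direction of the covering estimate, which is exactly the delicate bookkeeping you yourself flag at the end.
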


Before the proof of Proposition \ref{p3.1}, the following two lemmas are necessary.

\begin{lemma}\label{l3.1}
Let $p(\cdot)\in \mathcal{P}(\R^n), 1\leq q, \alpha\leq\infty$ and $p(\cdot)\leq \alpha \leq q$. For any constant $\rho\in (0,\infty)$, we have
$$\left\|\|f\chi_{B(\cdot,r)}\|_{L^{p(\cdot)}}\right\|_{L^{q}}\sim \left\|\|f\chi_{B(\cdot,\rho r)}\|_{L^{p(\cdot)}}\right\|_{L^{q}},$$
where the positive equivalence constant are independent of $f$.
\end{lemma}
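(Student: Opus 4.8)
The plan is to reduce to the case $\rho\ge1$ and then prove the two one-sided estimates separately. If $\rho<1$, I would set $r'=\rho r$ and $\rho'=1/\rho>1$; applying the case $\rho'\ge1$ to the radius $r'$ then yields the equivalence for $\rho$, since $\rho'r'=r$. So assume $\rho\ge1$. One direction is immediate: $\rho\ge1$ forces $B(x,r)\subseteq B(x,\rho r)$ for every $x$, hence $\|f\chi_{B(x,r)}\|_{L^{p(\cdot)}}\le\|f\chi_{B(x,\rho r)}\|_{L^{p(\cdot)}}$ pointwise in $x$, and taking the $L^q$ norm gives $\big\|\,\|f\chi_{B(\cdot,r)}\|_{L^{p(\cdot)}}\big\|_{L^q}\le\big\|\,\|f\chi_{B(\cdot,\rho r)}\|_{L^{p(\cdot)}}\big\|_{L^q}$.

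For the reverse inequality I would use a finite-overlap covering. Fix vectors $z_1,\dots,z_M\in\R^n$, with $M=M(n,\rho)\lesssim(\rho+1)^n$ depending only on $n$ and $\rho$, such that $B(0,\rho)\subseteq\bigcup_{i=1}^MB(z_i,1)$ (a ball of radius $\rho$ is covered by finitely many unit balls, their number controlled by $(\rho+1)^n$). Scaling and translating this covering gives $B(x,\rho r)\subseteq\bigcup_{i=1}^MB(x+rz_i,r)$ for every $x\in\R^n$ and $r>0$, so $\chi_{B(x,\rho r)}\le\sum_{i=1}^M\chi_{B(x+rz_i,r)}$ and, by the triangle inequality in the Banach function space $L^{p(\cdot)}(\R^n)$,
$$\|f\chi_{B(x,\rho r)}\|_{L^{p(\cdot)}}\le\sum_{i=1}^M\|f\chi_{B(x+rz_i,r)}\|_{L^{p(\cdot)}}.$$
Taking the $L^q(dx)$ norm, applying Minkowski's inequality in $L^q$, and then performing the change of variables $u=x+rz_i$ in each resulting integral — which leaves $du=dx$ unchanged and turns $\|f\chi_{B(x+rz_i,r)}\|_{L^{p(\cdot)}}$ into $\|f\chi_{B(u,r)}\|_{L^{p(\cdot)}}$ — gives
$$\big\|\,\|f\chi_{B(\cdot,\rho r)}\|_{L^{p(\cdot)}}\big\|_{L^q}\le\sum_{i=1}^M\big\|\,\|f\chi_{B(\cdot+rz_i,r)}\|_{L^{p(\cdot)}}\big\|_{L^q}=M\,\big\|\,\|f\chi_{B(\cdot,r)}\|_{L^{p(\cdot)}}\big\|_{L^q},$$
with $M=M(n,\rho)$ independent of $f$ and $r$. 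Combining the two directions proves the claimed equivalence.

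The only delicate point is that $L^{p(\cdot)}(\R^n)$ is \emph{not} translation invariant, so one cannot pull the translation $\cdot\mapsto\cdot+rz_i$ through the inner $L^{p(\cdot)}$ norm; instead the translation must be absorbed into the center of the ball, and only the translation invariance of the outer $L^q$ norm (via the change of variables above) is used. For $q=\infty$ the same argument works, replacing the $L^q$ integral by the essential supremum, which is likewise translation invariant. Everything else is the standard bounded-covering argument, and apart from $q\ge1$ no further use of the hypothesis $p(\cdot)\le\alpha\le q$ is needed.
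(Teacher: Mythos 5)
Your proof is correct and follows essentially the same route as the paper's: the trivial inclusion for one direction, and for the reverse a bounded covering of $B(x,\rho r)$ by $N\sim 1$ translated balls of radius $r$, with the translation absorbed into the ball center and only the translation invariance of the outer $L^q$ norm used. Your explicit scaling of the covering (taking centers $rz_i$) and the remark about the failure of translation invariance of $L^{p(\cdot)}$ make the argument slightly more careful than the paper's, but the substance is identical.
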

\begin{proof}Firstly, we prove the lemma holds when $\rho>1$. It is obvious that
$$\left\|\|f\chi_{B(\cdot,r)}\|_{L^{p(\cdot)}}\right\|_{L^{q}}\le\left\|\|f\chi_{B(\cdot,\rho r)}\|_{L^{p(\cdot)}}\right\|_{L^{q}}.$$\par
Next, we prove the reverse inequality. It is easy to find $N\in\mathbb{N}$ and $\{x_1,x_2,\cdots,x_N\}$, such that
$$B(0,\rho r)\subset\bigcup_{j=1}^{N}B(x_j,r),$$
where $N$ is independent of $r$ and $N \sim 1$. Therefore, we have
$$\|f\chi_{B(x,\rho r)}\|_{L^{p(\cdot)}}\le\left\|f\sum_{j=1}^{N}\chi_{B(x+x_j,r)}\right\|_{L^{p(\cdot)}} \le\sum_{j=1}^{N}\left\|f\chi_{B(x+x_j,r)}\right\|_{L^{p(\cdot)}},$$
for any $x\in\R^n$. According to the translation invariance of the Lebesgue measure and $N \sim 1$, it follows that
$$\left\|\|f\chi_{B(\cdot,\rho r)}\|_{L^{p(\cdot)}}\right\|_{L^{q}} \le\sum_{j=1}^{N}\left\|\|f\chi_{B(\cdot+x_j,r)}\|_{L^{p(\cdot)}}\right\|_{L^{q}} \lesssim\left\|\|f\chi_{B(\cdot,r)}\|_{L^{p(\cdot)}}\right\|_{L^{q}}.$$\par

For the $\rho\in(0,1)$, we only need replace $r$ by $r/\rho$. The proof is completed.
\end{proof}

The following result plays an indispensable role in the proof of Proposition \ref{p3.1}.
\begin{lemma}\label{l3.2}
Let $p(\cdot)\in \mathcal{P}(\R^n), 1\leq q, \alpha\leq\infty$ and $p(\cdot)\leq \alpha \leq q$. Then we have
$$r^{\frac{n}{\alpha}-N_{p,r}}\left\|\left\{\|f\chi_{Q_{r,k}}\|_{L^{p(\cdot)}}\right\}_{k\in\mathbb{Z}^n}\right\|_{\ell^{q}}\sim \left\|r^{\frac{n}{\alpha}-\frac{n}{p(\cdot)}-\frac{n}{q}}\|f\chi_{B(\cdot,r)}\|_{L^{p(\cdot)}}\right\|_{L^{q}},$$
where the positive equivalence constants are independent of $f$ .
\end{lemma}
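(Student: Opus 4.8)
The plan is to establish the two‐sided equivalence by a covering argument, matching the cube‐based quantity $r^{\frac{n}{\alpha}-N_{p,r}}\,_r\|f\|_{p(\cdot),q}$ with the ball‐based quantity on the right by comparing each $Q_{r,k}$ to a ball of comparable radius and vice versa. First I would reduce to the case $r$ fixed and absorb the power $r^{\frac{n}{\alpha}-n/q}$ on both sides, so that the real task is to show
\[
r^{-N_{p,r}}\left\|\left\{\|f\chi_{Q_{r,k}}\|_{L^{p(\cdot)}}\right\}_{k\in\mathbb{Z}^n}\right\|_{\ell^{q}}\sim \left\|r^{-\frac{n}{p(\cdot)}}\|f\chi_{B(\cdot,r)}\|_{L^{p(\cdot)}}\right\|_{L^{q}}.
\]
The key geometric fact is that $Q_{r,k}=r[k+[0,1)^n]$ has diameter $r\sqrt n$, so $Q_{r,k}\subset B(x,Cr)$ for any $x\in Q_{r,k}$ with $C=\sqrt n$ (say), and conversely any ball $B(x,r)$ with $x\in Q_{r,k}$ is contained in the union of the $3^n$ neighbouring cubes $Q_{r,k'}$, $|k'-k|_\infty\le 1$. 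Via Lemma \ref{l3.1} the radius can be rescaled freely between $r$ and $Cr$, so these inclusions give pointwise domination of $\|f\chi_{B(\cdot,r)}\|_{L^{p(\cdot)}}$ on $Q_{r,k}$ by a bounded sum of the $\|f\chi_{Q_{r,k'}}\|_{L^{p(\cdot)}}$, and the reverse.

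Next I would handle the normalizing power. For the $L^q$ side, write the integral over $\R^n$ as a sum over the cubes $Q_{r,k}$: since on $Q_{r,k}$ the quantity $\|f\chi_{B(\cdot,r)}\|_{L^{p(\cdot)}}$ is comparable (up to the covering constants above) to $\sum_{|k'-k|_\infty\le 1}\|f\chi_{Q_{r,k'}}\|_{L^{p(\cdot)}}$, which is essentially constant in $x\in Q_{r,k}$, integrating the $q$-th power over $Q_{r,k}$ contributes a factor $|Q_{r,k}|=r^n$. Thus
\[
\left\|\|f\chi_{B(\cdot,r)}\|_{L^{p(\cdot)}}\right\|_{L^{q}}^q\sim r^n\sum_{k\in\mathbb{Z}^n}\left(\sum_{|k'-k|_\infty\le 1}\|f\chi_{Q_{r,k'}}\|_{L^{p(\cdot)}}\right)^q\sim r^n\sum_{k\in\mathbb{Z}^n}\|f\chi_{Q_{r,k}}\|_{L^{p(\cdot)}}^q,
\]
where the last step uses that each $k'$ appears in boundedly many shells and the $\ell^q$ (quasi-)triangle inequality; this produces exactly the factor $r^{n/q}$ already isolated. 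It remains to match $r^{-N_{p,r}}$ with $r^{-n/p(\cdot)}$ inside the norm, and this is where Lemma \ref{l1.3} enters: for $r>1$ one has $\|\chi_{Q_{r,k}}\|_{L^{p(\cdot)}}\sim r^{n/p(\infty)}$ uniformly, while the weight $|B(\cdot,r)|^{-1/p(\cdot)}=r^{-n/p(\cdot)}$ also behaves like $r^{-n/p(\infty)}$ in the relevant range once paired with $\|f\chi_{B(\cdot,r)}\|_{L^{p(\cdot)}}$; combined with the elementary bounds $r^{-n/p_-}\le r^{-n/p(x)}\le r^{-n/p^+}$ for $r>1$ and the reverse for $r\le1$, the single exponent $N_{p,r}$ is seen to be equivalent to the pointwise $n/p(\cdot)$ up to constants depending only on $p_-,p^+,n$. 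For $r\le 1$ the cubes are small, so Lemma \ref{l1.3} gives $\|\chi_{Q_{r,k}}\|_{L^{p(\cdot)}}\sim r^{n/p(x)}$ for $x\in Q_{r,k}$ and the matching is direct.

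The main obstacle I expect is precisely this last point: reconciling the \emph{constant} exponent $N_{p,r}\in\{n/p_-,n/p^+\}$ appearing on the cube side with the \emph{variable} exponent $n/p(\cdot)$ in the integral on the ball side, without losing the equivalence. The resolution is that on any fixed cube $Q_{r,k}$ with $r\le 1$ the oscillation of $p(\cdot)$ is controlled by the log-Hölder condition \eqref{1}, so $r^{n/p(x)}\sim r^{n/p^+}$ with constants independent of $k$ and $r$ (this is essentially the content of Lemma \ref{l1.3}), and for $r>1$ one uses $p(\infty)$ and \eqref{2} similarly; the global summation over $k$ then only costs the harmless factor $r^{n/q}$ isolated above. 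Once these comparisons are in place, chaining them with Lemma \ref{l3.1} to absorb the covering radius $C=\sqrt n$ yields both inequalities, and the proof is complete. A secondary technical point is justifying that $\|f\chi_{B(\cdot,r)}\|_{L^{p(\cdot)}}$ is, up to constants, constant on each $Q_{r,k}$; this follows from the inclusions $B(x,r)\subset\bigcup_{|k'-k|_\infty\le1}Q_{r,k'}\subset B(x',Cr)$ valid for all $x,x'\in Q_{r,k}$, combined once more with Lemma \ref{l3.1}.
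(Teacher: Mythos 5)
Your overall strategy is the same as the paper's: compare each ball $B(x,r)$ with the boundedly many cubes $Q_{r,k}$ it meets (and conversely), use Lemma \ref{l3.1} to absorb the dilation of the radius, and evaluate the $L^q$-norm of the resulting piecewise-constant function to produce the factor $r^{n/q}$ and the $\ell^q$-sum. That part of your argument is sound and matches the paper's proof of Lemma \ref{l3.2} essentially step for step.

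The gap is in the exponent-matching step, and it is a real one. You claim that for $r\le 1$ one has $r^{n/p(x)}\sim r^{n/p^+}$ with constants independent of $x$ and $r$, calling this ``essentially the content of Lemma \ref{l1.3}''. Lemma \ref{l1.3} says $\|\chi_Q\|_{L^{p(\cdot)}}\sim|Q|^{1/p(x)}$ for $x\in Q$ when $|Q|$ is small; it compares the norm with the \emph{variable} power $|Q|^{1/p(x)}$, and the log-H\"older conditions (1)--(2) only control the oscillation of $p$ inside one small cube and its decay at infinity. They give no control on $|p(x)-p^+|$ or $|p(x)-p_-|$ globally, and indeed $r^{n/p(x)-n/p^+}\to\infty$ as $r\to 0$ at any point where $p(x)<p^+$ (similarly $r^{n/p(x)-n/p_-}\to\infty$ as $r\to\infty$ where $p(x)>p_-$). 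Thus the claimed uniform two-sided comparison of $r^{-n/p(\cdot)}$ with the single constant power $r^{-N_{p,r}}$ fails: with the stated choice of $N_{p,r}$ only the inequality $r^{-N_{p,r}}\le r^{-n/p(x)}$ holds, and the reverse inequality would require replacing $N_{p,r}$ by the opposite extreme of $n/p$. Your elementary bounds $r^{-n/p_-}\le r^{-n/p(x)}\le r^{-n/p^+}$ for $r>1$ (and their reverses for $r\le 1$) are correct but one-sided for each fixed constant exponent, so they cannot close a two-sided equivalence with constants independent of $r$. To be fair, the paper's own proof glosses over exactly the same point --- its displayed inequality converting the weight $r^{-n/p(\cdot)}$ into $r^{-N_{p,r}}$ is likewise valid only in one direction --- so you have not missed an idea that the paper actually supplies; but as written your resolution of what you correctly identify as ``the main obstacle'' does not work.
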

\begin{proof}
By the Lemma \ref{l3.1}, we only need show that\par
$$\left\|r^{\frac{n}{\alpha}-N_{p,r}}\left\{\|f\chi_{Q_{r,k}}\|_{L^{p(\cdot)}}\right\}_{k\in\mathbb{Z}^n}\right\|_{\ell^{q}}\sim \left\|r^{\frac{n}{\alpha}-\frac{n}{p(\cdot)}-\frac{n}{q}}\|f\chi_{B(\cdot,2\sqrt{n}r)}\|_{L^{p(\cdot)}}\right\|_{L^{q}}.$$\par
For any given $x\in\R^n$, we let $A_x:=\{k\in\mathbb{Z}^n:Q_{r,k}\cap B(x,2\sqrt{n}r)\neq\emptyset\}.$\par
Then the cardinality of $A_x$ is finite and $x\in B(rk,4\sqrt{n}r)$ for any $k\in A_x$. Thus,
\begin{align*}
\|f\chi_{B(x,2\sqrt{n}r)}\|_{L^{p(\cdot)}}&\le\left\|\sum_{k\in A_x}f\chi_{Q_{r,k}}\right\|_{L^{p(\cdot)}}
\le\sum_{k\in A_x}\left\|f\chi_{Q_{r,k}}\right\|_{L^{p(\cdot)}}
\le\sum_{k\in \mathbb{Z}^n}\left\|f\chi_{Q_{r,k}}\right\|_{L^{p(\cdot)}}\chi_{B(r k,4\sqrt{n}r)}(x).
\end{align*}\par
Taking $L^{q}$-norm on $x$, we have
$$\left\|r^{\frac{n}{\alpha}-\frac{n}{p(\cdot)}-\frac{n}{q}}\|f\chi_{B(\cdot,2\sqrt{n}r)}\|_{L^{p(\cdot)}}\right\|_{L^{q}}
\le\left\|r^{\frac{n}{\alpha}-\frac{n}{p(\cdot)}-\frac{n}{q}}\sum_{k\in \mathbb{Z}^n}\left\|f\chi_{Q_{r,k}}\right\|_{L^{p(\cdot)}}\chi_{B(r k,4\sqrt{n}r)}\right\|_{L^{q}}.$$\par
By the similar argument of Lemma \ref{l3.1}, there exist $N\in\mathbb{N}$ and $\{k_1,k_2,\cdots,k_N\}$, such that
$$B(0,4\sqrt{n}r)\subset\bigcup_{j=1}^{N}Q_{r,k_j},$$
where $N$ is independent of $r$ and $N\sim 1$.
According to the translation invariance of the Lebesgue measure, it follows that
\begin{align*}
\left\|r^{\frac{n}{\alpha}-\frac{n}{p(\cdot)}-\frac{n}{q}}\|f\chi_{B(\cdot,2\sqrt{n}r)}\|_{L^{p(\cdot)}}\right\|_{L^{q}}
&\le\left\|r^{\frac{n}{\alpha}-\frac{n}{p(\cdot)}-\frac{n}{q}}\sum_{j=1}^N\sum_{k\in \mathbb{Z}^n}\left\|f\chi_{Q_{r,k}}\right\|_{L^{p(\cdot)}}\chi_{Q_{r,k_j+k}}\right\|_{L^{q}}\\
&\lesssim r^{\frac{n}{\alpha}-N_{p,r}} \left\|\left\{\left\|f\chi_{Q_{r,k}}\right\|_{L^{p(\cdot)}}\right\}_{k\in\mathbb{Z}^n}\right\|_{\ell^{q}}.
\end{align*}\par
Indeed, the last inequality is obtained by the following fact that
\begin{align*}
\left(\int_{\R}\left| r^{\frac{n}{\alpha}-\frac{n}{p(\cdot)}-\frac{n}{q}}\sum_{k\in\mathbb{Z}^n}C_k\chi_{rk+(0,r]^n}(x)\right|^{q}dx\right)^{\frac{1}{q}}
&\lesssim r^{\frac{n}{\alpha}-N_{p,r}-\frac{n}{q}} \left(\int_{\R}\left|\sum_{k\in\mathbb{Z}^n}C_k\chi_{I_{k}}(x)\right|^{q}dx\right)^{\frac{1}{q}}\\
&\leq r^{\frac{n}{\alpha}-N_{p,r}-\frac{n}{q}}\left(\sum_{k\in\mathbb{Z}^n}\int_{I_{k}}\left|C_k\right|^{q}dx\right)^{\frac{1}{q}}\\
&\leq r^{\frac{n}{\alpha}-N_{p,r}}\left(\sum_{k\in\mathbb{Z}^n}\left|C_k\right|^{q}dx\right)^{\frac{1}{q}},
\end{align*}
where $C_k=\left\|f\chi_{Q_{r,k}}\right\|_{L^{p(\cdot)}}$ and $I_{k}=rk+[0,r)^n$$(k\in \mathbb{Z}^n)$. Thus, we prove that
$$\left\|r^{\frac{n}{\alpha}-\frac{n}{p(\cdot)}-\frac{n}{q}}\|f\chi_{B(\cdot,2\sqrt{n}r)}\|_{L^{p(\cdot)}}\right\|_{L^{q}}
\lesssim r^{\frac{n}{\alpha}-N_{p,r}} \left\|\left\{\left\|f\chi_{Q_{r,k}}\right\|_{L^{p(\cdot)}}\right\}_{k\in\mathbb{Z}^n}\right\|_{\ell^{q}}.$$\par
For the opposite inequality, it is obvious that
$$r^{\frac{n}{q}}\left\|\left\{\left\|f\chi_{Q_{r,k}}\right\|_{L^{p(\cdot)}}\right\}_{k\in\mathbb{Z}^n}\right\|_{\ell^{q}}
=\left\|\sum_{k\in\mathbb{Z}^n}\left\|f\chi_{Q_{r,k}}\right\|_{L^{p(\cdot)}}\chi_{Q_{r,k}}\right\|_{L^{q}}.$$
By $Q_{r,k}\subset B(x,2\sqrt{n}r)$ for $x\in Q_{r,k}$, we have
\begin{align*}
r^{\frac{n}{\alpha}-N_{p,r}}\left\|\left\{\left\|f\chi_{Q_{r,k}}\right\|_{L^{p(\cdot)}}\right\}_{k\in\mathbb{Z}^n}\right\|_{\ell^{q}}
&=
\left\|r^{\frac{n}{\alpha}-N_{p,r}-\frac{n}{q}}\sum_{k\in\mathbb{Z}^n}\left\|f\chi_{Q_{r,k}}\right\|_{L^{p(\cdot)}}\chi_{Q_{r,k}}\right\|_{L^{q}}\\
&\lesssim\left\|r^{\frac{n}{\alpha}-\frac{n}{p(\cdot)}-\frac{n}{q}}\left\|f\chi_{B(\cdot,2\sqrt{n}r)}\right\|_{L^{p(\cdot)}}\right\|_{L^{q}}.
\end{align*}\par
The proof is completed.
\end{proof}

By Lemma \ref{l3.2}, the proof of Proposition \ref{p3.1} is easy.
\begin{proof}[Proof of Proposition \ref{p3.1}]
According to the Lemma \ref{l3.2}, we obtain that
$$\sup_{r>0}\left\|r^{\frac{n}{\alpha}-N_{p,r}}\left\{\|f\chi_{Q_{r,k}}\|_{L^{p(\cdot)}}\right\}_{k\in\mathbb{Z}^n}\right\|_{\ell^{q}}\sim \sup_{r>0}\left\||B(\cdot,r)|^{\frac{1}{\alpha}-\frac{1}{p(\cdot)}-\frac{1}{q}}\|f\chi_{B(\cdot,r)}\|_{L^{p(\cdot)}}\right\|_{L^{q}},$$
and if $\rm r=1$, we have
$$\left\|\left\{\|f\chi_{Q_{1,k}}\|_{L^{p(\cdot)}}\right\}_{k\in\mathbb{Z}^n}\right\|_{\ell^{q}}\sim \left\|\|f\chi_{B(\cdot,1)}\|_{L^{p(\cdot)}}\right\|_{L^{q}}.$$

Thus, we prove proposition \ref{p3.1}.
\end{proof}\par

According to Proposition \ref{p3.1}, we give the definition of the pre-dual of variable Fofana's spaces $(L^{p(\cdot)},L^{q})^\alpha(\mathbb{R}^n)$.
\begin{definition}\label{d3.1}
Let $p(\cdot)\in \mathcal{P}(\R^n), 1\leq q, \alpha\leq\infty$ and $p(\cdot)\leq \alpha \leq q$. The space $\mathcal{H}(p(\cdot)',q',\alpha')$ is defined as the set of all elements of $L^{p(\cdot)}_{loc}(\R^n)$ for which there exist a sequence $\{(c_j,r_j,f_j)\}_{j\ge 1}$ of elements of $\mathbb{C}\times(0,\infty)\times(L^{p(\cdot)'},\ell^{q'})(\R^n)$ such that
\begin{equation}\label{3}
f:=\sum_{j\ge 1}c_j St_{r_j}^{(\alpha')}(f_j)\text{ in the sense of }L^{p(\cdot)}_{loc}(\R^n);
\end{equation}
\begin{equation}\label{4}
\|f_j\|_{p(\cdot)',q'}\le 1,j\ge 1;
\end{equation}
\begin{equation}\label{5}
\sum_{j\geq 1}|c_j|<\infty.
\end{equation}
\end{definition}

We will always refer to any sequence $\{(c_j,r_j,f_j)\}_{j\ge 1}$ of elements of $\mathbb{C}\times(0,\infty)\times(L^{p(\cdot)'},\ell^{q'})(\R^n)$ satisfying (\ref{3})-(\ref{5}) as block decomposition of $f$. For any element $f$ of $\mathcal{H}(p(\cdot)',q',\alpha')$, we set
$$\|f\|_{\mathcal{H}(p(\cdot)',q',\alpha')}:=\inf\left\{\sum_{j\ge 1}|c_j|:f:=\sum_{j\ge 1}c_j St_{r_j}^{(\alpha')}f_j\right\},$$
where the infimum is taken over all block decomposition of $f$.\par
Now, we discuss the properties of the dilation operator $St_{r}^{(\alpha)}:f\mapsto r^{-\frac{n}{\alpha}}f(r^{-1}\cdot)$ for $0<\alpha<\infty$ and $0<r<\infty$. By direct computation, we have the following properties.
\begin{proposition}\label{p3.2}
Let $f\in L^{p(\cdot)}_{loc}(\R^n)$, $0<\alpha<\infty$, and $0<r<\infty$.
\begin{itemize}
\item[(\romannumeral1)] $St_{r}^{(\alpha)}$ maps $L^{p(\cdot)}_{loc}(\R^n)$ into itself.
\item[(\romannumeral2)] $f=St_{1}^{(\alpha)}(f)$.
\item[(\romannumeral3)] $St_{r_1}^{(\alpha)}\circ St_{r_2}^{(\alpha)}=St_{r_2}^{(\alpha)}St_{r_2}^{(\alpha)}=St_{r_1r_2}^{(\alpha)}$.
\item[(\romannumeral4)] $\sup_{r>0}\|St_{r}^{(\alpha)}(f)\|_{p(\cdot),q}=\|f\|_{p(\cdot),q,\alpha}$, where $p(\cdot)\in \mathcal{P}(\R^n), 1\leq q, \alpha\leq\infty$ and $p(\cdot)\leq \alpha \leq q$.
\end{itemize}
\end{proposition}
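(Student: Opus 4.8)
\textbf{Proof proposal for Proposition \ref{p3.2}.}

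The plan is to verify the four items in order, since each is a short computation from the definition $St_{r}^{(\alpha)}f = r^{-n/\alpha}f(r^{-1}\cdot)$, and the only substantive content is item (\romannumeral4). For (\romannumeral1), I would note that if $F\subset\R^n$ is compact, then $r^{-1}F$ is compact, and by the change of variables $y=r^{-1}x$ together with the scaling behaviour of the Luxemburg--Nakano norm one sees $\|St_r^{(\alpha)}f\cdot\chi_F\|_{L^{p(\cdot)}}<\infty$ whenever $f\in L^{p(\cdot)}_{loc}(\R^n)$; hence $St_r^{(\alpha)}$ preserves $L^{p(\cdot)}_{loc}(\R^n)$. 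Item (\romannumeral2) is immediate: $St_1^{(\alpha)}f = 1^{-n/\alpha}f(\cdot)=f$. Item (\romannumeral3) follows by substituting the definition twice: $St_{r_1}^{(\alpha)}(St_{r_2}^{(\alpha)}f)(x) = r_1^{-n/\alpha}(St_{r_2}^{(\alpha)}f)(r_1^{-1}x) = r_1^{-n/\alpha}r_2^{-n/\alpha}f(r_2^{-1}r_1^{-1}x) = (r_1r_2)^{-n/\alpha}f((r_1r_2)^{-1}x) = St_{r_1r_2}^{(\alpha)}f(x)$, and symmetry in $r_1,r_2$ gives the stated commutativity. (I would also remark that the ``$St_{r_2}^{(\alpha)}St_{r_2}^{(\alpha)}$'' in the displayed line should read $St_{r_2}^{(\alpha)}St_{r_1}^{(\alpha)}$.)

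The heart of the matter is (\romannumeral4). Here I would compute $\|St_r^{(\alpha)}f\|_{p(\cdot),q}$ directly from the discrete norm of Proposition \ref{p3.1}, namely $\|g\|_{p(\cdot),q}=\big\|\{\|g\chi_{Q_{1,k}}\|_{L^{p(\cdot)}}\}_{k\in\mathbb{Z}^n}\big\|_{\ell^q}$. Writing $g=St_r^{(\alpha)}f$, the change of variables $y=r^{-1}x$ shows $g\chi_{Q_{1,k}}$ corresponds, after rescaling, to $f\chi_{Q_{r,k}}$; more precisely $\|St_r^{(\alpha)}f\cdot\chi_{Q_{1,k}}\|_{L^{p(\cdot)}}$ equals $r^{-n/\alpha}$ times the $L^{p(\cdot)}$ norm of $f(r^{-1}\cdot)\chi_{Q_{1,k}}$, and the dilation-scaling of the Luxemburg norm converts the latter into $r^{\,?}\|f\chi_{Q_{r,k}}\|_{L^{p(\cdot)}}$, where the exponent is governed exactly by the quantity $N_{r,p}$ appearing in Proposition \ref{p3.1} (this is precisely why $N_{r,p}$ was introduced, since for variable exponents the scaling $\|h(r^{-1}\cdot)\|_{L^{p(\cdot)}}$ is not a clean power of $r$ but is controlled above and below by $r^{N_{r,p}}$ via Lemma \ref{l1.3}). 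Summing in $k$ in $\ell^q$ then yields $\|St_r^{(\alpha)}f\|_{p(\cdot),q}\sim r^{\frac{n}{\alpha}-N_{r,p}}\,{_r\|f\|_{p(\cdot),q}}$, and taking the supremum over $r>0$ gives $\sup_{r>0}\|St_r^{(\alpha)}f\|_{p(\cdot),q}\sim\|f\|_{p(\cdot),q,\alpha}$ by the very definition of the discrete Fofana norm.

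The main obstacle is the scaling identity for the variable Luxemburg norm under dilation: unlike the constant-exponent case, $\|f(r^{-1}\cdot)\|_{L^{p(\cdot)}(\R^n)}$ is not exactly $r^{n/p}\|f\|_{L^{p(\cdot)}}$, so one cannot get a literal equality in (\romannumeral4) but only the equivalence ``$\sim$'' as stated. I would handle this by restricting attention to the localized pieces $f\chi_{Q_{r,k}}$, on which Lemma \ref{l1.3} pins down $\|\chi_{Q_{r,k}}\|_{L^{p(\cdot)}}$ up to constants by $r^{N_{r,p}}$ (the split $r>1$ versus $r\le 1$ in $N_{r,p}$ matching the two cases of Lemma \ref{l1.3}), and then comparing $\|St_r^{(\alpha)}f\cdot\chi_{Q_{1,k}}\|_{L^{p(\cdot)}}$ with $r^{n/\alpha-N_{r,p}}\|f\chi_{Q_{r,k}}\|_{L^{p(\cdot)}}$ through the modular inequality defining the Luxemburg norm. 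Everything else is bookkeeping with translation invariance of Lebesgue measure and the $\ell^q$ norm, already carried out in the proof of Lemma \ref{l3.2}.
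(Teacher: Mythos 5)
The paper gives no proof of Proposition \ref{p3.2} at all (it is dismissed with ``by direct computation''), so there is nothing of the authors' to compare your route against. Your verifications of (\romannumeral1)--(\romannumeral3) are correct and complete, and you are right that the displayed composition law contains a typo.

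For (\romannumeral4), however, there is a genuine gap exactly at the step you yourself call ``the main obstacle,'' and your proposed repair does not close it. You claim $\|St_r^{(\alpha)}f\cdot\chi_{Q_{1,k}}\|_{L^{p(\cdot)}}\sim r^{\frac{n}{\alpha}-N_{r,p}}\|f\chi_{Q_{r,k}}\|_{L^{p(\cdot)}}$ and propose to get it from Lemma \ref{l1.3} plus the modular inequality. First, a bookkeeping error: the substitution $y=r^{-1}x$ carries $Q_{1,k}=k+[0,1)^n$ to $Q_{r^{-1},k}$, not to $Q_{r,k}$, so the correct pairing is with $f\chi_{Q_{1/r,k}}$ (repairable by replacing $r$ with $r^{-1}$ before taking the supremum, which also flips the $r>1$ versus $r\le 1$ cases in $N_{r,p}$). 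Second, and fatally for the argument as written, the substitution turns the modular $\int(|f(r^{-1}x)|/\lambda)^{p(x)}dx$ into $r^{n}\int(|f(y)|/\lambda)^{p(ry)}dy$: the exponent function itself is dilated. Lemma \ref{l1.3} controls $\|\chi_Q\|_{L^{p(\cdot)}}$ only for characteristic functions and says nothing about comparing $\|g\|_{L^{p(r\cdot)}}$ with $\|g\|_{L^{p(\cdot)}}$ for a general $g=f\chi_{Q_{r,k}}$; under the sole hypothesis $p(\cdot)\in\mathcal{P}(\R^n)$ (no log-H\"{o}lder continuity is assumed in (\romannumeral4)) the exponents $p(ry)$ and $p(y)$ at the same point are unrelated, and the crude pointwise bounds $(|g|/\lambda)^{p^{+}}$ and $(|g|/\lambda)^{p_{-}}$ sandwich the modular only between the $L^{p^{+}}$ and $L^{p_{-}}$ modulars, which are inequivalent norms. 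What this machinery actually yields is a pair of one-sided inequalities with the exponents $n/p_{-}$ and $n/p^{+}$ landing on opposite sides, i.e.\ at best $\sup_{r>0}\|St_r^{(\alpha)}f\|_{p(\cdot),q}\lesssim\|f\|_{p(\cdot),q,\alpha}$ (or the reverse, depending on how $N_{r,p}$ is matched to the case split), not the two-sided equivalence and certainly not the literal equality the proposition asserts. To make (\romannumeral4) a theorem you would need either to impose the log-H\"{o}lder conditions (\ref{1})--(\ref{2}) so that $p(r\cdot)$ and $p(\cdot)$ give uniformly comparable Luxemburg norms on cubes of side $r$, or to restate (\romannumeral4) as the one inequality that is actually invoked in the proof of Theorem \ref{t3.1}; your sketch should make explicit which of these it is doing.
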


Proposition \ref{p3.2} and Definition \ref{d3.1} prove the following result.

\begin{proposition}\label{p3.3}
Let $p(\cdot)\in \mathcal{P}(\R^n)$, $1\le q, \alpha\le\infty$, and $p(\cdot)\leq \alpha \leq q$. $(L^{p(\cdot)'},\ell^{q'})(\R^n)$ is a dense subspace of $\mathcal{H}(p(\cdot)',q',\alpha')$.
\end{proposition}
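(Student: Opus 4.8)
The plan is to show two things: first, that $(L^{p(\cdot)'},\ell^{q'})(\R^n)$ embeds into $\mathcal{H}(p(\cdot)',q',\alpha')$, and second, that this copy is dense in $\mathcal{H}(p(\cdot)',q',\alpha')$ with respect to $\|\cdot\|_{\mathcal{H}(p(\cdot)',q',\alpha')}$.

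For the embedding, given $g\in(L^{p(\cdot)'},\ell^{q'})(\R^n)$ with $g\neq 0$, I would exhibit the trivial block decomposition: set $c_1=\|g\|_{p(\cdot)',q'}$, $r_1=1$, and $f_1=g/\|g\|_{p(\cdot)',q'}$, and $c_j=0$ for $j\ge 2$. By Proposition \ref{p3.2}(\romannumeral2) we have $St_1^{(\alpha')}(f_1)=f_1$, so $g=c_1 St_{r_1}^{(\alpha')}(f_1)$; moreover $\|f_1\|_{p(\cdot)',q'}=1\le 1$ and $\sum_j|c_j|=\|g\|_{p(\cdot)',q'}<\infty$. Hence $g\in\mathcal{H}(p(\cdot)',q',\alpha')$ and $\|g\|_{\mathcal{H}(p(\cdot)',q',\alpha')}\le\|g\|_{p(\cdot)',q'}$, so the inclusion is continuous.

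For density, take an arbitrary $f\in\mathcal{H}(p(\cdot)',q',\alpha')$ with block decomposition $\{(c_j,r_j,f_j)\}_{j\ge 1}$ satisfying (\ref{3})--(\ref{5}), and fix $\varepsilon>0$. By (\ref{5}) the series $\sum_{j\ge 1}|c_j|$ converges, so there is $J$ with $\sum_{j>J}|c_j|<\varepsilon$. Set $g_J:=\sum_{j=1}^{J}c_j St_{r_j}^{(\alpha')}(f_j)$, a \emph{finite} sum. Each $St_{r_j}^{(\alpha')}(f_j)$ lies in $(L^{p(\cdot)'},\ell^{q'})(\R^n)$ because applying Proposition \ref{p3.2}(\romannumeral4) with $p(\cdot)'\le\alpha'\le q'$ (the dual inequalities follow from $p(\cdot)\le\alpha\le q$) gives $\|St_{r_j}^{(\alpha')}(f_j)\|_{p(\cdot)',q'}\le\|f_j\|_{p(\cdot)',q',\alpha'}$, which is finite since $\|f_j\|_{p(\cdot)',q'}\le 1$ and the discrete Fofana norm dominates the discrete amalgam norm at scale $r=1$; being a finite linear combination, $g_J\in(L^{p(\cdot)'},\ell^{q'})(\R^n)$. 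The difference $f-g_J=\sum_{j>J}c_j St_{r_j}^{(\alpha')}(f_j)$ is itself a block decomposition (of $f-g_J$), whence $\|f-g_J\|_{\mathcal{H}(p(\cdot)',q',\alpha')}\le\sum_{j>J}|c_j|<\varepsilon$. Thus $g_J$ is the desired approximant.

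The main obstacle, and the only real subtlety, is making rigorous the claim that each $St_{r_j}^{(\alpha')}(f_j)$ genuinely belongs to the amalgam space $(L^{p(\cdot)'},\ell^{q'})(\R^n)$ — one must be careful that the dilation, which changes the scale of the covering balls, keeps the $\ell^{q'}$-summability intact; this is exactly what Proposition \ref{p3.2}(\romannumeral4) together with Proposition \ref{p3.1} guarantees, so I would simply invoke those. A secondary point is checking that the convergence in (\ref{3}) is compatible with taking the finite truncation $g_J$, i.e.\ that $f-g_J=\sum_{j>J}c_j St_{r_j}^{(\alpha')}(f_j)$ holds in $L^{p(\cdot)}_{loc}(\R^n)$, which is immediate from linearity of the $L^{p(\cdot)}_{loc}$-convergence in (\ref{3}). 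Everything else is bookkeeping.
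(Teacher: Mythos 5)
Your argument follows the paper's proof step for step: the continuous embedding comes from the trivial one--block decomposition $g=\|g\|_{p(\cdot)',q'}\,St_{1}^{(\alpha')}\bigl(g/\|g\|_{p(\cdot)',q'}\bigr)$, giving $\|g\|_{\mathcal{H}(p(\cdot)',q',\alpha')}\le\|g\|_{p(\cdot)',q'}$, and density comes from truncating an arbitrary block decomposition and bounding the tail by $\sum_{j>J}|c_j|$. That skeleton is exactly the paper's and is sound.

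The gap is in the step you yourself flag as the only real subtlety. To show $St_{r_j}^{(\alpha')}(f_j)\in(L^{p(\cdot)'},\ell^{q'})(\R^n)$ you write $\|St_{r_j}^{(\alpha')}(f_j)\|_{p(\cdot)',q'}\le\|f_j\|_{p(\cdot)',q',\alpha'}$ (correct, by Proposition \ref{p3.2}(iv)) and then claim the right-hand side is finite ``since $\|f_j\|_{p(\cdot)',q'}\le 1$ and the discrete Fofana norm dominates the discrete amalgam norm.'' That inference runs backwards: if the Fofana norm dominates the amalgam norm, then finiteness of the amalgam norm gives no control on the Fofana norm. In general $(L^{p(\cdot)'},\ell^{q'})^{\alpha'}(\R^n)$ is a proper subspace of $(L^{p(\cdot)'},\ell^{q'})(\R^n)$, and a block $f_j$ subject only to condition (\ref{4}) need not lie in it, so $\|f_j\|_{p(\cdot)',q',\alpha'}$ may well be infinite and your bound becomes vacuous. (A smaller slip in the same sentence: conjugation reverses the inequalities, so $p(\cdot)\le\alpha\le q$ yields $q'\le\alpha'\le p(\cdot)'$, not $p(\cdot)'\le\alpha'\le q'$.) The correct justification is the fixed-scale dilation property of amalgam spaces already quoted in the introduction: for each \emph{fixed} $r>0$, $St_{r}^{(\alpha')}$ maps $(L^{p(\cdot)'},\ell^{q'})(\R^n)$ into itself, with a bound depending on $r$ but not required to be uniform in $r$. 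One obtains this by noting that the scale-$1$ discrete amalgam norm of $St_{r}^{(\alpha')}f$ is controlled by a scale-$r^{-1}$ discrete amalgam norm of $f$, and then comparing amalgam norms at two fixed scales via the covering argument of Lemmas \ref{l3.1} and \ref{l3.2}. The paper asserts the membership of the partial sums without proof; your attempt to prove it does not work as written and should be replaced by this fixed-$r$ bound.
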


\begin{proof}
First we verify that $(L^{p(\cdot)'},\ell^{q'})(\R^n)$ is continuously embedded into $\mathcal{H}(p(\cdot)',q',\alpha')$. For any $f\in(L^{p(\cdot)'},\ell^{q'})(\R^n)$, we have
\begin{equation}\label{6}
f=\|f\|_{p(\cdot)',q'}St_1^{\alpha}(\|f\|^{-1}_{p(\cdot)',q'}f)
\end{equation}
and
$$\left\|\|f\|^{-1}_{p(\cdot)',q'}f\right\|_{p(\cdot)',q'}=1.$$\par
Thus, $f\in\mathcal{H}(p(\cdot)',q',\alpha')$ and satisfies
\begin{equation}\label{7}
\|f\|_{\mathcal{H}(p(\cdot)',q',\alpha')}\le\|f\|_{p(\cdot)',q'}.
\end{equation}

Let us show the denseness of $(L^{p(\cdot)'},\ell^{q'})(\R^n)$ in $\mathcal{H}(p(\cdot)',q',\alpha')$. It is clear that if $\{(c_j,r_j,f_j)\}_{j\ge 1}$ is a block decomposition of $f\in\mathcal{H}(p(\cdot)',q',\alpha')$, then
$\left\{\sum_{j=1}^Jc_j St_{r_j}^{(\alpha')}(f_j)\right\}_{J\ge 1}$ is a sequence of elements of $(L^{p(\cdot)'},\ell^{q'})(\R^n)$ and
$$\left\|f-\sum_{j=1}^{J}c_jSt_{r_j}^{(\alpha')}(f_j)\right\|_{\mathcal{H}(p(\cdot)',q',\alpha')} =\left\|\sum_{j=J+1}^{\infty}c_jSt_{r_j}^{(\alpha')}(f_j)\right\|_{\mathcal{H}(p(\cdot)',q',\alpha')}\le\sum_{j=J+1}^{\infty}|c_j|\rightarrow 0~~(J\rightarrow\infty).$$\par
Thus, $(L^{p(\cdot)'},\ell^{q'})(\R^n)$ is a dense subspace of $\mathcal{H}(p(\cdot)',q',\alpha')$.
\end{proof}

\begin{theorem}\label{t3.1}
\begin{itemize}
\item[(\romannumeral1)] Let $p(\cdot)\in \mathcal{P}(\R^n), 1\leq q, \alpha\leq\infty$ and $p(\cdot)\leq \alpha \leq q$. If $g\in(L^{p(\cdot)},\ell^{q})^\alpha(\R^n)$ and $f\in\mathcal{H}(p(\cdot)',q',\alpha')$, we obtain $fg\in L^1(\R^n)$ and
\begin{equation}\label{8}
\left|\int_{\R^n}f(x)g(x)dx\right|\le\|g\|_{{p(\cdot)},{q},\alpha} \|f\|_{\mathcal{H}(p(\cdot)',q',\alpha')}.
\end{equation}
\item[(\romannumeral2)] Let $p(\cdot)\in \mathcal{P}(\R^n), 1\leq q, \alpha\leq\infty$ and $p(\cdot)\leq \alpha \leq q$. The operator $T:g\mapsto T_g$ defined as
$$<T_g,f>=\int_{\R^n}f(x)g(x)dx,~~g\in(L^{p(\cdot)},\ell^{q})^{\alpha}(\R^n)\text{ and }f\in\mathcal{H}{(p(\cdot)',q',\alpha')}$$
is an isomestric isomorphism of $(L^{p(\cdot)},\ell^{q})^{\alpha}(\R^n)$ into $\mathcal{H}{(p(\cdot)',q',\alpha')}^{*}$.
\end{itemize}
\end{theorem}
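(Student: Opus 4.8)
The plan is to derive both parts from a single change of variables. A direct computation with the substitution $y=r^{-1}x$, using $n(1-1/\alpha')=n/\alpha$, gives the \emph{dilation duality} identity
$$\int_{\R^n}St_{r}^{(\alpha')}(h)(x)\,g(x)\,dx=\int_{\R^n}h(y)\,St_{1/r}^{(\alpha)}(g)(y)\,dy,\qquad r>0,$$
valid whenever one (hence the other) side is absolutely convergent. The second ingredient is a generalized Hölder inequality on the discrete variable amalgam spaces: for $h\in(L^{p(\cdot)'},\ell^{q'})(\R^n)$ and $G\in(L^{p(\cdot)},\ell^{q})(\R^n)$,
$$\int_{\R^n}|h(x)G(x)|\,dx\lesssim\|h\|_{p(\cdot)',q'}\,\|G\|_{p(\cdot),q},$$
which I would prove by writing $\int_{\R^n}|hG|=\sum_{k\in\mathbb{Z}^n}\int_{Q_{1,k}}|hG|$, applying Lemma \ref{l1.2} (with exponent $p(\cdot)'$, so that $(p(\cdot)')'=p(\cdot)$) on each cube $Q_{1,k}$, and then applying the discrete Hölder inequality in $\ell^{q}$ to the sequences $\{\|h\chi_{Q_{1,k}}\|_{L^{p(\cdot)'}}\}_{k}$ and $\{\|G\chi_{Q_{1,k}}\|_{L^{p(\cdot)}}\}_{k}$.

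For part (i), fix a block decomposition $\{(c_j,r_j,f_j)\}_{j\ge1}$ of $f$ and combine the two ingredients with Proposition \ref{p3.2}(iv): for each $j$,
$$\int_{\R^n}\bigl|St_{r_j}^{(\alpha')}(f_j)\,g\bigr|\,dx=\int_{\R^n}\bigl|f_j\,St_{1/r_j}^{(\alpha)}(g)\bigr|\,dx\lesssim\|f_j\|_{p(\cdot)',q'}\,\bigl\|St_{1/r_j}^{(\alpha)}(g)\bigr\|_{p(\cdot),q}\le\|g\|_{p(\cdot),q,\alpha}.$$
Since the series $\sum_j c_jSt_{r_j}^{(\alpha')}(f_j)$ converges in $L^{p(\cdot)}_{loc}(\R^n)$, a subsequence of its partial sums converges to $f$ almost everywhere, so $|f|\le\sum_j|c_j|\,\bigl|St_{r_j}^{(\alpha')}(f_j)\bigr|$ a.e.; multiplying by $|g|$, integrating and using Tonelli's theorem with $\sum_j|c_j|<\infty$ shows $fg\in L^1(\R^n)$ and $\int_{\R^n}|fg|\lesssim\|g\|_{p(\cdot),q,\alpha}\sum_j|c_j|$. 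Dominated convergence then allows the pairing $\int_{\R^n}fg$ to be computed term by term, giving $\bigl|\int_{\R^n}fg\bigr|\le\|g\|_{p(\cdot),q,\alpha}\sum_j|c_j|$; taking the infimum over all block decompositions of $f$ yields \eqref{8}.

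For part (ii), $T$ is clearly linear, and \eqref{8} shows $T_g\in\mathcal{H}(p(\cdot)',q',\alpha')^{*}$ with $\|T_g\|\le\|g\|_{p(\cdot),q,\alpha}$. For injectivity, if $T_g=0$ then by Proposition \ref{p3.3} we get $\int_{\R^n}hg=0$ for every $h\in(L^{p(\cdot)'},\ell^{q'})(\R^n)$; testing against $h=\chi_{E}$ for bounded measurable $E$ — which belongs to $(L^{p(\cdot)'},\ell^{q'})(\R^n)$ — forces $g=0$ a.e. For the reverse norm inequality, fix $r>0$ and $\varepsilon>0$. The duality $(L^{p(\cdot)}(\R^n))^{*}\cong L^{p(\cdot)'}(\R^n)$ (see \cite{2}) together with $(\ell^{q})^{*}\cong\ell^{q'}$ yields the norming characterization $\|G\|_{p(\cdot),q}\lesssim\sup\bigl\{\bigl|\int_{\R^n}hG\bigr|:\|h\|_{p(\cdot)',q'}\le1\bigr\}$; applying it to $G:=St_{r}^{(\alpha)}(g)$ produces $h$ with $\|h\|_{p(\cdot)',q'}\le1$ and $\int_{\R^n}h\,St_{r}^{(\alpha)}(g)\gtrsim\|St_{r}^{(\alpha)}(g)\|_{p(\cdot),q}-\varepsilon$. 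Then $f:=St_{1/r}^{(\alpha')}(h)$ is a one-term block decomposition, so $\|f\|_{\mathcal{H}(p(\cdot)',q',\alpha')}\le1$, while the dilation duality identity gives $\langle T_g,f\rangle=\int_{\R^n}h\,St_{r}^{(\alpha)}(g)$. Letting $\varepsilon\to0$, taking the supremum over $r>0$ and using Proposition \ref{p3.2}(iv) yields $\|T_g\|\gtrsim\|g\|_{p(\cdot),q,\alpha}$; combined with the upper bound this shows that $T$ is isometric (see the remark below on the choice of norm).

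The main obstacle is the norming/duality property of the variable amalgam spaces used in the reverse inequality, namely the embedding $(L^{p(\cdot)},\ell^{q})(\R^n)\hookrightarrow\bigl((L^{p(\cdot)'},\ell^{q'})(\R^n)\bigr)^{*}$; it rests on the duality and reflexivity of the variable Lebesgue spaces (see \cite{2}), and some care with normalization is needed: the Luxemburg norm on $L^{p(\cdot)}$ and its associate norm differ by a bounded factor (recall the constant $r_p$ of Lemma \ref{l1.2}), so a genuinely isometric — as opposed to merely isomorphic — statement is obtained by fixing the convention under which Hölder's inequality carries constant one. A secondary, routine point is the interchange of the infinite sum with the integral defining the pairing $\int_{\R^n}fg$; as indicated above this is handled by the almost everywhere convergence of a subsequence of partial sums together with the $L^{1}$ domination established in part (i).
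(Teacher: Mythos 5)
Part (i) of your argument is essentially the paper's: the dilation duality $\int St_{r}^{(\alpha')}(h)\,g=\int h\,St_{1/r}^{(\alpha)}(g)$, the H\"older inequality on the discrete amalgam norms (Lemma \ref{4.1}(i)), termwise estimation of each block, and the infimum over block decompositions. Your handling of the interchange of the infinite sum with the integral (a.e.\ convergence of a subsequence of partial sums plus the $L^{1}$ domination) is in fact more careful than the paper's, which simply writes $fg=g\sum_{j}c_{j}St_{r_{j}}^{(\alpha')}(f_{j})\in L^{1}(\R^n)$ without comment.

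For part (ii) the routes diverge, and there is one substantive omission. Your lower bound $\|T_g\|\gtrsim\|g\|_{p(\cdot),q,\alpha}$, obtained by choosing near-norming functions $h$ for $St_{r}^{(\alpha)}(g)$ and testing $T_g$ against the one-block element $St_{1/r}^{(\alpha')}(h)$, is a correct and arguably cleaner alternative to the paper's derivation; both ultimately rest on the duality $\bigl((L^{p(\cdot)'},\ell^{q'})(\R^n)\bigr)^{*}\cong(L^{p(\cdot)},\ell^{q})(\R^n)$ of Lemma \ref{4.1}(ii). However, the paper's proof goes on to show that $T$ is a \emph{surjection} onto $\mathcal{H}(p(\cdot)',q',\alpha')^{*}$: given $T\in\mathcal{H}(p(\cdot)',q',\alpha')^{*}$, its restriction to the dense subspace $(L^{p(\cdot)'},\ell^{q'})(\R^n)$ (Proposition \ref{p3.3}) is represented by some $g\in(L^{p(\cdot)},\ell^{q})(\R^n)$, and the uniform bound $\|St_{r}^{(\alpha)}(g)\|_{p(\cdot),q}\le\|T\|$ for all $r>0$ upgrades $g$ to an element of $(L^{p(\cdot)},\ell^{q})^{\alpha}(\R^n)$ with $T=T_g$. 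Your proposal does not address this step at all. If one reads ``isometric isomorphism into'' strictly as an isometric embedding, your argument suffices for the literal statement; but the surjectivity is what makes $\mathcal{H}$ a genuine pre-dual, and it is what the paper actually proves, so you should either add the representation argument or state explicitly that you establish only the embedding. Your closing caveat about the constant $r_p$ is fair and applies equally to the paper, whose ``isometric'' claim is really an equivalence of norms up to such fixed constants.
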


Next, we will prove Theorem \ref{t3.1}, whose ideal comes from [14]. Before that, the dual of variable amalgam spaces $(L^{p(\cdot)},\ell^{q})(\R^n)$ will given as follows.
\begin{lemma}\label{4.1}
\begin{itemize}
\item[(\romannumeral1)] Let $p(\cdot)\in\mathbb{R}^n$, $1\le q\le\infty$. For $r\in(0,\infty)$, if $f\in(L^{p(\cdot)},\ell^{q})(\R^n)$ and $g\in(L^{p(\cdot)'},\ell^{q'})(\R^n)$, then $fg\in L^1(\R^n)$ and
\begin{equation}\label{9}
\|fg\|_1\lesssim{\|f\|_{p(\cdot),q}}\cdot{\|g\|_{p(\cdot)',q'}},~~f,g\in L_{loc}^{p(\cdot)}(\R^n).
\end{equation}
\item[(\romannumeral2)] Let $p(\cdot)\in \mathcal{P}(\R^n)$, $1\le q<\infty$. The dual of variable amalgam spaces $(L^{p(\cdot)},\ell^{q})(\R^n)$ is $(L^{p(\cdot)'},\ell^{q'})(\R^n)$.
\end{itemize}
\end{lemma}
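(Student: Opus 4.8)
The plan is to prove Lemma \ref{4.1} in two parts, first the pointwise pairing inequality \eqref{9} and then the duality identification. For part (i), I would start from the local structure: cover $\R^n$ by the unit cubes $Q_{1,k}$, $k\in\mathbb{Z}^n$, which are essentially disjoint. On each $Q_{1,k}$ apply the generalized H\"older inequality for variable Lebesgue spaces (Lemma \ref{l1.2}) to get $\int_{Q_{1,k}}|fg|\le r_p\|f\chi_{Q_{1,k}}\|_{L^{p(\cdot)}}\|g\chi_{Q_{1,k}}\|_{L^{p'(\cdot)}}$, then sum over $k$ and invoke the classical H\"older inequality on $\ell^q$--$\ell^{q'}$:
\begin{equation*}
\|fg\|_1=\sum_{k\in\mathbb{Z}^n}\int_{Q_{1,k}}|fg|\le r_p\sum_{k\in\mathbb{Z}^n}\|f\chi_{Q_{1,k}}\|_{L^{p(\cdot)}}\|g\chi_{Q_{1,k}}\|_{L^{p'(\cdot)}}\le r_p\|f\|_{p(\cdot),q}\|g\|_{p'(\cdot),q'}.
\end{equation*}
This gives \eqref{9} with an implicit constant depending only on $p_-,p^+$.

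For part (ii), I would show that the map $g\mapsto L_g$ with $L_g(f)=\int_{\R^n}fg$ is an isometric isomorphism from $(L^{p(\cdot)'},\ell^{q'})(\R^n)$ onto $\bigl((L^{p(\cdot)},\ell^q)(\R^n)\bigr)^*$. Boundedness of $L_g$ with $\|L_g\|\le r_p\|g\|_{p'(\cdot),q'}$ is immediate from \eqref{9}. The reverse estimate and surjectivity are the substantive part. Given $\Phi\in\bigl((L^{p(\cdot)},\ell^q)\bigr)^*$, I would localize: for each fixed $k$, the restriction of $\Phi$ to functions supported in $Q_{1,k}$ is a bounded functional on $L^{p(\cdot)}(Q_{1,k})$, so by the known duality $\bigl(L^{p(\cdot)}(Q_{1,k})\bigr)^*=L^{p'(\cdot)}(Q_{1,k})$ (from \cite{2}, available since $p(\cdot)\in\mathcal{P}$) there is $g_k\in L^{p'(\cdot)}(Q_{1,k})$ representing it; patching these together defines $g$ with $g\chi_{Q_{1,k}}=g_k$. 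To control $\|g\|_{p'(\cdot),q'}=\bigl\|\{\|g\chi_{Q_{1,k}}\|_{L^{p'(\cdot)}}\}_k\bigr\|_{\ell^{q'}}$ one tests $\Phi$ against a carefully chosen $f$: pick, for each $k$, a near-extremal $h_k\in L^{p(\cdot)}(Q_{1,k})$ with $\|h_k\|_{L^{p(\cdot)}}\le 1$ and $\int h_k g_k\ge(1-\varepsilon)\|g_k\|_{L^{p'(\cdot)}}$, then set $f=\sum_k \lambda_k h_k$ with scalars $\lambda_k\ge 0$ chosen so that $\{\lambda_k\|h_k\|_{L^{p(\cdot)}}\}_k$ is the $\ell^q$--normalized dual sequence to $\{\|g_k\|_{L^{p'(\cdot)}}\}_k$ (i.e. proportional to $\|g_k\|_{L^{p'(\cdot)}}^{q'-1}$, truncated to finitely many $k$ when $q=1$ or $q=\infty$ needs separate handling). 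Then $\|f\|_{p(\cdot),q}\lesssim\bigl(\sum_k(\lambda_k\|h_k\|)^q\bigr)^{1/q}$ and $\Phi(f)=\sum_k\lambda_k\int h_kg_k\gtrsim(1-\varepsilon)\|g\|_{p'(\cdot),q'}$, which on letting the truncation exhaust $\mathbb{Z}^n$ and $\varepsilon\to0$ yields $\|g\|_{p'(\cdot),q'}\le C\|\Phi\|$; combined with the upper bound this pins down the norm up to the constant $r_p$ and in fact one gets equality by optimizing the H\"older constant. Finally $\Phi=L_g$ because the two functionals agree on each $L^{p(\cdot)}(Q_{1,k})$ and hence, by density of finitely-supported sums $\sum_{k}f\chi_{Q_{1,k}}$ in $(L^{p(\cdot)},\ell^q)(\R^n)$ when $q<\infty$, on the whole space.

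The main obstacle I anticipate is the surjectivity/lower-bound step: correctly assembling the local representatives $g_k$ into a single locally integrable $g$ and then exhibiting the test function $f$ that simultaneously saturates the inner $L^{p(\cdot)}$ pairing and the outer $\ell^q$ pairing, while keeping $\|f\|_{p(\cdot),q}$ under control. Two technical points need care: the hypothesis $p(\cdot)\in\mathcal{P}(\R^n)$ (so $1<p_-\le p^+<\infty$) is essential to have reflexivity-type duality on each cube and to avoid the pathologies of $L^{p(\cdot)}$ when $p$ touches $1$ or $\infty$; and the case $q<\infty$ is needed for the density argument identifying $\Phi$ with $L_g$, which is exactly why part (ii) restricts to $1\le q<\infty$. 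The finite-truncation argument handles the possibility that $\{\|g_k\|_{L^{p'(\cdot)}}\}_k\notin\ell^{q'}$ a priori by first proving the bound for finitely supported $g$ and then passing to the limit via monotone convergence.
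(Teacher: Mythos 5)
Your part (i) coincides with the paper's: the paper disposes of \eqref{9} with the single phrase ``by H\"older's inequality,'' and your cube-by-cube application of Lemma \ref{l1.2} on the tiles $Q_{1,k}$ followed by the discrete $\ell^{q}$--$\ell^{q'}$ H\"older inequality is exactly what that one line unpacks to (with the harmless constant $r_p$). For part (ii), however, you take a genuinely different and more self-contained route. The paper obtains the duality by citing Holland's theorem for classical amalgams together with the Kov\'a\v{c}ik--R\'akosn\'ik duality $(L^{p(\cdot)})^{*}=L^{p'(\cdot)}$, and then runs a formal iterated-product manipulation $\left(\prod(L^{p(\cdot)},\ell^{\bar q}),\ell^{q_n}\right)^{*}=\left(\prod(L^{p(\cdot)'},\ell^{\bar q'}),\ell^{q_n'}\right)$ carried over from the mixed-norm setting of [15], where $q$ really is a vector of exponents; here $q$ is a single scalar, so that step is essentially a citation rather than an argument. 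You instead prove the representation directly: localize the functional $\Phi$ to each unit cube, represent it there by some $g_k\in L^{p'(\cdot)}(Q_{1,k})$, patch the $g_k$ into a single $g$, obtain the lower bound $\|g\|_{p(\cdot)',q'}\lesssim\|\Phi\|$ by testing against a finite sum of near-extremal $h_k$ weighted by the $\ell^{q}$-dual sequence of $\{\|g_k\|_{L^{p'(\cdot)}}\}_k$, and use density of finitely supported sums (which is precisely where $q<\infty$ enters) to conclude $\Phi=L_g$. Your version buys transparency and correctly isolates where each hypothesis is used; the paper's buys brevity. One point of divergence worth flagging: the paper asserts the exact isometry $\|\phi(T)\|_{p(\cdot)',q'}=\|T\|$ in its display (10), whereas your construction (correctly) yields only two-sided bounds with constants depending on $r_p$ --- in variable Lebesgue spaces the Luxemburg norm and the associate norm are equivalent but not equal, so norm equivalence is all one should claim, and it is all that is needed in the later applications.
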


\begin{proof}
For $0<r<\infty$, by H\"older's inequality, we have
$$\|fg\|_1\le{\|f\|_{p(\cdot),q}}\cdot{\|g\|_{p(\cdot)',q'}},~~f,g\in L_{loc}^{p(\cdot)}(\R^n).$$\par
According to ([11]Theorem 2) and ([2]Theorem 2.6), it immediate to deduce that the dual of $(L^{p(\cdot)},\ell^{q})(\R^n)$ is $(L^{p(\cdot)'},\ell^{q'})(\R^n)$. Let $q=\{q_n\}_{n\in\mathbb{N}}$, if the dual of $(L^{p(\cdot)},\ell^{\bar{q}})(\R^n)$ is $(L^{p(\cdot)'},\ell^{\bar{q}'})(\R^n)$ with $\bar{q}=(q_1,q_2,\cdots,q_{n-1})$, then
\begin{align*}
 (L^{p(\cdot)},\ell^{q})^{*}=\left(\prod(L^{p(\cdot)},\ell^{\bar{q}}),\ell^{q_n}\right)^{*} &=\left(\prod(L^{p(\cdot)},\ell^{\bar{q}})^{*},\left(\ell^{q_n}\right)^{*}\right) \\
 &=\left(\prod(L^{p(\cdot)'},\ell^{\bar{q}'}),\ell^{q'_n}\right)=(L^{p(\cdot)'},\ell^{q'}).
\end{align*}\par
Hence, $(L^{p(\cdot)'},\ell^{q'})(\R^{n})$ is isometrically isomorphic to the dual of $(L^{p(\cdot)},\ell^{q})(\R^{n})$. There is an unique element $\phi(T)$ of $(L^{p'(\cdot)},\ell^{q'})(\R^{n})$ such that
$$T(f)=\int_{\R^n}f(x)\phi(T)(x)dx,~~f\in(L^{p(\cdot)},\ell^{q})(\R^{n})$$
and
\begin{equation}\label{10}
\|\phi(T)\|_{p(\cdot)',q'}=\|T\|,
\end{equation}
where $\|T\|:=\sup\left\{|T(f)|:f\in L_{loc}^{p(\cdot)}(\R^n) \text{ and }\|f\|_{p(\cdot),q}\le 1\right\}$.
\end{proof}

Now, let us prove the main theorem in this section.
\begin{proof}[Proof of Theorem \ref{t3.1}]
Let us to prove (\romannumeral1). Let $\{(c_j,r_j,f_j)\}_{j\ge 1}$ be block decomposition of $f$. By Proposition \ref{4.1} and (\ref{9}), we have for any $j\ge 1$,
\begin{align*}
\left|\int_{\R^n} St_{r_j}^{(\alpha')}(f_j)(x)g(x)dx\right|&=\left|\int_{\R^n} St_{r^{-1}_j}^{(\alpha)}(g)(x)f_j(x)dx\right|\\
&\le\int_{\R^n}\left|St_{r^{-1}_j}^{(\alpha)}(g)(x)f_j(x)\right|dx\\
&\le\|f_j\|_{p(\cdot)',q'}\left\|St_{r^{-1}_j}^{(\alpha)}(g)\right\|_{p(\cdot),q}\\
&\le\left\|St_{r^{-1}_j}^{(\alpha)}(g)\right\|_{p(\cdot),q}\\
&\le\|g\|_{p(\cdot),q,\alpha}.
\end{align*}\par
Therefore we have
$$\sum_{j\ge 1}\int_{\R^n}\left|c_jSt_{r_j}^{(\alpha')}(f_j)(x)g(x)\right|dx\le\|g\|_{p(\cdot),q,\alpha}\sum_{j\ge 1}|c_j|.$$\par
This implies that $fg=g\sum_{j\ge 1}c_jSt_{r_j}^{(\alpha')}(f_j)\in L^1(\R^n)$ and
$$\left|\int_{\R^n}f(x)g(x)dx\right|\le\int_{\R^n}|f(x)g(x)|dx\le\|g\|_{p(\cdot),q,\alpha}\sum_{j\ge 1}|c_j|,$$
taking the infimum with respect to all block decompositions of $f$, we get
$$\left|\int_{\R^n}f(x)g(x)dx\right|\le\|g\|_{p(\cdot),q,\alpha}\|f\|_{\mathcal{H}(p(\cdot)',q',\alpha')}.$$\par

Now, Let us prove (\romannumeral2). By the (\romannumeral1), we have $T_g\in\mathcal{H}(p(\cdot)',q',\alpha')^{*}$.\par
For any $a_1,a_2\in \R,~g_1,g_2\in(L^{p(\cdot)},\ell^{q})^{\alpha}(\R^n)$, it's easy to see
$$T(a_1g_1+a_2g_2)=a_1T_{g_1}+a_2T_{g_2}$$
and
$$\|T_g\|=\sup_{\|f\|_{\mathcal{H}(p(\cdot)',q',\alpha')}\le 1}|T_g(f)|\le\|g\|_{p(\cdot),q,\alpha},$$
that is, $T$ is linear and bounded mapping from $(L^{p(\cdot)},\ell^{q})^{\alpha}(\R^n)$ into $\mathcal{H}{(p(\cdot)',p(\cdot)',\alpha')}^{*}$ satisfying $\|T\|\le 1$. For any $g_1,g_2\in(L^{p(\cdot)},\ell^{q})^{\alpha}(\R^n)\subset(L^{p(\cdot)},\ell^{q})(\R^n)$, if $T_{g_1}=T_{g_2}$, then for any $f\in(L^{p(\cdot)'},\ell^{q'})(\R^n)\subset\mathcal{H}(p(\cdot)',q',\alpha')$, we have $T_{g_1}(f)=T_{g_2}(f).$\par
Thus, $g_1=g_2$, that is , $T$ is injective.

Now, we will prove that $T$ is a surjection and $\|g\|_{p(\cdot),q,\alpha}\le\|T_g\|~(\text{or }\|T\|\ge 1)$. Let $T$ be an element of $\mathcal{H}(p(\cdot)',q',\alpha')^{*}$. From Proposition 3.3, it follows that the restriction $T_0$ of $T$ to $(L^{p(\cdot)'},\ell^{q'})(\R^n)$ belong to $\mathcal{H}(p(\cdot)',q',\alpha')^{*}$. Furthermore, we have $\frac{1}{p(\cdot)'}\leq\frac{1}{\alpha'}\le\frac{1}{q'}.$\par
There is an element $g$ of $(L^{p(\cdot)},\ell^{q})(\R^n)$, such that for any $f\in(L^{p(\cdot)'},\ell^{q'})(\R^n)$
\begin{equation}\label{11}
T(f)=T_0(f)=\int_{\R^n}f(x)g(x)dx.
\end{equation}

Hence, for $f\in(L^{p(\cdot)'},\ell^{q'})(\R^n)$ and $r>0$, we have
$$\int_{\R^n}St^{(\alpha)}_{r}(g)(x)f(x)dx=\int_{\R^n}g(x)St^{(\alpha')}_{r^{-1}}(f)(x)dx=T\left[St^{(\alpha')}_{r^{-1}}(f)\right].$$\par
By the assumption $T\in\mathcal{H}(p(\cdot)',q',\alpha')^{*}$, we have $St^{(\alpha')}_{r^{-1}}(f)\in\mathcal{H}(p(\cdot)',q',\alpha')$ and using (\ref{7})
$$\left|\int_{\R^n}St^{(\alpha)}_{r}(g)(x)f(x)dx\right| \le\|T\|\cdot\|St^{(\alpha')}_{\rho^{-1}}(f)\|_{\mathcal{H}(p(\cdot)',q',\alpha')} \le\|T\|\cdot\|f\|_{p(\cdot)',q'}.$$\par
Due to (\ref{10}), it follows that
$St^{(\alpha)}_{r}(g)\in(L^{p(\cdot)},\ell^{q})(\R^n)$ and $\|St^{(\alpha)}_{r}(g)\|_{p(\cdot),q}\le\|T\|.$\par
Therefore, for any $g\in(L^{p(\cdot)},\ell^{q})(\R^n)$, by Proposition \ref{p3.2},
$$\|g\|_{p(\cdot),q,\alpha}\le\|T\|.$$\par
According to (\ref{11}) and Proposition \ref{p3.3}, we get
$$T(f)=\int_{\R^n}f(x)g(x)dx,~~f\in\mathcal{H}(p(\cdot)',q',\alpha').$$\par
Thus, $T$ is a surjection and $\|g\|_{p(\cdot),q,\alpha}\le\|T\|$.

\end{proof}

\begin{proposition}\label{p3.4}
Let $p(\cdot)\in \mathcal{P}(\R^n), 1\leq q, \alpha\leq\infty$ and $p(\cdot)\leq \alpha \leq q$ and $\chi_{B(x_0,r_0)}$ is a characteristic function on $B(x_0,r_0)$. Then we have
$$\|\chi_{B(x_0,r_0)}\|_{(L^{p(\cdot)},L^{q})^\alpha}\lesssim r_0^{n/\alpha+C_p}\text{ and }\|\chi_{B(x_0,r_0)}\|_{\mathcal{H}(p(\cdot)',q',\alpha')}\lesssim r_0^{n/\alpha'},$$
where
\begin{eqnarray*}
C_p=
\begin{cases}
\frac{n}{p_-}-\frac{n}{p^+}  ~~~~~~r>r_0\geq1,\\
0~~~~~~~~~~~~~~~r>1>r_0~~or~~r<r_0,\\
\frac{n}{p^+}-\frac{n}{p_-}  ~~~~~1\geq r>r_0.
\end{cases}
\end{eqnarray*}
\end{proposition}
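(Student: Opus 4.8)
The plan is to estimate the two norms separately by reducing each to the behaviour of $\|\chi_{B(x_0,r_0)}\chi_{B(\cdot,r)}\|_{L^{p(\cdot)}}$ on the overlap of the two balls, and then using Lemma~\ref{l1.3} to convert these local $L^{p(\cdot)}$-norms of characteristic functions into powers of the radius. For the first bound, I would start from the equivalent (scaled-ball) expression
\[
\|\chi_{B(x_0,r_0)}\|_{(L^{p(\cdot)},L^q)^\alpha}\sim\sup_{r>0}\left\|r^{\frac{n}{\alpha}-\frac{n}{p(\cdot)}-\frac{n}{q}}\|\chi_{B(x_0,r_0)}\chi_{B(\cdot,r)}\|_{L^{p(\cdot)}}\right\|_{L^q},
\]
and split the supremum according to the relative sizes of $r$ and $r_0$ (and whether each is above or below $1$, since that is exactly where Lemma~\ref{l1.3} changes regime). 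In the regime $r\ge r_0$ one has $\chi_{B(x_0,r_0)}\chi_{B(x,r)}=\chi_{B(x_0,r_0)}$ whenever $B(x,r)\supset B(x_0,r_0)$, so the inner $L^{p(\cdot)}$-norm is $\|\chi_{B(x_0,r_0)}\|_{L^{p(\cdot)}}$ times the indicator (in $x$) of a ball of radius $\sim r$; the outer $L^q$-norm then contributes $r^{n/q}$, which cancels the $-n/q$ exponent, and one is left with $r^{n/\alpha-n/p(\cdot)}$ times $\|\chi_{B(x_0,r_0)}\|_{L^{p(\cdot)}}\sim r_0^{n/p_{B_0}}$. Since $\alpha\le q$ forces $n/\alpha-n/p(\cdot)$ to have the right sign for the sup over large $r$ to be controlled (using $p(\cdot)\le\alpha$, hence $1/\alpha-1/p(\cdot)\le 0$ for large balls), the dangerous term is the mismatch between the exponent $n/p(\cdot)$ appearing through $r^{-n/p(\cdot)}$ and the exponent $n/p_{B_0}$ appearing through $\|\chi_{B_0}\|_{L^{p(\cdot)}}$; bounding this mismatch by $r_0^{C_p}$ with $C_p$ as in the statement is precisely the content of the three cases. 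The regime $r<r_0$ is handled by Lemma~\ref{l3.1} type comparison (replace $r$ by $r_0$) together with monotonicity, and gives the $C_p=0$ case.

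For the second bound, $\|\chi_{B(x_0,r_0)}\|_{\mathcal{H}(p(\cdot)',q',\alpha')}\lesssim r_0^{n/\alpha'}$, I would exhibit an explicit single-term block decomposition. Write $\chi_{B(x_0,r_0)}=c\, St^{(\alpha')}_{r_0}(f)$ where $f$ is a translate of $\chi_{B(0,1)}$ (or of $\chi_{Q_{1,k}}$-type sets covering $B(0,1)$, to stay inside the discrete amalgam norm of Proposition~\ref{p3.1}). Since $St^{(\alpha')}_{r_0}$ dilates by $r_0$ and carries a factor $r_0^{-n/\alpha'}$, matching the indicator of $B(x_0,r_0)$ forces $c\sim r_0^{n/\alpha'}\|\chi_{B(0,1)}\|_{p(\cdot)',q'}$; after normalising $f$ so that $\|f\|_{p(\cdot)',q'}\le 1$ one gets a legitimate block decomposition with $\sum|c_j|=|c|\lesssim r_0^{n/\alpha'}$, whence the claimed bound by the definition of $\|\cdot\|_{\mathcal{H}(p(\cdot)',q',\alpha')}$ as an infimum. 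Here one uses that $\chi_{B(0,1)}\in(L^{p(\cdot)'},\ell^{q'})(\R^n)$, which holds because $p(\cdot)'\le\alpha'\le q'$ (the primed version of $p(\cdot)\le\alpha\le q$) by Proposition~\ref{p2.3}, and that $\|\chi_{B(0,1)}\|_{p(\cdot)',q'}$ is a finite constant depending only on $p(\cdot)$ and $n$.

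The main obstacle is the bookkeeping in the first estimate: one must carefully track, in each of the ranges $r>r_0\ge 1$, $r>1>r_0$, $r<r_0\le 1$ (and the borderline $1\ge r>r_0$), which branch of Lemma~\ref{l1.3} applies to $\|\chi_{B(x_0,r_0)}\|_{L^{p(\cdot)}}$ and to $\|\chi_{B(x,r)}\|_{L^{p(\cdot)}}$, and then verify that the resulting exponent of $r_0$ is bounded above by $n/\alpha+C_p$ with $C_p$ exactly as tabulated — the $\pm(n/p_--n/p^+)$ gap appearing precisely when the small ball $B(x_0,r_0)$ and the large ball $B(x,r)$ fall on opposite sides of the threshold where Lemma~\ref{l1.3} reads $|Q|^{1/p(x)}$ versus $|Q|^{1/p(\infty)}$. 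Once the case analysis is set up, each individual estimate is a routine application of Lemma~\ref{l1.3}, Lemma~\ref{l3.1}, and the elementary bound $\|\chi_{B(0,R)}\|_{L^q}\sim R^{n/q}$.
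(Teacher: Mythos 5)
Your proposal is correct and follows essentially the same route as the paper: the first bound via the case analysis over $r\gtrless r_0$ and $r,r_0\gtrless 1$ driven by the two regimes of Lemma~\ref{l1.3}, and the second bound via an explicit one-block decomposition $\chi_{B(x_0,r_0)}=c\,St^{(\alpha')}_{r}(f)$ with a normalised dilated characteristic function (the paper keeps the dilation parameter $r$ free rather than fixing $r=r_0$, but this is immaterial). The only slip is the direction of the primed inequalities (one has $p(\cdot)'\ge\alpha'\ge q'$, not the reverse), which does not affect the argument since $\chi_{B(0,1)}$ lies in $(L^{p(\cdot)'},\ell^{q'})(\R^n)$ in any case.
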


\begin{proof}
It is obviously that
\begin{align*}
\|\chi_{B(x_0,r_0)}\|_{(L^{p(\cdot)},L^{q})^\alpha} \sim\sup_{r>0}
\left\|r^{\frac{n}{\alpha}-\frac{n}{p(\cdot)}-\frac{n}{q}} \|\chi_{B(x_{0},r_0)}\chi_{B(x,r)}\|_{L^{p(\cdot)}} \right\|_{L^{q}}.
\end{align*}\par

If $r>r_0$ and $p(\cdot)\in \mathcal{P}(\R^n)$.
When $r>r_0\geq1$, then by $\frac{n}{\alpha}-\frac{n}{p(\cdot)}\le 0$,
\begin{align*}
\sup_{r>r_0}
\left\|r^{\frac{n}{\alpha}-\frac{n}{p(\cdot)}-\frac{n}{q}}\|\chi_{B(x_{0},r_0)}\chi_{B(\cdot,r)}\|_{L^{p(\cdot)}} \right\|_{L^{q}}
&\le\sup_{r>r_0}r^{\frac{n}{\alpha}-\frac{n}{p^+}-\frac{n}{q}}
\left\| \|\chi_{B(x_{0},r_0)}\|_{L^{p(\cdot)}}\cdot\chi_{B(x_0,r+r_0)}\right\|_{L^{q}}\\
&\lesssim r_0^{\frac{n}{p_-}}\sup_{r>r_0}r^{\frac{n}{\alpha}-\frac{n}{p^+}}
(1+\frac{r_0}{r})^{\frac{n}{q}}\\
&\lesssim r_0^{\frac{n}{\alpha}+C_p},
\end{align*}
when $r>1>r_0$, then by $\frac{n}{\alpha}-\frac{n}{p(\cdot)}\le 0$,
\begin{align*}
\sup_{r>r_0}
\left\|r^{\frac{n}{\alpha}-\frac{n}{p(\cdot)}-\frac{n}{q}}\|\chi_{B(x_{0},r_0)}\chi_{B(\cdot,r)}\|_{L^{p(\cdot)}} \right\|_{L^{q}}
&\le\sup_{r>r_0}r^{\frac{n}{\alpha}-\frac{n}{p^+}-\frac{n}{q}}
\left\| \|\chi_{B(x_{0},r_0)}\|_{L^{p(\cdot)}}\cdot\chi_{B(x_0,r+r_0)}\right\|_{L^{q}}\\
&\lesssim r_0^{\frac{n}{p^+}}\sup_{r>r_0}r^{\frac{n}{\alpha}-\frac{n}{p^+}}
(1+\frac{r_0}{r})^{\frac{n}{q}}\\
&\lesssim r_0^{\frac{n}{\alpha}+C_p},
\end{align*}
when $1\geq r>r_0\geq1$, then by $\frac{n}{\alpha}-\frac{n}{p(\cdot)}\le 0$,
\begin{align*}
\sup_{r>r_0}
\left\|r^{\frac{n}{\alpha}-\frac{n}{p(\cdot)}-\frac{n}{q}}\|\chi_{B(x_{0},r_0)}\chi_{B(\cdot,r)}\|_{L^{p(\cdot)}} \right\|_{L^{q}}
&\le\sup_{r>r_0}r^{\frac{n}{\alpha}-\frac{n}{p_-}-\frac{n}{q}}
\left\| \|\chi_{B(x_{0},r_0)}\|_{L^{p(\cdot)}}\cdot\chi_{B(x_0,r+r_0)}\right\|_{L^{q}}\\
&\lesssim r_0^{\frac{1}{p^+}}\sup_{r>r_0}r^{\frac{n}{\alpha}-\frac{n}{p_-}}
(1+\frac{r_0}{r})^{\frac{n}{q}}\\
&\lesssim r_0^{\frac{n}{\alpha}+C_p}.
\end{align*}\par
For $r\le r_0$ and $p(\cdot)\in \mathcal{P}(\R^n)$, by $\frac{1}{\alpha}-\frac{1}{q}\ge 0$ we have
\begin{align*}
&\sup_{r\le r_0}r^{\frac{n}{\alpha}-\frac{n}{q}}
\left\|\frac{1}{\left\|\chi_{B(\cdot,r)}\right\|_{L^{p(\cdot)}}} \left\| \chi_{B(x_{0},r_0)}\chi_{B(\cdot,r)}\right\|_{L^{p(\cdot)}} \right\|_{L^{q}}\\
&\le\sup_{r\le r_0}r^{\frac{n}{\alpha}-\frac{n}{q}}
\left\|\frac{1}{\left\|\chi_{B(\cdot,r)}\right\|_{L^{p(\cdot)}}}\cdot\left\|\chi_{B(\cdot,r)}\right\|_{L^{p(\cdot)}}\chi_{B(x_0,r+r_0)}\right\|_{L^{q}}\\
&=\sup_{0<r\le r_0}r^{\frac{n}{\alpha}}(1+\frac{r_0}{r})^{\frac{n}{q}}\\
&\lesssim r_0^{\frac{n}{\alpha}+C_p}.
\end{align*}\par

Thus, $\|\chi_{B(x_0,r_0)}\|_{(L^{p(\cdot)},L^{q})^{\alpha}}\lesssim r_0^{n/\alpha+C_p}.$

Next, we show that $\|\chi_{B(x_0,r_0)}\|_{\mathcal{H}(p(\cdot),q',\alpha')}\lesssim r_0^{n/\alpha'}$. First, by the similar argument with dilation operator of (\ref{6}), let
$$\chi_{B(x_0,r_0)}=r^{\frac{n}{\alpha'}}\|\chi_{B(x_0/r,r_0/r)}\|_{p(\cdot)',q'}\cdot St_r^{\alpha'}(\|\chi_{B(x_0/r,r_0/r)}\|_{p(\cdot)',q'}^{-1}\chi_{B(x_0/r,r_0/r)}).$$\par
It is obvious that
$$\left\|\|\chi_{B(x_0/r,r_0/r)}\|_{p(\cdot)',q'}^{-1}\chi_{B(x_0/r,r_0/r)}\right\|_{p(\cdot)',q'}\le 1.$$\par
From Definition \ref{d3.1} and Proposition \ref{p3.1},
\begin{align*}
\|\chi_{B(x_0,r_0)}\|_{\mathcal{H}(p(\cdot)',q',\alpha')} &\le\sup_{r>0}r^{\frac{n}{\alpha'}}\|\chi_{B(x_0/r,r_0/r)}\|_{p(\cdot)',q'}
\lesssim \sup_{r>0}r^{\frac{n}{\alpha'}}\left\|\|\chi_{B(x_0/r,r_0/r)}\chi_{B(\cdot,1)}\|_{L^{p(\cdot)'}} \right\|_{L^{q'}}.
\end{align*}\par
Using the same argument of the proof of $\|\chi_{B(x_0,r_0)}\|_{(L^{\vec{p}},L^{\vec{s}})^\alpha}\lesssim r_0^{n/\alpha}$ with $r_0/r>1$ and $r_0/r\le 1$, we have
\begin{align*}
\|\chi_{B(x_0,r_0)}\|_{\mathcal{H}(p(\cdot)',q',\alpha')} \lesssim\sup_{r>0}r^{\frac{n}{\alpha'}}\left\|\|\chi_{B(x_0/r,r_0/r)}\chi_{B(\cdot,1)}\|_{L^{p(\cdot)'}} \right\|_{L^{q'}}\lesssim r_0^{n/\alpha'}.
\end{align*}\par
The proof is completed.
\end{proof}

\section{\hspace{-0.6cm}{\bf }~~The characterization of fractional integral operators and commutators on variable Fofana's spaces\label{s4}}

In this section, we need the following lemmas about the $BMO(\R^n)$ function.
\begin{lemma}\label{l4.1}
Let $b\in BMO(\R^n)$. For any ball $B$ in $\R^n$ and for any positive integer $j\in \mathbb{Z}^{+}$,
$$|b_{2^{j+1}B}-b_B|\lesssim(j+1)\|b\|_{*}.$$
\end{lemma}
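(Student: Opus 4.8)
The plan is to reduce the estimate to a telescoping sum over the chain of nested dilates $B\subset 2B\subset\cdots\subset 2^{j+1}B$ and to control each single gap by the $\mathrm{BMO}$ seminorm $\|b\|_{*}$. First I would write
\[
b_{2^{j+1}B}-b_{B}=\sum_{i=0}^{j}\bigl(b_{2^{i+1}B}-b_{2^{i}B}\bigr),
\]
so that by the triangle inequality it suffices to prove the one-step bound $|b_{2^{i+1}B}-b_{2^{i}B}|\lesssim\|b\|_{*}$ with a constant independent of $i$ and of the ball $B$.

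For the one-step bound the key point is that $2^{i}B\subset 2^{i+1}B$ while $|2^{i+1}B|=2^{n}|2^{i}B|$. Hence I would estimate
\[
|b_{2^{i+1}B}-b_{2^{i}B}|=\left|\frac{1}{|2^{i}B|}\int_{2^{i}B}\bigl(b(x)-b_{2^{i+1}B}\bigr)\,dx\right|\le\frac{2^{n}}{|2^{i+1}B|}\int_{2^{i+1}B}\bigl|b(x)-b_{2^{i+1}B}\bigr|\,dx\le 2^{n}\|b\|_{*},
\]
where the last step is just the definition of $\|b\|_{*}$ applied to the ball $2^{i+1}B$. Summing this over $i=0,1,\dots,j$ then yields $|b_{2^{j+1}B}-b_{B}|\le(j+1)\,2^{n}\|b\|_{*}\lesssim(j+1)\|b\|_{*}$, which is the claim.

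I do not expect a genuine obstacle here: the argument is entirely elementary once the telescoping decomposition is set up, and the only thing to watch is that the implicit constant (namely $2^{n}$, depending only on the dimension) is independent of $j$ and $B$, which is exactly what the statement permits. The same proof works verbatim with balls replaced by cubes, since dilation by a factor of $2$ multiplies the Lebesgue measure by $2^{n}$ in either case; this is the form in which the estimate will be used in the sequel.
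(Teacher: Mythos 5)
Your proof is correct. Both you and the paper begin with the same telescoping decomposition $b_{2^{j+1}B}-b_{B}=\sum_{i=0}^{j}(b_{2^{i+1}B}-b_{2^{i}B})$ and the same reduction $|b_{2^{i+1}B}-b_{2^{i}B}|\le\frac{1}{|2^{i}B|}\int_{2^{i}B}|b(y)-b_{2^{i+1}B}|\,dy$, but you then finish in the most elementary way possible: enlarge the domain of integration to $2^{i+1}B$, absorb the measure ratio $2^{n}$, and invoke the definition of $\|b\|_{*}$ on the ball $2^{i+1}B$. The paper instead takes a detour through the variable-exponent machinery: it applies the generalized H\"older inequality (Lemma 1.2) to split the integral as $\|(b-b_{2^{i+1}B})\chi_{2^{i+1}B}\|_{L^{p(\cdot)}}\|\chi_{2^{i+1}B}\|_{L^{p'(\cdot)}}$, controls the first factor by $\|b\|_{*}\|\chi_{2^{i+1}B}\|_{L^{p(\cdot)}}$ via Lemma 4.2, and then uses Lemma 1.4 to bound $\frac{1}{|2^{i+1}B|}\|\chi_{2^{i+1}B}\|_{L^{p(\cdot)}}\|\chi_{2^{i+1}B}\|_{L^{p'(\cdot)}}$ by a constant. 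The two routes prove the same inequality; yours is shorter, requires no hypothesis on $p(\cdot)$ at all (the statement is purely real-variable, so importing $L^{p(\cdot)}$ norms is logically superfluous), and makes the dimensional constant $2^{n}$ explicit, while the paper's version has the mild advantage of rehearsing exactly the H\"older-plus-Lemma-4.2-plus-Lemma-1.4 chain that reappears in the estimates of $J_{2}$ in Theorem 4.2. No gap in either argument.
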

\begin{proof}[Proof of Lemma \ref{l4.1}]
\begin{align*}
|b_{2^{j+1}B}-b_B|&\leq \sum_{k=0}^j|b_{2^{k+1}B}-b_{2^{k}B}|\\
                  &\lesssim\sum_{k=0}^j\frac{1}{|2^{k}B|}\int_{2^{k}B}|b(y)-b_{2^{k+1}B}|dy\\
                  &\lesssim\sum_{k=0}^j\frac{1}{|2^{k}B|}\|(b-b_{2^{k+1}B})\chi_{2^{k+1}B}\|_{L^{p(\cdot)}(\mathbb{R}^n)}\|\chi_{2^{k+1}B}\|_{L^{p'(\cdot)}(\mathbb{R}^n)}\\
                  &\lesssim\|b\|_{*}\sum_{k=0}^j\frac{1}{|2^{k}B|}\|\chi_{2^{k+1}B}\|_{L^{p(\cdot)}(\mathbb{R}^n)}\|\chi_{2^{k+1}B}\|_{L^{p'(\cdot)}(\mathbb{R}^n)}\\
                 &\lesssim(j+1)\|b\|_*.
\end{align*}\par
\end{proof}

\begin{lemma}[see\cite{17} Lemma 3]\label{l4.2}
Let $p(\cdot)\in\mathcal{B}(\mathbb{R}^n)$, k be a positive integer and B be a ball in $\mathbb{R}^n$. Then we have that for all $b\in BMO(\mathbb{R}^n)$ and all $j,~i\in \mathbb{Z}$ with $j>i$,
\begin{align*}
\frac{1}{C}\|b\|^k_*\leq\sup_{B}\frac{1}{\|\chi_B\|_{L^{p(\cdot)}(\mathbb{R}^n)}}\|(b-b_B)^k\chi_B\|_{L^{p(\cdot)}(\mathbb{R}^n)}\leq C\|b\|^k_*,\\
\|(b-b_{B_i})^k\chi_{B_j}\|_{L^{p(\cdot)}(\mathbb{R}^n)}\leq C(j-i)^k\|b\|^k_*\|\chi_{B_j}\|_{L^{p(\cdot)}(\mathbb{R}^n)},
\end{align*}
where $B_i=\{x\in \mathbb{R}^n : |x|\leq2^i\}$ and $B_j=\{x\in \mathbb{R}^n : |x|\leq2^j\}$.
\end{lemma}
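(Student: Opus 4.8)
The plan is to prove the two estimates separately, using the $L^{p(\cdot)}$–$L^{p'(\cdot)}$ duality structure (Lemma \ref{l1.2}, Lemma \ref{l1.4}) together with the John–Nirenberg inequality in its $L^{p(\cdot)}$ form. For the first (two-sided) estimate, the upper bound is the substantive half. Since $p(\cdot)\in\mathcal{B}(\R^n)$, the John–Nirenberg inequality guarantees that for $b\in BMO(\R^n)$ there is a constant $c_0>0$ with $\||b-b_B|\chi_B\|_{L^{r(\cdot)}}\lesssim \|b\|_*\|\chi_B\|_{L^{r(\cdot)}}$ for a suitable exponent; iterating (or applying the power–weight form) upgrades this to the $k$-th power, giving $\|(b-b_B)^k\chi_B\|_{L^{p(\cdot)}}\lesssim\|b\|_*^k\|\chi_B\|_{L^{p(\cdot)}}$, whence the $\sup_B$ is $\lesssim\|b\|_*^k$. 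For the lower bound, I would test against the definition of $\|b\|_*$: by the generalized Hölder inequality (Lemma \ref{l1.2}) and Lemma \ref{l1.4},
\begin{align*}
\|b\|_*^k\lesssim\left(\frac{1}{|B|}\int_B|b(x)-b_B|\,dx\right)^k
\lesssim\left(\frac{\|(b-b_B)\chi_B\|_{L^{p(\cdot)}}\,\|\chi_B\|_{L^{p'(\cdot)}}}{|B|}\right)^k
\end{align*}
is not quite enough for the $k$-th power directly; instead one applies Hölder with the pair $L^{p(\cdot)}$, $L^{p'(\cdot)}$ to $\int_B|b-b_B|^k$ after writing $|b-b_B|^k=|b-b_B|^{k}\cdot 1$, obtaining
$\int_B|b-b_B|^k\le r_p\|(b-b_B)^k\chi_B\|_{L^{p(\cdot)}}\|\chi_B\|_{L^{p'(\cdot)}}$, then using Lemma \ref{l1.4} to replace $\|\chi_B\|_{L^{p'(\cdot)}}$ by $|B|/\|\chi_B\|_{L^{p(\cdot)}}$ and the standard lower bound $\frac{1}{|B|}\int_B|b-b_B|^k\gtrsim\|b\|_*^k$ (itself a consequence of John–Nirenberg). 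Dividing through by $\|\chi_B\|_{L^{p(\cdot)}}$ and taking the supremum gives the claimed lower bound.

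For the second estimate, fix $j>i$ and write $b-b_{B_i}=(b-b_{B_j})+(b_{B_j}-b_{B_i})$. For the first piece, apply the first estimate on the ball $B_j$: $\|(b-b_{B_j})^k\chi_{B_j}\|_{L^{p(\cdot)}}\lesssim\|b\|_*^k\|\chi_{B_j}\|_{L^{p(\cdot)}}$. For the scalar piece $|b_{B_j}-b_{B_i}|$, the telescoping argument of Lemma \ref{l4.1} (with the dyadic balls $B_i\subset B_{i+1}\subset\cdots\subset B_j$, each the double of the previous one) gives $|b_{B_j}-b_{B_i}|\lesssim(j-i)\|b\|_*$, so $|b_{B_j}-b_{B_i}|^k\lesssim(j-i)^k\|b\|_*^k$ and hence $\|(b_{B_j}-b_{B_i})^k\chi_{B_j}\|_{L^{p(\cdot)}}\lesssim(j-i)^k\|b\|_*^k\|\chi_{B_j}\|_{L^{p(\cdot)}}$. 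Combining the two pieces via the binomial expansion $|b-b_{B_i}|^k\lesssim_k |b-b_{B_j}|^k+|b_{B_j}-b_{B_i}|^k$ and absorbing the $O(1)$ and $O((j-i)^k)$ constants into a single factor $(j-i)^k$ yields the desired bound.

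The main obstacle is obtaining the $k$-th power version of the John–Nirenberg inequality in the variable setting, i.e. the upper bound $\|(b-b_B)^k\chi_B\|_{L^{p(\cdot)}}\lesssim\|b\|_*^k\|\chi_B\|_{L^{p(\cdot)}}$; this is where the hypothesis $p(\cdot)\in\mathcal{B}(\R^n)$ is essential, since it is precisely for such exponents that the generalized John–Nirenberg estimate (equivalently, the boundedness on $L^{p(\cdot)}$ forcing a Muckenhoupt-type self-improvement) is available. Once that is in hand, everything else is Hölder's inequality (Lemma \ref{l1.2}), the $\|\chi_B\|_{L^{p(\cdot)}}\|\chi_B\|_{L^{p'(\cdot)}}\lesssim|B|$ bound (Lemma \ref{l1.4}), and the elementary telescoping already carried out in Lemma \ref{l4.1}. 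I would therefore cite \cite{17} for the precise form of the variable John–Nirenberg estimate rather than reprove it, and present the lemma's proof as the assembly of these pieces.
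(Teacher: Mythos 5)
The paper does not prove this lemma at all: it is imported verbatim as Lemma~3 of Izuki's paper \cite{17}, so there is no internal proof to compare your proposal against. Judged on its own terms, your assembly steps are sound. The lower bound via generalized H\"older applied to $|b-b_B|^k\cdot 1$, Lemma~\ref{l1.4} to trade $\|\chi_B\|_{L^{p'(\cdot)}}$ for $|B|/\|\chi_B\|_{L^{p(\cdot)}}$, and Jensen's inequality $\frac{1}{|B|}\int_B|b-b_B|^k\geq\bigl(\frac{1}{|B|}\int_B|b-b_B|\bigr)^k$ (you do not even need John--Nirenberg here) is correct. The second estimate via $b-b_{B_i}=(b-b_{B_j})+(b_{B_j}-b_{B_i})$, the telescoping bound $|b_{B_j}-b_{B_i}|\lesssim(j-i)\|b\|_*$, and the elementary inequality $|u+v|^k\leq 2^{k-1}(|u|^k+|v|^k)$ is also a standard and correct reduction, with the $O(1)$ term absorbed since $j-i\geq 1$.

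The weak point is that the entire substantive content of the lemma --- the upper bound $\|(b-b_B)^k\chi_B\|_{L^{p(\cdot)}}\lesssim\|b\|_*^k\|\chi_B\|_{L^{p(\cdot)}}$ --- is exactly what you end up citing from \cite{17}, so the ``proof'' reduces the cited lemma to its own main ingredient. If you want this step to be more than a citation, ``iterating'' the $k=1$ case is not automatic: the clean route is the identity $\||f|^k\|_{L^{p(\cdot)}}=\|f\|_{L^{kp(\cdot)}}^k$ for the Luxemburg norm together with the fact (due to Diening) that $p(\cdot)\in\mathcal{B}(\R^n)$ implies $kp(\cdot)\in\mathcal{B}(\R^n)$ for $k\geq 1$, after which the $k=1$ John--Nirenberg-type estimate in $L^{kp(\cdot)}$ gives the claim; alternatively one uses the exponential integrability from the classical John--Nirenberg inequality plus the norm equivalences of Lemma~\ref{l1.3}/\ref{l1.4}. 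Absent one of these arguments, your write-up has a genuine gap at its central step; with either one supplied, it is a complete and self-contained proof, which would in fact be more than the paper itself provides.
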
\par

\begin{lemma}[see\cite{19} ]\label{l4.3}
Given an open set $\Omega\in \R^n$ and $\alpha$, $0 <\alpha<n$, $p(\cdot)\in \mathcal{P}(\Omega)$ satisfies conditions (1), (2) in Lemma1.1. Define $q(\cdot):\Omega \rightarrow [1,\infty)$ by $\frac{1}{p(\cdot)}-\frac{1}{q(\cdot)}= \frac{\alpha}{n}$, then the
fractional integral operator $I_\alpha$ is bounded from $L^p(\cdot)(\Omega)$ to $L^q(\cdot)(\Omega)$.
\end{lemma}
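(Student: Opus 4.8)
The plan is to establish the Hedberg-type pointwise inequality that underlies the constant-exponent case and then conclude from the boundedness of the Hardy--Littlewood maximal operator $M$; this is essentially the route of \cite{19}. Observe first that the requirement $q(\cdot):\Omega\to[1,\infty)$ forces $p^+<n/\alpha$, which will be used to sum a geometric series below. By homogeneity of both norms it suffices to treat $f$ with $\|f\|_{L^{p(\cdot)}(\Omega)}\le 1$, so that $\int_\Omega|f(x)|^{p(x)}\,dx\le 1$; extending $f$ by zero and (if necessary) $p(\cdot)$ to all of $\R^n$ while keeping comparable log-H\"older constants, the goal becomes to bound the modular $\int_\Omega|I_\alpha f(x)|^{q(x)}\,dx$ by an absolute constant. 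For fixed $x$ and $\delta>0$ I split $I_\alpha f(x)$ into the integral over $\{|x-y|<\delta\}$ (the near part) and the integral over $\{|x-y|\ge\delta\}$ (the far part).

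A dyadic decomposition of the near part over the annuli $2^{-k-1}\delta\le|x-y|<2^{-k}\delta$, together with $\frac1{|B(x,\rho)|}\int_{B(x,\rho)}|f|\le Mf(x)$, bounds it by a constant times $\delta^{\alpha}Mf(x)$. For the far part I apply the generalized H\"older inequality (Lemma \ref{l1.2}) on each annulus $2^{k}\delta\le|x-y|<2^{k+1}\delta$, use $\|\chi_{B(x,\rho)}\|_{L^{p'(\cdot)}}\sim|B(x,\rho)|\,\|\chi_{B(x,\rho)}\|_{L^{p(\cdot)}}^{-1}$ (Lemmas \ref{l1.3}--\ref{l1.4}), and invoke $\|\chi_{B(x,\rho)}\|_{L^{p(\cdot)}}\sim\rho^{n/p(x)}$ at small scales (Lemma \ref{l1.3}); summing the geometric series, which converges because $p^+<n/\alpha$, gives a bound of a constant times $\delta^{\alpha-n/p(x)}$, with constant uniform in $x$ thanks to (\ref{1})--(\ref{2}). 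Balancing the two bounds via $\delta=(Mf(x))^{-p(x)/n}$ yields the pointwise Hedberg inequality $|I_\alpha f(x)|\lesssim(Mf(x))^{1-\alpha p(x)/n}=(Mf(x))^{p(x)/q(x)}$ on the set $\{Mf(x)\ge 1\}$; on the complement I write $f=f\chi_{\{|f|>1\}}+f\chi_{\{|f|\le 1\}}$ and control the two summands by the constant-exponent boundedness of $I_\alpha$ from $L^{p_-}$ and from $L^{p^+}$ respectively (the large summand lies in $L^{p_-}$, the small one in $L^{p^+}$). Raising to the power $q(x)$ and integrating then gives $\int_\Omega|I_\alpha f(x)|^{q(x)}\,dx\lesssim\int_{\R^n}(Mf(x))^{p(x)}\,dx+C$, and the right-hand side is $\lesssim 1$ because $M$ is bounded on $L^{p(\cdot)}(\R^n)$ by Lemma \ref{l1.1} (the norm bound converts $\int|f|^{p(x)}\le 1$ into a modular bound for $Mf$). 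Hence $\|I_\alpha f\|_{L^{q(\cdot)}(\Omega)}\lesssim 1\sim\|f\|_{L^{p(\cdot)}(\Omega)}$.

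The main obstacle is keeping the far-part estimate uniform in $x$ over all scales: in the constant case $\|\chi_{B(x,\rho)}\|_{L^{p}}=|B(x,\rho)|^{1/p}$ for every $\rho$, whereas here the exponent of $\rho$ switches from $n/p(x)$ at small scales to $n/p(\infty)$ at large scales, and it is precisely the local log-H\"older condition (\ref{1}) and the decay condition (\ref{2}) at infinity that make the comparison $\|\chi_{B(x,\rho)}\|_{L^{p(\cdot)}}\sim\rho^{n/p(x)}$ hold with constant independent of $x$ (Lemma \ref{l1.3}) and that put $p(\cdot)$ in $\mathcal{B}(\R^n)$ (Lemma \ref{l1.1}). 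Moreover the balancing radius $\delta=(Mf(x))^{-p(x)/n}$ is small only when $Mf(x)\ge1$, so the regime $Mf(x)<1$ --- equivalently the size splitting $f=f\chi_{\{|f|>1\}}+f\chi_{\{|f|\le 1\}}$ --- has to be treated separately, and this is the technical core of the argument. An alternative and shorter route is to deduce the statement from off-diagonal Rubio de Francia extrapolation applied to the classical bound $I_\alpha:L^{p_0}\to L^{q_0}$ with $1/p_0-1/q_0=\alpha/n$, again using that (\ref{1})--(\ref{2}) place $p(\cdot)$ in $\mathcal{B}(\R^n)$ via Lemma \ref{l1.1}; the dyadic computations, the changes of variables, and the concluding application of Lemma \ref{l1.1} are otherwise routine.
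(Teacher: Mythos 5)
The paper offers no proof of this lemma at all: it is imported verbatim from \cite{19}, so there is no internal argument to measure yours against. Your Hedberg-type reconstruction is the standard direct route to the result, and most of it is sound: the near/far splitting, the annulus-by-annulus use of Lemmas \ref{l1.2}--\ref{l1.4} to get the far-part bound $\delta^{\alpha-n/p(x)}$ (with the small-scale/large-scale switch in $\|\chi_B\|_{L^{p(\cdot)}}$ handled by conditions (1)--(2)), the balancing $\delta=(Mf(x))^{-p(x)/n}$ on $\{Mf\ge 1\}$, and the final modular estimate via the boundedness of $M$ are exactly the known scheme. It is worth noting that \cite{19} itself derives the $I_\alpha$ bound from weighted constant-exponent inequalities by extrapolation (the direct pointwise computation there is used for the fractional \emph{maximal} operator), so the ``alternative shorter route'' you mention at the end is in fact closer to the cited paper's actual proof.

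Two points do not close as written. First, the hypothesis only forces $p(x)<n/\alpha$ almost everywhere, hence $p^+\le n/\alpha$; the strict inequality $p^+<n/\alpha$ that you need to sum the geometric series uniformly (and that \cite{19} assumes explicitly, as does Theorem \ref{t4.1} via $0<\gamma<n/p_1^+$) does not follow from ``$q(\cdot)$ takes values in $[1,\infty)$'' alone. Second, and more substantively, the regime $\{Mf(x)<1\}$ is a genuine gap. Writing $f=f_1+f_2$ with $f_1=f\chi_{\{|f|>1\}}\in L^{p_-}$ and $f_2=f\chi_{\{|f|\le 1\}}\in L^{p^+}$ gives $I_\alpha f_1\in L^{q_1}$ and $I_\alpha f_2\in L^{q_2}$ with $1/q_1=1/p_--\alpha/n$ and $1/q_2=1/p^+-\alpha/n$, but these fixed-exponent norm bounds do not control the variable modular $\int|I_\alpha f(x)|^{q(x)}\,dx$: since $q_1\le q(x)\le q_2$, one has $|g|^{q(x)}\le |g|^{q_1}$ only where $|g|\le 1$ and $|g|^{q(x)}\le|g|^{q_2}$ only where $|g|>1$, and neither membership $g\in L^{q_1}$ nor $g\in L^{q_2}$ alone bounds the complementary piece. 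The standard repairs are either to carry an explicit additive error term of the form $(e+|x|)^{-N}$, controlled by the decay condition (2), through the Hedberg estimate at large scales, or to bypass the pointwise argument entirely via extrapolation. The gap is repairable --- the lemma is of course true --- but the step as sketched does not yet constitute a proof.
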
\par

\begin{lemma}[see\cite{21}]\label{l4.4}
Suppose that $p_1(\cdot)\in \mathcal{P}(\R^n)$ satisfies conditions (1), (2) in Lemma1.1. $0 <\alpha<\frac{n}{{p_1}^+}$. Define the variable exponent $p_2(\cdot)$ by $\frac{1}{p_1(\cdot)}-\frac{1}{p_2(\cdot)}= \frac{\alpha}{n}$, then we have
$$ \| [b,I_\alpha]f\|_{L^{p_2(\cdot)}(\R^n)}\lesssim\|b\|_{*}\|f\|_{L^{p_1(\cdot)}(\R^n)},$$
for all $f \in L^{p_1(\cdot)(\R^n)}$ and $b \in \rm BMO(\R^n)$.
\end{lemma}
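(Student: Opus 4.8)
The plan is to reduce the estimate to a pointwise bound for the sharp maximal function of the commutator, combined with the Fefferman--Stein inequality on variable Lebesgue spaces. First note that the exponent $p_2(\cdot)$ given by $1/p_2(\cdot)=1/p_1(\cdot)-\alpha/n$ again belongs to $\mathcal{P}(\R^n)$: the hypothesis $\alpha<n/p_1^+$ is precisely what forces $p_2^+<\infty$, while $(p_2)_-\ge(p_1)_->1$, and $p_2(\cdot)$ satisfies the same log-Hölder conditions (1) and (2) as $p_1(\cdot)$ (up to constants). Hence, by Lemma \ref{l1.1}, $M$ is bounded on $L^{p_2(\cdot)}(\R^n)$ and on $L^{p_2(\cdot)/t}(\R^n)$ for every $0<t<1$, and by Lemma \ref{l4.3} the operator $I_\alpha$ maps $L^{p_1(\cdot)}(\R^n)$ boundedly into $L^{p_2(\cdot)}(\R^n)$. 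Fix exponents $0<\delta<\epsilon<1$ and $1<s<(p_1)_-$ (so that in particular $\alpha s<n$), and for $0<t<\infty$ write $M_tg:=\bigl(M(|g|^t)\bigr)^{1/t}$, $M^{\sharp}_tg:=\bigl(M^{\sharp}(|g|^t)\bigr)^{1/t}$, and $M_{\alpha,s}g:=\bigl(M_{\alpha s}(|g|^s)\bigr)^{1/s}$, where $M_\beta$ is the fractional maximal operator of order $\beta$.

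The core of the argument is the pointwise inequality
\[
M^{\sharp}_\delta\bigl([b,I_\alpha]f\bigr)(x)\ \lesssim\ \|b\|_*\,\Bigl(M_\epsilon(I_\alpha f)(x)+M_{\alpha,s}f(x)\Bigr),\qquad x\in\R^n,
\]
valid for all $b\in BMO(\R^n)$ and all sufficiently nice $f$. To establish it, fix a ball $B=B(x_0,r)$ with $x\in B$, split $f=f\chi_{2B}+f\chi_{(2B)^c}$, and subtract from $[b,I_\alpha]f$ the constant $c:=I_\alpha\bigl((b-b_{2B})f\chi_{(2B)^c}\bigr)(x_0)$, so that $[b,I_\alpha]f-c$ decomposes into $(b(\cdot)-b_{2B})\,I_\alpha f$, the local term $I_\alpha\bigl((b-b_{2B})f\chi_{2B}\bigr)$, and the tail $I_\alpha\bigl((b-b_{2B})f\chi_{(2B)^c}\bigr)-c$. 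The first term is controlled by Hölder's inequality and John--Nirenberg (every power of $b-b_{2B}$ has $L^t$-average over $2B$ comparable to $\|b\|_*$), giving $\|b\|_*\,M_\epsilon(I_\alpha f)(x)$. The local term is handled by Hölder's inequality, the $(s,s^{*})$-boundedness of $I_\alpha$ with $1/s^{*}=1/s-\alpha/n$ (here $s^{*}<\infty$ because $\alpha s<n$), and once more John--Nirenberg, yielding $\|b\|_*\,M_{\alpha,s}f(x)$. For the tail one uses the smoothness of the kernel $|\cdot|^{\alpha-n}$ together with the inequality $|y-x_0|<r\le\tfrac12|x_0-z|$ for $z\in(2B)^c$, decomposes $(2B)^c=\bigcup_{j\ge1}(2^{j+1}B\setminus 2^jB)$, and on each shell bounds $\int_{2^{j+1}B}|b-b_{2B}|\,|f|$ by Hölder's inequality plus the growth estimate $|b_{2^{j+1}B}-b_{2B}|\lesssim(j+1)\|b\|_*$ supplied by Lemma \ref{l4.1}; summing the convergent series $\sum_{j\ge1}(j+1)2^{-j}$ reproduces $\|b\|_*\,M_{\alpha,s}f(x)$.

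Granting this pointwise bound, the conclusion follows quickly. Since $|g|\le M_\delta g$ a.e.\ and, by the Fefferman--Stein inequality on variable Lebesgue spaces (which holds under the log-Hölder conditions (1), (2)), $\|M_\delta g\|_{L^{p_2(\cdot)}}\lesssim\|M^{\sharp}_\delta g\|_{L^{p_2(\cdot)}}$, we obtain
\[
\|[b,I_\alpha]f\|_{L^{p_2(\cdot)}}\ \lesssim\ \|M^{\sharp}_\delta([b,I_\alpha]f)\|_{L^{p_2(\cdot)}}\ \lesssim\ \|b\|_*\Bigl(\|M_\epsilon(I_\alpha f)\|_{L^{p_2(\cdot)}}+\|M_{\alpha,s}f\|_{L^{p_2(\cdot)}}\Bigr).
\]
The first term equals $\bigl\|M(|I_\alpha f|^\epsilon)\bigr\|_{L^{p_2(\cdot)/\epsilon}}^{1/\epsilon}\lesssim\|I_\alpha f\|_{L^{p_2(\cdot)}}\lesssim\|f\|_{L^{p_1(\cdot)}}$ by the boundedness of $M$ on $L^{p_2(\cdot)/\epsilon}$ and Lemma \ref{l4.3}. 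The second term equals $\bigl\|M_{\alpha s}(|f|^s)\bigr\|_{L^{p_2(\cdot)/s}}^{1/s}$; using $M_{\alpha s}(|f|^s)\lesssim I_{\alpha s}(|f|^s)$ pointwise and the fact that $I_{\alpha s}$ maps $L^{p_1(\cdot)/s}$ into $L^{p_2(\cdot)/s}$ (Lemma \ref{l4.3} applied to $p_1(\cdot)/s$, which satisfies its hypotheses since $s<(p_1)_-$), this is $\lesssim\bigl\||f|^s\bigr\|_{L^{p_1(\cdot)/s}}^{1/s}=\|f\|_{L^{p_1(\cdot)}}$. Adding the two estimates gives the desired bound.

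The main obstacle is the pointwise sharp maximal estimate, and within it the tail term: the oscillation of $b$ between distant dyadic shells grows only logarithmically, $|b_{2^{j+1}B}-b_{2B}|\lesssim(j+1)\|b\|_*$, so one must verify that this growth is defeated by the geometric factor $2^{-j}$ produced by the kernel smoothness and the $\alpha$-gain in integrability; this is exactly where the strict inequality $\alpha<n/p_1^+$ is used, as it permits choosing $s$ with $1<s<(p_1)_-$ and $\alpha s<n$. A secondary technical point is that the Fefferman--Stein inequality is a priori available only when, say, $M_\delta([b,I_\alpha]f)\in L^{p_2(\cdot)}(\R^n)$; this is dealt with by first proving the theorem for $f\in C_c^\infty(\R^n)$ and then extending by density. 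An alternative route that avoids the sharp maximal function altogether is Rubio de Francia extrapolation, starting from the classical weighted bound $\|[b,I_\alpha]f\|_{L^q(w^q)}\lesssim\|b\|_*\|f\|_{L^p(w^p)}$ for $w\in A_{p,q}$, which again requires only the log-Hölder regularity of the exponents.
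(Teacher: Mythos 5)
The paper does not actually prove this lemma: it is stated as a quoted result from Izuki [21] and used as a black box, so there is no in-paper argument to compare yours against. Your reconstruction is a correct and standard route: the Pérez-type pointwise bound $M^{\sharp}_{\delta}([b,I_\alpha]f)\lesssim\|b\|_{*}\bigl(M_{\epsilon}(I_\alpha f)+M_{\alpha,s}f\bigr)$ followed by the variable-exponent Fefferman--Stein inequality, the boundedness of $M$ on $L^{p_2(\cdot)/\epsilon}(\R^n)$, and the boundedness of $I_{\alpha s}$ from $L^{p_1(\cdot)/s}(\R^n)$ to $L^{p_2(\cdot)/s}(\R^n)$; all of these auxiliary exponents inherit the log-H\"older conditions (1)--(2) and have essential infimum greater than $1$ precisely because $s<(p_1)_-$ and $\alpha<n/p_1^{+}$, and your check that $p_2(\cdot)\in\mathcal{P}(\R^n)$ is right. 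You also correctly identify the two genuinely delicate points: the a priori finiteness required to invoke Fefferman--Stein (handled by first working with $f\in C_c^{\infty}(\R^n)$) and the absorption of the logarithmic growth $|b_{2^{j+1}B}-b_{2B}|\lesssim(j+1)\|b\|_{*}$ by the geometric decay $2^{-j}$ in the tail term. The alternative you mention, Rubio de Francia extrapolation from the classical $A_{p,q}$-weighted commutator bound, is equally valid and is in fact the mechanism most commonly used in the variable-exponent literature to obtain exactly this statement. In short, the proposal supplies a complete and essentially correct proof where the paper offers only a citation.
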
\par

\begin{theorem}\label{t4.1}
Let $p_1(\cdot), p_2(\cdot)\in \mathcal{P}(\R^n)$ satisfies conditions (1), (2) in Lemma1.1, $0<\gamma<n/{p_1^+}$, $1\leq q, \alpha\leq\infty$ and $p_1(\cdot)<\alpha<q<\infty$, $p_2(\cdot)<\beta<q<\infty$. Assume that  $\frac{n}{p_1(\cdot)}-\frac{n}{p_2(\cdot)}=\gamma$. Then the fractional integral operators $I_\gamma$ are bounded from $(L^{p_1(\cdot)},L^{q})^{\alpha}(\R^n)$ to $(L^{p_2(\cdot)},L^{q})^{\beta}(\R^n)$ if and only if $$\gamma=\frac{n}{\alpha}-\frac{n}{\beta}.$$
\end{theorem}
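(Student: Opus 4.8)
The plan is to split the statement into the sufficiency direction (if $\gamma=\frac{n}{\alpha}-\frac{n}{\beta}$ then $I_\gamma$ is bounded) and the necessity direction (boundedness forces the scaling relation). For the sufficiency, I would work with the discrete equivalent norm from Proposition \ref{p3.1} and the dilation description in Proposition \ref{p3.2}(iv), so it suffices to bound $\sup_{r>0}\|St_r^{(\beta)}(I_\gamma f)\|_{p_2(\cdot),q}$ by $\sup_{r>0}\|St_r^{(\alpha)}(f)\|_{p_1(\cdot),q}$. The key computation is the commutation identity between $I_\gamma$ and dilations: a direct change of variables gives $I_\gamma(St_r^{(\alpha)}f)=r^{\gamma-\frac{n}{\alpha}+\frac{n}{\beta}}\,St_r^{(\beta)}(I_\gamma f)$, so under the hypothesis $\gamma=\frac{n}{\alpha}-\frac{n}{\beta}$ the scalar factor is $1$ and $I_\gamma$ intertwines the two dilation actions exactly. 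Hence it is enough to prove the ``local'' estimate $\|I_\gamma g\|_{p_2(\cdot),q}\lesssim\|g\|_{p_1(\cdot),q}$ for the amalgam spaces (uniformly), and this reduces, cube by cube, to Lemma \ref{l4.3} (the Capone--Cruz-Uribe--Fiorenza boundedness of $I_\gamma$ on variable $L^{p}$) together with a standard splitting of $I_\gamma$ into a local part and a tail part controlled by the $\ell^q$ structure; the tail is summable because $q<\infty$ and the kernel decays.

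For the necessity direction I would test the assumed boundedness on characteristic functions of balls and let the radius degenerate, exploiting the two-sided size estimates of $\|\chi_{B(x_0,r_0)}\|_{(L^{p(\cdot)},L^q)^\alpha}$ from Proposition \ref{p3.4}. Concretely, take $f=\chi_{B(0,r_0)}$; for $|x|$ comparable to $r_0$ one has the pointwise lower bound $I_\gamma f(x)\gtrsim r_0^{\gamma}$, so $I_\gamma f \gtrsim r_0^\gamma\chi_{B(0,cr_0)}$, and therefore $\|I_\gamma f\|_{(L^{p_2(\cdot)},L^q)^\beta}\gtrsim r_0^{\gamma}\|\chi_{B(0,cr_0)}\|_{(L^{p_2(\cdot)},L^q)^\beta}$. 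Plugging in the homogeneity exponents from Proposition \ref{p3.4}, boundedness yields $r_0^{\gamma+n/\beta+C_{p_2}}\lesssim r_0^{n/\alpha+C_{p_1}}$ for all $r_0$; letting $r_0\to0$ and $r_0\to\infty$ and using that $C_{p_1},C_{p_2}$ vanish in the relevant regime (since on the decisive scales the balls satisfy $r>1>r_0$ or $r<r_0$, where $C_p=0$, using $p_i(\cdot)$ locally log-Hölder) forces $\gamma+\frac{n}{\beta}=\frac{n}{\alpha}$, i.e. $\gamma=\frac{n}{\alpha}-\frac{n}{\beta}$.

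The main obstacle I anticipate is the sufficiency half: the ``local'' amalgam estimate $\|I_\gamma g\|_{p_2(\cdot),q}\lesssim\|g\|_{p_1(\cdot),q}$ is not immediate from Lemma \ref{l4.3}, because $I_\gamma$ is nonlocal and one must carefully absorb the contributions of $g\chi_{Q_{1,k}}$ to the output on a far cube $Q_{1,m}$; this requires the kernel bound $|x-y|^{-(n-\gamma)}\sim |k-m|^{-(n-\gamma)}$ for $x\in Q_{1,m}$, $y\in Q_{1,k}$ with $|k-m|$ large, followed by a discrete Young/Minkowski inequality in $\ell^q$, whose convergence is exactly where $\gamma<n$ and the constraint $\alpha<q$ enter. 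A secondary subtlety is making sure all the norm equivalences (Proposition \ref{p3.1}, Lemma \ref{l3.2}) are applied with constants independent of $r$, so that the supremum over $r$ survives; I would address this by fixing $r=1$ first via the intertwining identity and only then invoking the $r$-uniform equivalences.
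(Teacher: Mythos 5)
Your sufficiency argument has a genuine gap at its central step. The pointwise intertwining identity $I_\gamma(St_r^{(\alpha)}f)=r^{\gamma-\frac{n}{\alpha}+\frac{n}{\beta}}St_r^{(\beta)}(I_\gamma f)$ is correct, but the reduction to a single scale $r=1$ is not legitimate in the variable-exponent setting: the spaces $L^{p(\cdot)}$ are not dilation invariant. Writing out $\|St_r^{(\beta)}(I_\gamma f)\|_{p_2(\cdot),q}$ and changing variables produces norms with the rescaled exponent $p_2(r\cdot)$, and the log-H\"older conditions (1)--(2) of Lemma \ref{l1.1} are not preserved with constants uniform in $r$ (for large $r$ the local condition degenerates). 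Hence the constant in Lemma \ref{l4.3}, and in the ``local'' amalgam estimate you want to prove once at scale $1$, cannot be transferred to all scales; this is precisely why the paper's norms carry the correction exponents $N_{r,p}$ and $C_p$ and why Proposition \ref{p3.2}(iv) cannot be used as an exact isometry here. The paper avoids dilation entirely: for each fixed $r$ and each ball $B=B(x,r)$ it splits $f=f\chi_{2B}+f\chi_{(2B)^c}$, applies the fixed-exponent boundedness $L^{p_1(\cdot)}\to L^{p_2(\cdot)}$ of Lemma \ref{l4.3} to the local piece, estimates the tail by dyadic annuli plus the generalized H\"older inequality and Lemmas \ref{l1.3}--\ref{l1.4}, sums the resulting geometric series using $\frac{1}{\beta}-\frac{1}{q}>0$, and only then takes the $L^q$-norm (Minkowski) and the supremum over $r$. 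You would need to carry out this per-scale argument; the discrete Young inequality you sketch handles only the unit scale.

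For the necessity, your ball-testing scheme also has a gap: Proposition \ref{p3.4} provides only upper bounds $\lesssim r_0^{n/\alpha+C_p}$ for $\|\chi_{B(x_0,r_0)}\|_{(L^{p(\cdot)},L^q)^\alpha}$, whereas your argument needs a matching lower bound for $\|\chi_{B(0,cr_0)}\|_{(L^{p_2(\cdot)},L^q)^\beta}$, which is not established anywhere in the paper; moreover the correction exponents $C_{p_1},C_{p_2}$ do not obviously cancel in the limits $r_0\to 0$ and $r_0\to\infty$. The paper instead proves necessity in Remark \ref{r4.1} by the homogeneity of the norms under $\delta_t$ combined with $I_\gamma(\delta_tf)=t^{-\gamma}\delta_tI_\gamma(f)$, forcing the exponent of $t$ to vanish. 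Your testing idea could be repaired, but as written it rests on estimates the paper does not supply.
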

\begin{remark}\label{r4.1}
In fact, the condition $\gamma=\frac{n}{\alpha}-\frac{n}{\beta}$ is necessary for the boundedness of fractional integral operators $I_\gamma$.
Let $\delta_tf(x)=f(tx)$, where $(t>0)$. Then,
$$I_{\gamma}(\delta_t f)=t^{-\gamma}\delta_t I_{\gamma}(f).$$
$$\|\delta_{t^{-1}}f\|_{_{(L^{p_2(\cdot)},L^{q})^{\beta}}} =t^{\frac{n}{\beta}-\frac{1}{q}}\|f\|_{_{(L^{p_2(\cdot)},L^{q})^{\beta}}}.$$
$$\|\delta_{t}f\|_{_{(L^{p_1(\cdot)},L^{s})^{\alpha}}} =t^{-\frac{n}{\alpha}+\frac{1}{s}}\|f\|_{_{(L^{p_1(\cdot)},L^{s})^{\alpha}}}.$$
Thus, by the boundedness of $I_\gamma$ from $(L^{p_1(\cdot)},L^{s})^{\alpha}(\R^n)$ to $(L^{p_2(\cdot)},L^{q})^{\beta}(\R^n)$.\par
\begin{align*}
\|I_{\gamma}f\|_{(L^{p_2(\cdot)},L^{q})^{\beta}}
&=t^{\gamma}\|\delta_{t^{-1}}I_{\gamma}(\delta_tf)\|_{(L^{p_2(\cdot)},L^{q})^{\beta}}\\
&=t^{\gamma+\frac{n}{\beta}-\frac{1}{q}}\|I_{\gamma}(\delta_tf)\|_{(L^{p_2(\cdot)},L^{q})^{\beta}}\\
&\lesssim t^{\gamma+\frac{n}{\beta}-\frac{1}{q}}\|\delta_tf\|_{(L^{p_1(\cdot)},L^{q})^{\alpha}}\\
&=t^{\gamma+\frac{n}{\beta}-\frac{1}{q}-\frac{n}{\alpha}+\frac{1}{q}}
\|f\|_{(L^{p_1(\cdot)},L^{s})^{\alpha}}.
\end{align*}\par
Thus, $\gamma=\frac{n}{\alpha}-\frac{n}{\beta}$.
\end{remark}

\begin{proof}[Proof of Theorem \ref{t4.1}]
By Remark \ref{r4.1}, we only need to prove the boundedness of $I_\gamma$ on variable Fofana's spaces if $\gamma=\frac{n}{\alpha}-\frac{n}{\beta}$.
Let $p_{1}(\cdot), p_{2}(\cdot)\in\mathcal{P}(\R^n)$, $p_1(\cdot)<\alpha<q<\infty$, $p_2(\cdot)<\beta<q<\infty$, $0<\gamma<n/{p_1^+}$, $\frac{n}{p_1(\cdot)}-\frac{n}{p_2(\cdot)}=\gamma, \frac{n}{\alpha}-\frac{n}{\beta}=\gamma$ and $f\in (L^{p_1(\cdot)},L^q)^\alpha (\mathbb{R}^n)$
Fix $x\in\mathbb{R}^n$, $r>0$ and set $B=B(x, r)$, $2B= B(x,2r)$. Let $f_1=f\chi_{2B}$ and $f_2=f-f_1$. We write
 \begin{align*}
 |B|^{\frac{1}{\beta}-\frac{1}{p_2(\cdot)}-\frac{1}{q}}\|I_{\gamma} f\chi_{B} \|_{L^{p_2(\cdot)}(\mathbb{R}^n)}
 &\leq|B|^{\frac{1}{\beta}-\frac{1}{p_2(\cdot)}-\frac{1}{q}}\|I_{\gamma} f_1\chi_{B} \|_{L^{p_2(\cdot)}(\mathbb{R}^n)}
 +|B|^{\frac{1}{\beta}-\frac{1}{p_2(\cdot)}-\frac{1}{q}}\|I_{\gamma} f_2\chi_{B} \|_{L^{p_2(\cdot)}(\mathbb{R}^n)}\\
 &=:I_1+I_2.
\end{align*}\par
We first estimate $ I_1$. Since the fractional integral operators $I_{\gamma}$ are bounded from $L^{p_1(\cdot)}(\mathbb{R}^n)$ to $L^{p_2(\cdot)}(\mathbb{R}^n)$ and $\frac{n}{p_1(\cdot)}-\frac{n}{p_2(\cdot)}=\gamma, \frac{n}{\alpha}-\frac{n}{\beta}=\gamma$, we have $\frac{1}{\beta}-\frac{1}{p_2(\cdot)}-\frac{1}{q}=\frac{1}{\alpha}-\frac{1}{p_1(\cdot)}-\frac{1}{q}$ and  $-1<\frac{1}{\alpha}-\frac{1}{{p_1}_-}-\frac{1}{q}<\frac{1}{\alpha}-\frac{1}{p_1(\cdot)}-\frac{1}{q}<0$. Hence it suffices to note that
\begin{align*}
I_1&\leq|B|^{\frac{1}{\beta}-\frac{1}{p_2(\cdot)}-\frac{1}{q}}\|I_{\gamma} f_1\|_{L^{p_2(\cdot)}(\mathbb{R}^n)}\\
   &\lesssim|B|^{\frac{1}{\beta}-\frac{1}{p_2(\cdot)}-\frac{1}{q}}\|f\chi_{2B}\|_{L^{p_1(\cdot)}(\mathbb{R}^n)}\\
   &\lesssim|2B|^{\frac{1}{\alpha}-\frac{1}{p_1(\cdot)}-\frac{1}{q}}\|f\chi_{2B}\|_{L^{p_1(\cdot)}(\mathbb{R}^n)}
     \bigg(\frac{|B|}{|2B|}\bigg)^{\frac{1}{\alpha}-\frac{1}{{p_1}_-}-\frac{1}{q}}\\
   &\lesssim|2B|^{\frac{1}{\alpha}-\frac{1}{p_1(\cdot)}-\frac{1}{q}}\|f\chi_{2B}\|_{L^{p_1(\cdot)}(\mathbb{R}^n)}.
\end{align*}\par
Next, begin to estimate $I_2$. It is obvious that $|y - z| \approx |x - z|$, when $y \in B(x,r)$ and $z \in B(x,2r)^c$. Decompose $\mathbb{R}^n $ into a geometrically increasing sequence of concentric balls, using the generalized H{\"{o}}lder inequality, we have
\begin{align*}
|I_{\gamma}f_2(y)|&\lesssim\int_{(2B)^c}{\frac{|f(z)|}{|y-z|^{n-\gamma}}}dz\\
                          &\lesssim\sum_{j=1}^{\infty}\int_{2^{j+1}B\backslash 2^jB}{\frac{|f(z)|}{|x-z|^{n-\gamma}}}dz\\
                          &\lesssim\sum_{j=1}^{\infty}|2^{j+1}B|^{\frac{\gamma}{n}-1}\int_{2^{j+1}B}|f(z)|dz\\
                          &\lesssim\sum_{j=1}^{\infty}|2^{j+1}B|^{\frac{\gamma}{n}-1}\|f\chi_{2^{j+1}B}\|_{L^{p_1(\cdot)}(\mathbb{R}^n)}
                          \|\chi_{2^{j+1}B}\|_{L^{p_1'(\cdot)}(\mathbb{R}^n)}.
\end{align*}\par

From above estimate, by Lemma \ref{l1.3}, $\frac{1}{\beta}-\frac{1}{q}>0$ and $\frac{n}{p_1(\cdot)}-\frac{n}{p_2(\cdot)}=\gamma, \frac{n}{\alpha}-\frac{n}{\beta}=\gamma$, we have $\frac{1}{\beta}-\frac{1}{p_2(\cdot)}-\frac{1}{q}=\frac{1}{\alpha}-\frac{1}{p_1(\cdot)}-\frac{1}{q}$, it follows that
\begin{align*}
I_2&\lesssim|B|^{\frac{1}{\beta}-\frac{1}{p_2(\cdot)}-\frac{1}{q}}\sum_{j=1}^{\infty}|2^{j+1}B|^{\frac{\gamma}{n}-1}
         \|f\chi_{2^{j+1}B}\|_{L^{p_1(\cdot)}(\mathbb{R}^n)}
         \|\chi_{2^{j+1}B}\|_{L^{p_1'(\cdot)}(\mathbb{R}^n)}\|\chi_{B}\|_{L^{p_2(\cdot)}(\mathbb{R}^n)}\\
   &\lesssim\sum_{j=1}^{\infty}|2^{j+1}B|^{\frac{1}{\alpha}-\frac{1}{p_1(\cdot)}-\frac{1}{q}}
         \|f\chi_{2^{j+1}B}\|_{L^{p_1(\cdot)}(\mathbb{R}^n)}
         \frac{\|\chi_B\|_{L^{p_2(\cdot)}(\mathbb{R}^n)}}{\|\chi_{2^{j+1}B}\|_{L^{p_2(\cdot)}(\mathbb{R}^n)}}
         \frac{|B|^{\frac{1}{\beta}-\frac{1}{p_2(\cdot)}-\frac{1}{q}}}{|2^{j+1}B|^{\frac{1}{\alpha}-\frac{1}{p_1(\cdot)}-\frac{1}{q}}}\\
    &\lesssim\sum_{j=1}^{\infty}|2^{j+1}B|^{\frac{1}{\alpha}-\frac{1}{p_1(\cdot)}-\frac{1}{q}}
         \|f\chi_{2^{j+1}B}\|_{L^{p_1(\cdot)}(\mathbb{R}^n)}
        \frac{\|\chi_B\|_{L^{p_2(\cdot)}(\mathbb{R}^n)}}{\|\chi_{2^{j+1}B}\|_{L^{p_2(\cdot)}(\mathbb{R}^n)}}
        \frac{\|\chi_{2^{j+1}B}\|_{L^{p_2(\cdot)}(\mathbb{R}^n)}}{\|\chi_B\|_{L^{p_2(\cdot)}(\mathbb{R}^n)}}
         \times\bigg(\frac{|B|}{|2^{j+1}B|}\bigg)^{\frac{1}{\beta}-\frac{1}{q}}\\
   &\lesssim\sum_{j=1}^{\infty}|2^{j+1}B|^{\frac{1}{\alpha}-\frac{1}{p_1(\cdot)}-\frac{1}{q}}
         \|f\chi_{2^{j+1}B}\|_{L^{p_1(\cdot)}(\mathbb{R}^n)}
         \times\bigg(\frac{1}{2^{(j+1)n}}\bigg)^{\frac{1}{\beta}-\frac{1}{q}}.
\end{align*}\par

Taking the $L^q$-norm and using Minkowski's inequality, we get

\begin{align*}
&\left\||B|^{\frac{1}{\beta}-\frac{1}{p_2(\cdot)}-\frac{1}{q}}\|I_\gamma f\chi_{B}\|_{L^{p_2(\cdot)}(\mathbb{R}^n)}\right\|_{L^q(\mathbb{R}^n)}\\
&\leq \|I_1\|_{L^q(\mathbb{R}^n)}+\|I_2\|_{L^q(\mathbb{R}^n)}\\
&\lesssim\bigg(\left\||2B|^{\frac{1}{\alpha}-\frac{1}{p_1(\cdot)}-\frac{1}{q}}\|f\chi_{2B}\|_{L^{p_1(\cdot)}(\mathbb{R}^n)}\right\|_{L^q(\mathbb{R}^n)}\\
&~~~~+\sum_{j=1}^{\infty}\left\||2^{j+1}B|^{\frac{1}{\alpha}-\frac{1}{p_1(\cdot)}-\frac{1}{q}}\|f\chi_{2^{j+1}B}\|_{L^{p_1(\cdot)}(\mathbb{R}^n)}\right\|_{L^q(\mathbb{R}^n)}\bigg(\frac{1}{2^{(j+1)n}}\bigg)^{\frac{1}{\beta}-\frac{1}{q}}\bigg).
\end{align*}\par
Thus, by taking the supremum over all $ r > 0$, the proof is completed.
\end{proof}

\begin{theorem}\label{t4.2}
Let $p_1(\cdot), p_2(\cdot)\in \mathcal{P}(\R^n)$ satisfies conditions (1) and (2) in Lemma1.1, $0<\gamma<n/{p_1^+}$, $1\leq q, \alpha\leq\infty$ and $p_1(\cdot)<\alpha<q<\infty$, $p_2(\cdot)<\beta<q<\infty$. Assume that $\frac{n}{p_1(\cdot)}-\frac{n}{p_2(\cdot)}=\gamma$ and $\frac{n}{\alpha}-\frac{n}{\beta}=\gamma$, then the following statements are equivalent:
\begin{itemize}
\item[(\romannumeral1)] $b\in BMO(\R^n)$;
\item[(\romannumeral2)] The linear commutators $[b,I_\gamma]$ are bounded from $(L^{p_1(\cdot)},L^{q})^{\alpha}(\R^n)$ to $(L^{p_2(\cdot)},L^{q})^{\beta}(\R^n)$.
\end{itemize}
\end{theorem}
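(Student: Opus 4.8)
The plan is to prove the two implications separately. The direction $(\romannumeral1)\Rightarrow(\romannumeral2)$ is the main analytic estimate and follows the decomposition scheme already used for Theorem \ref{t4.1}, now carrying an extra $BMO$ factor. Fix $x\in\R^n$, $r>0$, write $B=B(x,r)$, $f_1=f\chi_{2B}$, $f_2=f-f_1$, and split
$$|B|^{\frac{1}{\beta}-\frac{1}{p_2(\cdot)}-\frac{1}{q}}\|[b,I_\gamma]f\chi_B\|_{L^{p_2(\cdot)}}\le|B|^{\frac{1}{\beta}-\frac{1}{p_2(\cdot)}-\frac{1}{q}}\|[b,I_\gamma]f_1\chi_B\|_{L^{p_2(\cdot)}}+|B|^{\frac{1}{\beta}-\frac{1}{p_2(\cdot)}-\frac{1}{q}}\|[b,I_\gamma]f_2\chi_B\|_{L^{p_2(\cdot)}}=:J_1+J_2.$$
For $J_1$ one applies the local boundedness of $[b,I_\gamma]$ from $L^{p_1(\cdot)}$ to $L^{p_2(\cdot)}$ (Lemma \ref{l4.4}), which produces $\|b\|_*\|f\chi_{2B}\|_{L^{p_1(\cdot)}}$, and then one absorbs the radius powers exactly as in the estimate of $I_1$ in Theorem \ref{t4.1}, using $\frac{1}{\beta}-\frac{1}{p_2(\cdot)}-\frac{1}{q}=\frac{1}{\alpha}-\frac{1}{p_1(\cdot)}-\frac{1}{q}$ and that this exponent is negative so that passing from $|B|$ to $|2B|$ only costs a harmless constant.

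For $J_2$, write $[b,I_\gamma]f_2(y)=(b(y)-b_{B})I_\gamma f_2(y)-I_\gamma((b-b_{B})f_2)(y)$. The first piece is handled by bounding $\|(b-b_B)\chi_B\|_{L^{p_2(\cdot)}}\lesssim\|b\|_*\|\chi_B\|_{L^{p_2(\cdot)}}$ via Lemma \ref{l4.2} and then estimating $|I_\gamma f_2(y)|$ pointwise by the geometric sum $\sum_{j\ge1}|2^{j+1}B|^{\gamma/n-1}\|f\chi_{2^{j+1}B}\|_{L^{p_1(\cdot)}}\|\chi_{2^{j+1}B}\|_{L^{p_1'(\cdot)}}$ as in Theorem \ref{t4.1}. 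For the second piece one estimates, for $y\in B$,
$$|I_\gamma((b-b_B)f_2)(y)|\lesssim\sum_{j=1}^{\infty}|2^{j+1}B|^{\frac{\gamma}{n}-1}\int_{2^{j+1}B}|b(z)-b_B||f(z)|\,dz\lesssim\sum_{j=1}^{\infty}|2^{j+1}B|^{\frac{\gamma}{n}-1}\|(b-b_B)\chi_{2^{j+1}B}\|_{L^{p_1'(\cdot)}}\|f\chi_{2^{j+1}B}\|_{L^{p_1(\cdot)}},$$
and by Lemma \ref{l4.2} the factor $\|(b-b_B)\chi_{2^{j+1}B}\|_{L^{p_1'(\cdot)}}\lesssim(j+1)\|b\|_*\|\chi_{2^{j+1}B}\|_{L^{p_1'(\cdot)}}$. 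Collecting terms, using Lemma \ref{l1.4} to convert $\|\chi_{2^{j+1}B}\|_{L^{p_1'(\cdot)}}$ into $|2^{j+1}B|/\|\chi_{2^{j+1}B}\|_{L^{p_1(\cdot)}}$ and Lemma \ref{l1.3} to compare $\|\chi_B\|_{L^{p_2(\cdot)}}$ with $\|\chi_{2^{j+1}B}\|_{L^{p_2(\cdot)}}$, and invoking $\frac{1}{\beta}-\frac{1}{q}>0$, one gets
$$J_2\lesssim\|b\|_*\sum_{j=1}^{\infty}(j+1)\,2^{-(j+1)n(\frac{1}{\beta}-\frac{1}{q})}\,|2^{j+1}B|^{\frac{1}{\alpha}-\frac{1}{p_1(\cdot)}-\frac{1}{q}}\|f\chi_{2^{j+1}B}\|_{L^{p_1(\cdot)}}.$$
Taking the $L^q$-norm in $x$, applying Minkowski's inequality, using Lemma \ref{l3.1} (dilation of the ball radius) together with Proposition \ref{p3.1} to recognize each term as $\lesssim\|f\|_{(L^{p_1(\cdot)},L^q)^\alpha}$, and noting that $\sum_j(j+1)2^{-(j+1)n(\frac{1}{\beta}-\frac{1}{q})}<\infty$, one concludes $\|[b,I_\gamma]f\|_{(L^{p_2(\cdot)},L^q)^\beta}\lesssim\|b\|_*\|f\|_{(L^{p_1(\cdot)},L^q)^\alpha}$ after taking the supremum over $r>0$.

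For the converse $(\romannumeral2)\Rightarrow(\romannumeral1)$, which is the genuinely new contribution and where the pre-dual is used, the plan is to test the boundedness against characteristic functions. Fix a ball $B_0=B(x_0,r_0)$. The standard idea (going back to Janson) is that on a suitable ball $b(x)-b_{B_0}$ does not change sign after multiplying by a unimodular function, or more robustly one uses the lower bound in Lemma \ref{l4.2}: there is a ball $B$ with $\|(b-b_B)\chi_B\|_{L^{p_2(\cdot)}}\gtrsim\|b\|_*\|\chi_B\|_{L^{p_2(\cdot)}}$. One then chooses $f=\chi_{\tilde B}$ for an appropriate dilate $\tilde B$ of $B$ so that $I_\gamma f(x)\gtrsim|B|^{\gamma/n}$ on $B$, whence $|[b,I_\gamma]f(x)|=|b(x)I_\gamma f(x)-I_\gamma(bf)(x)|\gtrsim|B|^{\gamma/n}|b(x)-c_B|$ on $B$ for a suitable constant $c_B$. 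Applying the assumed boundedness,
$$|B|^{\gamma/n}\,\big\||B(\cdot,r)|^{\frac{1}{\beta}-\frac{1}{p_2(\cdot)}-\frac{1}{q}}\|(b-c_B)\chi_B\chi_{B(\cdot,r)}\|_{L^{p_2(\cdot)}}\big\|_{L^q}\lesssim\|[b,I_\gamma]f\|_{(L^{p_2(\cdot)},L^q)^\beta}\lesssim\|\chi_{\tilde B}\|_{(L^{p_1(\cdot)},L^q)^\alpha}.$$
Evaluating the left side with $r\sim r_0$ and using Lemma \ref{l1.3}, the right side with Proposition \ref{p3.4} (so $\|\chi_{\tilde B}\|_{(L^{p_1(\cdot)},L^q)^\alpha}\lesssim r_0^{n/\alpha+C_p}$), and the homogeneity relation $\gamma=\frac{n}{\alpha}-\frac{n}{\beta}$, all powers of $r_0$ cancel and one is left with $\|b\|_*\lesssim$ the operator norm, hence $b\in BMO(\R^n)$.

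I expect the main obstacle to be the converse direction: one must choose the test ball and the normalizing constant $c_B$ carefully so that $|b-c_B|$ is genuinely bounded below in $L^{p_2(\cdot)}$-norm (the delicate point is that $L^{p_2(\cdot)}$ has no simple duality with $L^{p_2'(\cdot)}$ pointwise, so one leans on Lemma \ref{l4.2} rather than an elementary argument), and then bookkeep the exponents of $r_0$ so that the scaling identity $\gamma=\frac{n}{\alpha}-\frac{n}{\beta}$ makes everything scale-invariant; the factor $C_p$ in Proposition \ref{p3.4} must be shown not to spoil this, which is why one takes $r\sim r_0$ with both comparable to a fixed scale. The forward direction is essentially a variation of Theorem \ref{t4.1} and its only new feature is keeping track of the linear-in-$j$ growth from Lemma \ref{l4.2}, which is absorbed by the geometric decay $2^{-(j+1)n(\frac{1}{\beta}-\frac{1}{q})}$.
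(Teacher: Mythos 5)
Your forward direction $(\mathrm{i})\Rightarrow(\mathrm{ii})$ coincides with the paper's argument: the same splitting into $J_1$ and $J_2$, the local boundedness of $[b,I_\gamma]$ from Lemma \ref{l4.4} for $J_1$, the annular decomposition with Lemmas \ref{l4.1}, \ref{l4.2}, \ref{l1.3}, \ref{l1.4} for $J_2$, and the convergent factor $(j+1)2^{-(j+1)n(\frac{1}{\beta}-\frac{1}{q})}$. No issue there.

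The converse direction as you sketch it has a genuine gap. You claim that with $f=\chi_{\tilde B}$ one gets $|[b,I_\gamma]f(x)|\gtrsim|B|^{\gamma/n}|b(x)-c_B|$ on $B$. But
$$[b,I_\gamma]\chi_{\tilde B}(x)=\int_{\tilde B}\frac{b(x)-b(y)}{|x-y|^{n-\gamma}}\,dy,$$
and the integrand changes sign in $y$ while the kernel $|x-y|^{\gamma-n}$ is not constant on $\tilde B$; the weighted average of $b(x)-b(y)$ against this kernel is therefore not comparable to the unweighted average $b(x)-b_{\tilde B}$ (either one can vanish while the other does not). This sign cancellation is exactly the obstruction that makes the necessity direction for linear commutators nontrivial, and it is not repaired by a careful choice of $c_B$ or of the test ball. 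The paper circumvents it with Janson's device: since $|x-y|^{n-\gamma}$ is smooth and nonvanishing for $(y-x)/t$ in a ball $B(z_0,2)$ away from the origin, one expands $(|x-y|/t)^{n-\gamma}$ as an absolutely convergent Fourier series, which turns $\frac{1}{|B|}\int_B|b(x)-b_{B_{z_0}}|\,dx$ into a sum over $m$ of pairings of $[b,I_\gamma](e^{-2im\cdot\frac{\cdot}{t}}\chi_{B_{z_0}})$ against bounded multiples of $\chi_B$; each pairing is then controlled by the duality inequality (\ref{8}) of Theorem \ref{t3.1} together with the norm bounds $\|\chi_{B_{z_0}}\|_{(L^{p_1(\cdot)},L^q)^\alpha}\lesssim t^{n/\alpha+C_p}$ and $\|s\,e^{-2im\cdot\frac{\cdot}{t}}\chi_B\|_{\mathcal{H}(p_2(\cdot)',q',\beta')}\lesssim t^{n/\beta'}$ from Proposition \ref{p3.4}, and the powers of $t$ cancel by $\gamma=\frac{n}{\alpha}-\frac{n}{\beta}$. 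If you want to avoid Fourier series you would need a substitute that genuinely removes the cancellation, e.g.\ restricting the test function to the subset of $\tilde B$ where $b-c_B$ has a fixed sign relative to a median (the Lerner--Ombrosi--Rivera-R\'ios scheme); as written, your lower bound is false and the argument does not close.
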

\begin{proof}[Proof of Theorem \ref{t4.2}]
Let $p_{1}(\cdot), p_{2}(\cdot)\in\mathcal{P}(\R^n)$, $p_1(\cdot)<\alpha<q<\infty, p_2(\cdot)<\beta<q<\infty$, $0<\gamma<n/{p_1^+}$, $\frac{n}{p_1(\cdot)}-\frac{n}{p_2(\cdot)}=\gamma, \frac{n}{\alpha}-\frac{n}{\beta}=\gamma$, $f\in (L^{p_1(\cdot)},L^q)^\alpha (\mathbb{R}^n)$ and $b\in BMO(\R^n)$.
Fix $x\in\mathbb{R}^n$, $r>0$ and set $B=B(x, r)$, $2B= B(x,2r)$. Let $f_1=f\chi_{2B}$ and $f_2=f-f_1$. We write
 \begin{align*}
 |B|^{\frac{1}{\beta}-\frac{1}{p_2(\cdot)}-\frac{1}{q}}\|[b,I_\gamma] f\chi_{B} \|_{L^{p(\cdot)}(\mathbb{R}^n)}
 &\leq|B|^{\frac{1}{\beta}-\frac{1}{p_2(\cdot)}-\frac{1}{q}}\|[b,I_\gamma] f_1\chi_{B} \|_{L^{p_2(\cdot)}(\mathbb{R}^n)}\\
 & ~~~~+|B|^{\frac{1}{\beta}-\frac{1}{p_2(\cdot)}-\frac{1}{q}}\|[b,I_\gamma] f_2\chi_{B} \|_{L^{p_2(\cdot)}(\mathbb{R}^n)}\\
 &=:J_1+J_2.
\end{align*}\par

We first estimate $J_1$. Since the commutators $[b,I_\gamma]$ are bounded from $L^{p_1(\cdot)}(\mathbb{R}^n)$ to $L^{p_2(\cdot)}(\mathbb{R}^n)$ and $\frac{n}{p_1(\cdot)}-\frac{n}{p_2(\cdot)}=\gamma, \frac{n}{\alpha}-\frac{n}{\beta}=\gamma$, we have $\frac{1}{\beta}-\frac{1}{p_2(\cdot)}-\frac{1}{q}=\frac{1}{\alpha}-\frac{1}{p_1(\cdot)}-\frac{1}{q}$ and $-1<\frac{1}{\alpha}-\frac{1}{{p_1}_-}-\frac{1}{q}<\frac{1}{\alpha}-\frac{1}{p_1(\cdot)}-\frac{1}{q}<0$, it implies that
\begin{align*}
J_1&\leq|B|^{\frac{1}{\beta}-\frac{1}{p_2(\cdot)}-\frac{1}{q}}\|[b,I_\gamma] f_1\|_{L^{p_2(\cdot)}(\mathbb{R}^n)}\\
   &\lesssim|b\|_*|B|^{\frac{1}{\beta}-\frac{1}{p_2(\cdot)}-\frac{1}{q}}\| f\chi_{2B} \|_{L^{p_1(\cdot)}(\mathbb{R}^n)}\\
   &\lesssim\|b\|_*|2B|^{\frac{1}{\alpha}-\frac{1}{p_1(\cdot)}-\frac{1}{q}}\| f\chi_{2B} \|_{L^{p_1(\cdot)}(\mathbb{R}^n)}
    \times\bigg(\frac{|B|}{|2B|}\bigg)^{\frac{1}{\alpha}-\frac{1}{{p_1}_-}-\frac{1}{q}}\\
   &\lesssim\|b\|_*|2B|^{\frac{1}{\alpha}-\frac{1}{p_1(\cdot)}-\frac{1}{q}}\|f\chi_{2B} \|_{L^{p(\cdot)}(\mathbb{R}^n)}.
\end{align*}\par
Next we estimate $ I_2$.
\begin{align*}
|[b,I_\gamma]f_2(x)|&=\big|(b(y)-b_B)I_\gamma f_2(y)+I_\gamma \big((b-b_B)f_2(y)\big)\big|\\
                    &\leq\big|(b(y)-b_B)I_\gamma f_2(y)\big|+\big|I_\gamma \big((b-b_B)f_2(y)\big)\big|.
\end{align*}\par
It is obvious that$ |y - z| \approx |x - z|$, when $y \in B(x,r)$ and $z \in B(x,2r)^c$. Decompose $\mathbb{R}^n $ into a geometrically increasing sequence of concentric balls, using the generalized H{\"{o}}lder inequality, we deduce that
\begin{align*}
|(b(y)-b_B)I_\gamma f_2(y)|
                           &\lesssim|(b(y)-b_B)|\int_{2B^c}\frac{|f(z)|}{|x-z|^{n-\gamma}}dz\\
                           &\lesssim|(b(y)-b_B)|\sum_{j=1}^{\infty}|2^{j+1}B|^{\frac{\gamma}{n}-1}\int_{2^{j+1}B\backslash 2^{j}B}|f(z)|dz\\
                           &\lesssim|(b(y)-b_B)|\sum_{j=1}^{\infty}|2^{j+1}B|^{\frac{\gamma}{n}-1}\|f\chi_{2^{j+1}B}\|_{L^{p_1(\cdot)}(\mathbb{R}^n)}
                           \|\chi_{2^{j+1}B}\|_{L^{p_1'(\cdot)}(\mathbb{R}^n)}.\\
|I_\gamma \left((b(y)-b_B)f_2\right)(y)|
                           &\lesssim\int_{2B^c}\frac{|f(z)||(b(y)-b_B)|}{|x-z|^{n-\gamma}}dz\\
                           &\lesssim\sum_{j=1}^{\infty}|2^{j+1}B|^{\frac{\gamma}{n}-1}\int_{2^{j+1}B\backslash 2^{j}B}|f(z)||(b(y)-b_B)|dz\\
                           &\lesssim\sum_{j=1}^{\infty}|2^{j+1}B|^{\frac{\gamma}{n}-1}\|f\chi_{2^{j+1}B}\|_{L^{p_1(\cdot)}(\mathbb{R}^n)}
                           \|(b(y)-b_B)\chi_{2^{j+1}B}\|_{L^{p_1'(\cdot)}(\mathbb{R}^n)}.
\end{align*}\par

From these estimates and using Lemma \ref{l4.1}, Lemma \ref{l4.2}, Lemma \ref{l1.3}, and $-1<\frac{1}{\beta}-\frac{1}{p_2(\cdot)}-\frac{1}{q}=\frac{1}{\alpha}-\frac{1}{p_1(\cdot)}-\frac{1}{q}<0$, $\frac{1}{\beta}-\frac{1}{q}>0$, it follows that\\
\begin{align*}
J_2&\lesssim|B|^{\frac{1}{\beta}-\frac{1}{p_2(\cdot)}-\frac{1}{q}}\sum_{j=1}^{\infty}|2^{j+1}B|^{\frac{\gamma}{n}-1}
        \|f\chi_{2^{j+1}B}\|_{L^{p_1(\cdot)}(\mathbb{R}^n)}\\
   &~~~~\times\bigg(\|(b-b_B)\chi_{B}\|_{L^{p_2(\cdot)}(\mathbb{R}^n)}\|\chi_{2^{j+1}B}\|_{L^{p_1'(\cdot)}(\mathbb{R}^n)}
        +\|(b-b_B)\chi_{2^{j+1}B}\|_{L^{p_1'(\cdot)}(\mathbb{R}^n)}\|\chi_{B}\|_{L^{p_2(\cdot)}(\mathbb{R}^n)}\bigg)\\
   &\lesssim|B|^{\frac{1}{\beta}-\frac{1}{p_2(\cdot)}-\frac{1}{q}}\sum_{j=1}^{\infty}|2^{j+1}B|^{\frac{\gamma}{n}-1}
        \|f\chi_{2^{j+1}B}\|_{L^{p_1(\cdot)}(\mathbb{R}^n)}\\
   &~~~~\times\bigg(\|b\|_*\|\chi_{B}\|_{L^{p_2(\cdot)}(\mathbb{R}^n)}\|\chi_{2^{j+1}B}\|_{L^{p_1'(\cdot)}(\mathbb{R}^n)}
         +\|(b-b_{2^{j+1}B})\chi_{2^{j+1}B}\|_{L^{p_1'(\cdot)}(\mathbb{R}^n)}\|\chi_{B}\|_{L^{p_2(\cdot)}(\mathbb{R}^n)}\\
   &~~~~~~~~+\|(b_B-b_{2^{j+1}B})\chi_{2^{j+1}B}\|_{L^{p_1'(\cdot)}(\mathbb{R}^n)}\|\chi_{B}\|_{L^{p_2(\cdot)}(\mathbb{R}^n)}\bigg)\\
   &\lesssim|B|^{\frac{1}{\beta}-\frac{1}{p_2(\cdot)}-\frac{1}{q}}\sum_{j=1}^{\infty}|2^{j+1}B|^{\frac{\gamma}{n}-1}
        \|f\chi_{2^{j+1}B}\|_{L^{p_1(\cdot)}(\mathbb{R}^n)}\\
   &~~~~\times\bigg(\|b\|_{*}\|\chi_{B}\|_{L^{p_2(\cdot)}(\mathbb{R}^n)}\|\chi_{2^{j+1}B}\|_{L^{p_1'(\cdot)}(\mathbb{R}^n)}
        +(j+1)\|b\|_*\|\chi_{2^{j+1}B}\|_{L^{p_1'(\cdot)}(\mathbb{R}^n)}\|\chi_{B}\|_{L^{p_2(\cdot)}(\mathbb{R}^n)}\bigg)\\
   &\lesssim\|b\|_{*}|B|^{\frac{1}{\beta}-\frac{1}{p_2(\cdot)}-\frac{1}{q}}\sum_{j=1}^{\infty}(j+1)|2^{j+1}B|^{\frac{\gamma}{n}-1}
        \|f\chi_{2^{j+1}B}\|_{L^{p_1(\cdot)}(\mathbb{R}^n)}
       \|\chi_{B}\|_{L^{p_2(\cdot)}(\mathbb{R}^n)}\|\chi_{2^{j+1}B}\|_{L^{p_1'(\cdot)}(\mathbb{R}^n)}\\
   &\lesssim\|b\|_{*}\sum_{j=1}^{\infty}(j+1)|2^{j+1}B|^{\frac{1}{\alpha}-\frac{1}{p_1(\cdot)}-\frac{1}{q}}
        \|f\chi_{2^{j+1}B}\|_{L^{p_1(\cdot)}(\mathbb{R}^n)}
        \frac{\|\chi_{B}\|_{L^{p_2(\cdot)}(\mathbb{R}^n)}}{\|\chi_{2^{j+1}B}\|_{L^{p_2(\cdot)}(\mathbb{R}^n)}}
        \times\frac{|B|^{\frac{1}{\beta}-\frac{1}{p_2(\cdot)}-\frac{1}{q}}}{|2^{j+1}B|^{\frac{1}{\alpha}-\frac{1}{p_1(\cdot)}-\frac{1}{q}}}\\
   &\lesssim\|b\|_{*}\sum_{j=1}^{\infty}(j+1)|2^{j+1}B|^{\frac{1}{\alpha}-\frac{1}{p_1(\cdot)}-\frac{1}{q}}
        \|f\chi_{2^{j+1}B}\|_{L^{p_1(\cdot)}(\mathbb{R}^n)}
        \times\bigg(\frac{|B|}{|2^{j+1}B|}\bigg)^{\frac{1}{\beta}-\frac{1}{q}}
\end{align*}
\begin{align*}
   &\lesssim\|b\|_{*}\sum_{j=1}^{\infty}|2^{j+1}B|^{\frac{1}{\alpha}-\frac{1}{p_1(\cdot)}-\frac{1}{q}}
        \|f\chi_{2^{j+1}B}\|_{L^{p_1(\cdot)}(\mathbb{R}^n)}
        \times\bigg(\frac{(j+1)}{2^{(j+1)n(\frac{1}{\beta}-\frac{1}{q})}}\bigg).
\end{align*}\par
Taking the $L^q$-norm and using Minkowski's inequality, we get
\begin{align*}
&\left\||B|^{\frac{1}{\beta}-\frac{1}{p_2(\cdot)}-\frac{1}{q}}\|[b,I_\gamma]f\chi_{B}\|_{L^{p_2(\cdot)}(\mathbb{R}^n)}\right\|_{L^q(\mathbb{R}^n)}\\
&\leq \|J_1\|_{L^q(\mathbb{R}^n)}+\|J_2\|_{L^q(\mathbb{R}^n)}\\
&\lesssim|b\|_*\bigg(\left\||2B|^{\frac{1}{\alpha}-\frac{1}{p_1(\cdot)}-\frac{1}{q}}\|f\chi_{2B}\|_{L^{p_1(\cdot)}(\mathbb{R}^n)}\right\|_{L^q(\mathbb{R}^n)}\\
&~~~~+\sum_{j=1}^{\infty}\left\||2^{j+1}B|^{\frac{1}{\alpha}-\frac{1}{p_1(\cdot)}-\frac{1}{q}}\|f\chi_{2^{j+1}B}\|_{L^{p_1(\cdot)}(\mathbb{R}^n)}\right\|_{L^q(\mathbb{R}^n)}\left(\frac{(j+1)}{2^{(j+1)n(\frac{1}{\beta}-\frac{1}{q})}}\right)\bigg).
\end{align*}\par
Thus, by taking the supremum over all $ r > 0$, the proof is completed.

Now, assume that $[b,I_\gamma]$ is bounded from $(L^{p_1(\cdot)},L^{q})^{\alpha}(\R^n)$ to $(L^{p_2(\cdot)},L^{q})^{\beta}(\R^n)$. We use the same method as Janson \cite{22}. Choose $0\neq z_0\in\mathbb{R}^n$ such that $0\notin B(z_0,2)$. Then for $x\in B(z_0,2)$, $|x|^{n-\gamma}\in C^{\infty}(B(z_0,2))$. Hence, $|x|^{n-\gamma}$ can be written as the absolutely convergent Fourier series:
$$|x|^{n-\gamma}\chi_{B(z_0,2)}(x)=\sum_{m\in \mathbb{Z}^n}a_me^{2im\cdot x}\chi_{B(z_0,2)}(x),$$
with $\sum_{m\in \mathbb{Z}^n}|a_m|<\infty$.

For any $x_0\in\mathbb{R}^n$ and $t>0$, let $B=B(x_0,t)$ and $B_{z_0}=B(x_0+z_0t,t)$. Let $s(x)=\overline{sgn(\int_{B_{z_0}}(b(x)-b(y))dy)}$. Then
\begin{align*}
\frac{1}{|B|}\int_B|b(x)-b_{B_{z_0}}|dx
=\frac{1}{|B|}\frac{1}{|B_{z_0}|}\int_B\int_{B_{z_0}}s(x)(b(x)-b(y))dydx.
\end{align*}\par
If $x\in B$ and $y\in B_{z_0}$, then $\frac{y-x}{t}\in B(z_0,2)$. Thereby,
\begin{align*}
&\quad\frac{1}{|B|}\int_B|b(x)-b_{B_{z_0}}|dx\\
&=t^{-n-\gamma}\int_B\int_{B_{z_0}}s(x)(b(x)-b(y))|x-y|^{\gamma-n}\left(\frac{|x-y|}{t}\right)^{n-\gamma}\chi_{B}(x)\chi_{B_{z_0}}(y) dydx\\
&=t^{-n-\gamma}\sum_{m\in\mathbb{Z}^n}a_m\int_B\int_{B_{z_0}}s(x)(b(x)-b(y))|x-y|^{\gamma-n}e^{-2im\cdot \frac{y}{t}}\chi_{B_{z_0}}(y)dy\times e^{2im\cdot \frac{x}{t}}\chi_{B}(x)dx\\
&\leq t^{-n-\gamma}\sum_{m\in\mathbb{Z}^n}|a_m|\int_B|[b,I_\gamma](e^{-2im\cdot \frac{\cdot}{t}}\chi_{B_{z_0}})(x)\times s(x)(e^{2im\cdot \frac{\cdot}{t}}\chi_{B})(x)|dx.
\end{align*}\par
By (\ref{7}) and Proposition \ref{p3.1},
$$\frac{1}{|B|}\int_B|b(x)-b_{B_{z_0}}|dx
\lesssim t^{-n-\gamma}\sum_{m\in\mathbb{Z}^n}|a_m|\left\|[b,I_\gamma](e^{-2im\cdot \frac{\cdot}{t}}\chi_{B_{z_0}})\right\|_{(L^{p_2(\cdot)},L^{q})^{\beta}} \left\|s\cdot e^{-2im\cdot \frac{\cdot}{t}}\chi_B\right\|_{\mathcal{H}(p_2(\cdot)',q',\beta')}.$$\par
It is easy to calculate
$$\left\|s\cdot e^{-2im\cdot \frac{\cdot}{t}}\chi_B\right\|_{\mathcal{H}(p_2(\cdot)',q',\beta')}\lesssim t^{\frac{n}{\beta'}}.$$\par
Hence,
$$\frac{1}{|B|}\int_B|b(x)-b_{B_{z_0}}|dx\lesssim t^{-n-\gamma+n/\beta'}\sum_{m\in\mathbb{Z}^n}|a_m|\left\|[b,I_\gamma](e^{-2im\cdot \frac{\cdot}{t}}\chi_{B_{z_0}})\right\|_{(L^{p_2(\cdot)},L^{q})^{\beta}}.$$\par
According to the hypothesis
\begin{align*}
\frac{1}{|B|}\int_B|b(x)-b_{B_{z_0}}|dx&\lesssim t^{-n-\gamma+n/\beta'}\left\|[b,I_\gamma]\right\|_{(L^{p_1(\cdot)},L^{q})^{\alpha}(\R^n)\rightarrow (L^{p_2(\cdot)},L^{q})^{\beta}(\R^n)}\\
&~~~~\times\sum_{m\in\mathbb{Z}^n}|a_m|\left\|e^{-2im\cdot \frac{\cdot}{t}}\chi_{B_{z_0}}\right\|_{(L^{p_1(\cdot)},L^{q})^\alpha}\\
&\le t^{-n-\gamma+n/\beta'+n/\alpha+C_p}\left\|[b,I_\gamma]\right\|_{(L^{p_1(\cdot)},L^{q})^{\alpha}(\R^n)\rightarrow (L^{p_2(\cdot)},L^{q})^{\beta}(\R^n)}\sum_{m\in\mathbb{Z}^n}|a_m|\\
&\lesssim \left\|[b,I_\gamma]\right\|_{(L^{p_1(\cdot)},L^{q})^{\alpha}(\R^n)\rightarrow (L^{p_2(\cdot)},L^{q})^{\beta}(\R^n)}.
\end{align*}\par

Thus, we have
$$\frac{1}{|B|}\int_B|b(x)-b(y)|dx\le\frac{2}{|B|}\int_B|b(x)-b_{B_{z_0}}|dx\lesssim \left\|[b,I_\gamma]\right\|_{(L^{p_1(\cdot)},L^{q})^{\alpha}(\R^n)\rightarrow(L^{p_2(\cdot)},L^{q})^{\beta}(\R^n)}<\infty.$$\par
This prove $b\in BMO(\mathbb{R}^n)$.\par
Thus, the proof is complete.
\end{proof}

\end{document}